
\documentclass[reqno]{amsart}
\usepackage{latexsym,amsmath,amsfonts,amscd,amssymb}

\usepackage{eepic}
\setlength{\oddsidemargin}{5pt} \setlength{\evensidemargin}{5pt}
\setlength{\textwidth}{440pt} \setlength{\textheight}{665pt}
\setlength{\topmargin}{-20pt}
\setlength{\parskip}{.15cm} \setlength{\baselineskip}{.5cm}

\theoremstyle{plain}
\newtheorem{theorem}{Theorem}[section]
\newtheorem{corollary}[theorem]{Corollary}
\newtheorem{lemma}[theorem]{Lemma}
\newtheorem{proposition}[theorem]{Proposition}
\theoremstyle{definition}
\newtheorem{definition}[theorem]{Definition}
\newtheorem{remark}[theorem]{Remark}
\numberwithin{equation}{section}

\newcommand{\flechad}[1]{
\mbox{$\begin{picture}(#1,6) \put(1,3){\vector(1,0){#1}}
\end{picture}\hspace{2pt}$}}


\newcommand{\s}{\sigma}

\newcommand{\cC}{\mathcal{C}}

\newcommand{\cN}{\mathcal{N}}

\newcommand{\cO}{\mathcal{O}}

\newcommand{\cS}{\mathcal{S}}

\newcommand{\RR}{\mathbb{R}}
\newcommand{\CC}{\mathbb{C}}
\newcommand{\HH}{\mathbb{H}}
\newcommand{\PP}{\mathbb{P}}
\newcommand{\ZZ}{\mathbb{Z}}

\newcommand{\inc}{\hookrightarrow}
\newcommand{\lto}{\longrightarrow}
\newcommand{\surj}{\twoheadrightarrow}

\newcommand{\x}{\times}
\newcommand{\ox}{\otimes}

\newcommand{\frM}{{\frak M}}
\newcommand{\coeff}{\mathop{\mathrm{coeff}}}
\newcommand{\Sym}{\mathrm{Sym}}
\newcommand{\Res}{\mathrm{Res}}
\newcommand{\GCD}{\mathrm{gcd}}
\newcommand{\Jac}{\mathrm{Jac}}
\DeclareMathOperator{\rk}{rk} 

\DeclareMathOperator{\Hom}{Hom} \DeclareMathOperator{\Ext}{Ext}
 \DeclareMathOperator{\Gr}{Gr}
\DeclareMathOperator{\GL}{GL}\DeclareMathOperator{\Aut}{Aut}

\newcommand{\scp}{{\s_c^+}}
\newcommand{\smp}{{\s_m^+}}
\newcommand{\scm}{{\s_c^-}}

\newcommand{\moduli}{\cN_\s}

\newcommand{\modulimenos}{\cN_{\s_c^-}}

\newcommand{\Smenos}{\cS_{\scm}}

\newcommand{\hn}{{\bar{n}_0}}


\title{Hodge polynomials of the moduli spaces of rank $3$ pairs}

\subjclass[2000]{Primary: 14F45. Secondary: 14D20, 14H60.}
\keywords{Moduli space, complex curve, stable triple, Hodge
polynomial.}

\author{Vicente Mu\~noz}
  \address{Instituto de Ciencias Matem\'aticas CSIC-UAM-UCM-UC3M \\
  Consejo Superior de Investigaciones Cient\'{\i}ficas \\ Serrano 113 bis
  \\ 28006 Madrid \\ Spain}
    \address{Facultad de Matem\'{a}ticas \\ Universidad Complutense
  de Madrid \\ Plaza de Ciencias 3
  \\ 28040 Madrid \\ Spain}
  \email{vicente.munoz@imaff.cfmac.csic.es}

\thanks{Partially supported through grant MEC
(Spain) MTM2004-07090-C03-01}

\begin{document}
\maketitle

\begin{abstract}
 Let $X$ be a smooth projective curve of genus $g\geq 2$ over the
 complex numbers. A holomorphic triple $(E_1,E_2,\phi)$ on $X$ consists of two
 holomorphic vector bundles $E_{1}$ and $E_{2}$ over $X$ and a
 holomorphic map $\phi \colon E_{2} \to E_{1}$.
 There is a concept of stability for triples which depends on a
 real parameter $\sigma$.
 In this paper, we determine the Hodge polynomials of the moduli
 spaces of $\sigma$-stable triples with $\rk(E_1)=3$, $\rk(E_2)=1$, using the theory of mixed Hodge
 structures.
 This gives in particular the Poincar\'{e} polynomials
 of these moduli spaces. 
 As a byproduct, we recover the Hodge
 polynomial of the moduli space of odd degree rank $3$ stable
 vector bundles.
\end{abstract}

\section{Introduction}
\label{sec:introduction}

Let $X$ be a smooth projective curve of genus $g\geq 2$ over the
field of complex numbers. A holomorphic triple $T =
(E_{1},E_{2},\phi)$ on $X$ 
consists of two holomorphic vector bundles $E_{1}$ and $E_{2}$
over $X$, of ranks $n_1$ and $n_2$ and degrees $d_1$ and $d_2$
respectively, and a holomorphic map $\phi \colon E_{2} \to E_{1}$.
We call $(n_1,n_2,d_1,d_2)$ the type of the triple. There is a
concept of stability for a triple which depends on the choice of a
parameter $\s \in \RR$. This gives a collection of moduli spaces
$\moduli$, which have been studied in
\cite{BGP,BGPG,GPGM,MOV1,MOV2}. The range of the parameter $\s$ is
an interval $I\subset \RR$ split by a finite number of
\textit{critical values} $\s_c$ in such a way that, when $\s$
moves without crossing a critical value, then $\moduli$ remains
unchanged, but when $\s$ crosses a critical value, $\moduli$
undergoes a transformation which we call \textit{flip}. The study
of this process allows to obtain geometrical information on all
the moduli spaces $\moduli$.

For a projective smooth variety $Z$, the Hodge polynomial is
defined as
 $$
 e(Z)(u,v)=\sum_{p,q} h^{p,q}(Z) u^pv^q\, ,
 $$
where $h^{p,q}(Z)=\dim H^{p,q}(Z)$ are the Hodge numbers of $Z$.
In particular, the Poincar\'{e} polynomial equals $P_t(Z)=e(Z)(t,t)$.
The theory of mixed Hodge structures introduced by Deligne
\cite{De} allows to extend the definition of Hodge polynomials to
any algebraic variety (non-smooth or non-complete). Using this, in
\cite{MOV1,MOV2} the Hodge polynomials of the moduli spaces of
triples when the ranks of $E_1$ and $E_2$ are at most $2$ were
found.

When the rank of $E_2$ is one, we have the so-called
\textit{pairs}, studied in \cite{Th,GP,MOV1}. The moduli spaces of
pairs are smooth projective varieties for non-critical values of
$\s$. In \cite{MOV1} it was computed the Hodge polynomials of the
moduli spaces of rank $2$ pairs, and this was applied to recover
the Hodge polynomial of the moduli space of rank $2$ odd degree
stable bundles.
Our expectation is that the current technique can be used to:

\begin{itemize}
 \item[(i)] find the Hodge polynomials of the moduli spaces of rank
$n$ pairs (and in particular, the Poincar\'{e} polynomials of such
moduli spaces).
 \item[(ii)] recover the Hodge polynomials of the moduli spaces of
 rank $n$ and degree $d$ stable bundles, for $n,d$ coprime. Recursive formulas
 were obtained by Earl and Kirwan \cite{EK}, but our method
 will probably yield more explicit recursive formulas, in
 the spirit of those in \cite{Zagier}. 
\end{itemize}

Here we do the following step, namely $n=3$. New features, which
will appear in the general case, are present now. However there
are further complications when $n>3$ which deserve more thorough
investigation. This has convinced us to work out the case $n=3$
and leave the general case for future research. Our main results
are:

\begin{theorem} \label{thm:main-1}
  Let $\cN_\s=\cN_\s(3,1,d_1,d_2)$ be the moduli space
  of $\s$-stable triples of type $(3,1,d_1,d_2)$.
  Assume that $\s\in I$ is non-critical.
  Then the Hodge polynomial is
  $$
  \begin{aligned}
   e(\cN_\s) =&
   (1+u)^{2g}(1+v)^{2g}\coeff_{x^0}\frac{(1+ux)^{g}(1+vx)^{g}}{(1-x)(1-uvx)x^{d_1-d_2}}
   \cdot
   \\
    & \cdot \Bigg[
   \left(\frac{(uv)^{2d_1-2d_2-2 n_0} x^{n_0}}{1-(uv)^{-2}x} -
    \frac{(uv)^{2g-2-2d_1+3n_0} x^{n_0}}{1-(uv)^{3}x}
    \right)
   \cdot  \frac{(1+u^2v)^{g}(1+uv^2)^g  -(uv)^{g}(1+u)^{g}(1+v)^{g}}
    {(1-uv)^2(1-(uv)^2)} \\ &  +
   \frac{(uv)^{g-1}(1+u)^{g}(1+v)^{g}}{(1-uv)^2(1+uv)}
    \Bigg(
  \frac{(uv)^{2d_1-2d_2-2\hn+1} x^\hn}{(1-(uv)^{-2}x)(1-(uv)^{-1}x)}  \\
    &
    \qquad + \frac{(uv)^{2g-2-2d_1+3\hn} x^{\hn} }{(1-(uv)^{3}x)(1-(uv)^2x)}
    - \frac{(1+uv)(uv)^{g-1-d_2+\hn/2} x^{\hn}}{(1-(uv)^2x)(1-(uv)^{-1}x)}
    \Bigg) \Bigg] \, ,
  \end{aligned}
   $$
  where $n_0=\left[\frac{\s+d_1+d_2}{2}\right]$ and
  $\hn=2[\frac{n_0+1}{2}]$ (where $[x]$ denotes the integer part of $x\in\RR$).

  In particular, the Poincar\'{e} polynomial is
  $$
  \begin{aligned}
    P_t(\cN_\s) =&(1+t)^{4g}
   \coeff_{x^0}\frac{(1+tx)^{2g}}{(1-x)(1-t^2x)x^{d_1-d_2}} \cdot
   \\
    & \cdot \Bigg[
   \left(\frac{t^{4d_1-4d_2-4 n_0} x^{n_0}}{1-t^{-4}x} -
    \frac{t^{4g-4-4d_1+6n_0} x^{n_0}}{1-t^{6}x} \right)
   \cdot  \frac{(1+t^3)^{2g}  -t^{2g}(1+t)^{2g}}
    {(1-t^2)^2(1-t^4)} \\ &  +
   \frac{t^{2g-2}(1+t)^{2g}}{(1-t^2)^2(1+t^2)}
    \Bigg(
  \frac{t^{4d_1-4d_2-4\hn+2} x^\hn}{(1-t^{-4}x)(1-t^{-2}x)}
  + \frac{t^{4g-4-4d_1+6\hn} x^{\hn} }{(1-t^{6}x)(1-t^4x)}
    - \frac{(1+t^2)t^{2g-2-2d_2+\hn} x^{\hn}}{(1-t^4x)(1-t^{-2}x)}
    \Bigg) \Bigg] \, .
  \end{aligned}
  $$
\end{theorem}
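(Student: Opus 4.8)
The plan is to compute $e(\cN_\s)$ by wall-crossing across the interval $I=(\s_m,\s_M)$, using the two standard properties of Hodge polynomials in the sense of Deligne: additivity over locally closed stratifications, $e(Z)=e(U)+e(Z\setminus U)$, and multiplicativity over Zariski-locally-trivial fibrations, $e(Z)=e(B)\,e(F)$. (The intermediate flip loci will be neither smooth nor complete, which is exactly why one needs the mixed Hodge structure extension, even though each $\cN_\s$ itself is smooth projective for non-critical $\s$.) First I would pin down the range: there is a maximal critical value $\s_M$ above which $\cN_\s=\varnothing$, and in the top chamber the moduli space admits a direct description. Since $E_2=L$ is a line bundle, a triple near $\s_M$ is forced to have $\phi\colon L\to E_1$ a saturated subbundle with locally free rank $2$ quotient $Q=E_1/L$ of stable type; the moduli space then fibers over $\Jac^{d_2}(X)\times M(2,d_1-d_2)$ with projective-space fibers $\PP(\Ext^1(Q,L))$ recording the extension class. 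Computing $e$ of this base case uses $e(\Jac)=(1+u)^g(1+v)^g$ together with the rank $2$ moduli polynomial, which is exactly the source of the factor $\frac{(1+u^2v)^g(1+uv^2)^g-(uv)^g(1+u)^g(1+v)^g}{(1-uv)^2(1-(uv)^2)}$ appearing in the statement.

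Next I would analyse the flips. Because $E_2$ has rank $1$, every proper sub- or quotient-triple that can destabilise is of one of two types, $(E_1',0,0)$ with $E_1'\subset E_1$, or $(E_1',E_2,\phi)$ with $\phi(E_2)\subseteq E_1'$ and $\rk(E_1')\in\{1,2\}$. Setting the $\s$-slopes $\mu_\s(T')=\mu_\s(T)$ equal produces a finite list of critical values $\s_c$ indexed by an integer, and at each one the two moduli spaces differ only along flip loci $\Smasmenos$. Each flip locus is itself a projective fibration, the fibers being projectivisations of the relevant $\Ext^1$, over a product of lower moduli: a symmetric product $\Sym^m X$ recording the vanishing divisor of $\phi$ (equivalently the degree $m$ of the saturation of $\phi(L)$), a Jacobian factor for $L$, and a rank $1$ or rank $2$ factor for the complementary bundle. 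The additivity relation $e(\modulimas)-e(\modulimenos)=e(\Smas)-e(\Smenos)$ then reduces everything to bookkeeping of these contributions; the symmetric products are what introduce the Macdonald generating function $\tfrac{(1+ux)^g(1+vx)^g}{(1-x)(1-uvx)}$, while the shift by $x^{d_1-d_2}$ tracks the degree of the quotient.

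The final step is to sum the flip contributions over the critical values lying between $\s$ and $\s_M$. Because these loci are indexed by the integer $m$ and their Hodge polynomials depend on $m$ only through factors of the form $(uv)^{am}x^m$, the sums are geometric and collapse into the rational functions $\tfrac{1}{1-(uv)^{-2}x}$, $\tfrac{1}{1-(uv)^{3}x}$, and so on; extracting $\coeff_{x^0}$ then selects precisely the finite partial sum determined by $\s$, with the cut-off encoded by $n_0=[\tfrac{\s+d_1+d_2}{2}]$. The Poincaré polynomial in the second display follows mechanically by setting $u=v=t$.

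I expect the main obstacle to be the careful treatment of the rank $2$ quotient $Q$ when its degree is even and it is strictly semistable, i.e.\ an extension of two line bundles of equal degree. In that regime the naive rank $2$ moduli factor must be corrected, the corresponding flip locus acquires a different fiber dimension, and the summation must be restricted to even saturation degrees; this is exactly the origin of the second bracket in the formula, governed by $\hn=2[\tfrac{n_0+1}{2}]$ and the half-degree exponent $\hn/2$. Cleanly separating the stable from the strictly semistable contributions, and verifying that the two resulting geometric series recombine into the single closed form stated, is where the delicate part of the argument lies.
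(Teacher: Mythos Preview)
Your overall approach is the paper's: telescope from $\cN_{\s_M^+}=\varnothing$ down through the critical values $\s_n=2n-d_1-d_2$, compute the flip contribution $C_n=e(\cS_{\s_n^+})-e(\cS_{\s_n^-})$ at each wall, and sum the resulting geometric series in $x^n(uv)^{an}$. Your identification of the even-$n$ walls as the delicate point is also correct.

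However, your description of the flip loci understates what is needed. The claim that ``each flip locus is itself a projective fibration'' holds only at odd-$n$ walls, where the rank-$2$ bundle $F$ of degree $n$ is automatically stable and a single application of the $\PP\Ext^1$-bundle argument suffices. At even-$n$ walls the paper decomposes $\cS_{\s_n^\pm}$ into \emph{six} locally closed strata, indexed by the Jordan--H\"older type of the graded triple: one where the destabilising piece has rank type $(1,0)/(2,1)$ rather than $(2,0)/(1,1)$; one where $F$ is stable; and four where $F$ is strictly semistable (split or non-split extension of line bundles $L_1,L_2$, with $L_1\cong L_2$ or not). These strata are \emph{not} projective bundles: they include $(\PP^{a-1}\setminus\PP^{b-1})$-fibrations, $\CC^{a-1}$-bundles over $\PP^{a-1}$-bundles, Grassmannian $\Gr(2,a)$-bundles, and a $\ZZ_2$-quotient of a fibre product of projective bundles for which Zariski local triviality fails and multiplicativity of $e$ has to be recovered by a separate symmetric-square computation. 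So the even-wall ``correction'' is not a different fibre dimension but a genuinely different geometry, analysed stratum by stratum.

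A second point: the factor $\tfrac{(1+u^2v)^g(1+uv^2)^g-(uv)^g(1+u)^g(1+v)^g}{(1-uv)^2(1-(uv)^2)}$ is not produced by the top chamber alone. It appears at \emph{every} wall---directly as $e(M(2,n))$ when $n$ is odd, and only after substantial algebraic cancellation among the six strata (using the explicit formula for $e(M^s(2,n))$ with $n$ even) when $n$ is even. That recombination, not a single base-case computation, is what makes the first bracket in the stated formula uniform in $n$ and hence summable as a single geometric series over all $n\ge n_0$.
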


Let $M(n,d)$ denote the moduli space of polystable vector bundles
of rank $n$ and degree $d$ over $X$. This moduli space is
projective. We also denote by $M^s(n,d)$ the open subset of stable
bundles, which is smooth of dimension $n^2(g-1)+1$. If
$\GCD(n,d)=1$, then $M(n,d)=M^s(n,d)$.

For the smallest possible values of the parameter $\s\in I$, there
is a map from $\cN_\s(3,1,d_1,d_2)$ to $M(3,d)$ given by
$(E_1,E_2,\phi)\mapsto E_1$. The study of this map allows us to
recover the Hodge polynomial of $M(3,d)$ when $d\not\equiv 0\pmod
3$. This was found previously in \cite{EK} by other methods.

\begin{theorem} \label{thm:main-2}
Assume that $d\not\equiv 0 \pmod 3$. Then the Hodge polynomial of
$M(3,d)$ is
  $$
  \begin{aligned}
   e(M(3,d))
    =& \frac{(1+u)^{g}(1+v)^{g}}{(1-uv)(1-(uv)^2)^2(1-(uv)^3)}
   \Big( -(1+u)^{g}(1+v)^{g}(1+uv)^2(uv)^{2g-1}
   (1+u^2v)^g(1+uv^2)^g \\
   & +(1+u)^{2g}(1+v)^{2g} (uv)^{3g-1}(1+uv+u^2v^2)
   +(1+u^2v^3)^g(1+u^3v^2)^g(1+u^2v)^g(1+uv^2)^g \Big)\, .
  \end{aligned}
   $$

  In particular, the Poincar\'{e} polynomial is
  $$
  P_t(M(3,d))= (1+t)^{2g} \,
    \frac{-(1+t)^{2g}(1+t^2)^2t^{4g-2}
   (1+t^3)^{2g}+(1+t)^{4g}t^{6g-2}(1+t^2+t^4)
   +(1+t^5)^{2g} (1+t^3)^{2g}}{(1-t^2)(1-t^4)^2(1-t^6)}
   \, .
     $$
\end{theorem}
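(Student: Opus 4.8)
The plan is to exhibit $M(3,d)$ as the base of an explicit fibration whose total space is one of the moduli spaces of Theorem~\ref{thm:main-1}, and then to extract $e(M(3,d))$ by dividing out the Hodge polynomial of the fibre. Fix $d_1=d\not\equiv 0\pmod 3$, so that $M(3,d)=M^s(3,d)$ is smooth and admits a Poincar\'e bundle, and take $\s$ just above the minimal value $\s_m=\frac{d_1}{3}-d_2$ of the stability parameter. I would study the forgetful morphism
$$
\pi\colon \cN_{\smp}(1,3,d_1,d_2)\lto M(3,d)\x \Jac^{d_2}(X),\qquad (E_1,E_2,\phi)\longmapsto (E_1,E_2).
$$
Since $M(3,d)$ does not depend on $d_2$, it suffices to carry out the computation for a single convenient value, and I would choose $d_2$ very negative.

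First I would translate $\smp$-stability into bundle data. With $\mu_\s(T)=\frac{d_1+d_2+\s}{4}$ one has $\mu_{\s_m}(T)=\frac{d_1}{3}=\mu(E_1)$, so as $\s\to\s_m^+$ the slope inequality for a subtriple $(F,0,0)$, $F\subsetneq E_1$, becomes $\mu(F)<\mu(E_1)$, i.e.\ stability of $E_1$ (and, because $d\not\equiv0\pmod3$, no subbundle of slope exactly $\frac{d_1}{3}$ occurs); while if $\phi=0$ the subtriple $(0,E_2,0)$ has $\s$-slope $d_2+\s>\mu_\s(T)$ and destabilizes $T$, so $\phi\neq0$ is forced. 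As $E_2$ has rank one, these are the binding subtriples, and I would conclude that $(E_1,E_2,\phi)$ is $\smp$-stable if and only if $E_1$ is stable and $\phi\neq0$. Next, for $d_2\ll 0$ the bundle $E_1\ox E_2^*$ has slope $>2g-2$, hence $H^1(E_1\ox E_2^*)=0$ and $N:=h^0(E_1\ox E_2^*)=d_1-3d_2+3(1-g)$ is constant. The fibre of $\pi$ over $(E_1,E_2)$ is then $\PP(\Hom(E_2,E_1))\cong\PP^{N-1}$, since a stable $E_1$ and a line bundle $E_2$ force $\phi$ to be determined up to a scalar. Using the Poincar\'e bundles on $M(3,d)\x X$ and $\Jac^{d_2}(X)\x X$ I would identify $\cN_{\smp}$ with $\PP(\cV)$, where $\cV$ is the rank-$N$ pushforward of the relative $\cE_1\ox\cE_2^*$.

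Being a Zariski-locally trivial projective bundle, $\pi$ makes Hodge polynomials multiply, so
$$
e(\cN_{\smp})=e(M(3,d))\cdot e(\Jac^{d_2}(X))\cdot e(\PP^{N-1})
=e(M(3,d))\,(1+u)^g(1+v)^g\,\frac{1-(uv)^N}{1-uv}.
$$
To finish, I would compute the left-hand side by specializing Theorem~\ref{thm:main-1} at $\s=\smp$, where $n_0=[\frac{\s+d_1+d_2}{2}]=[\frac{2d_1}{3}]$, perform the $\coeff_{x^0}$-extraction (a geometric-series computation governed by the poles of the rational function in $x$), and divide by $(1+u)^g(1+v)^g\frac{1-(uv)^N}{1-uv}$. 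The main obstacle is this last step: the coefficient extraction and simplification are lengthy, and one must verify that every trace of $d_2$ (entering through both $N$ and $n_0$) cancels so that the quotient collapses to the $d_2$-independent closed form displayed in the statement. The dimension count $\dim\cN_{\smp}=7g-6+d_1-3d_2=\dim M(3,d)+g+(N-1)$ is a useful consistency check along the way.
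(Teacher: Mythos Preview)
Your approach is essentially the paper's: realize $\cN_{\smp}$ as a projective bundle over $M(3,d)\times\Jac^{d_2}X$ (this is exactly Proposition~\ref{prop:moduli-small}, whose content you re-derive), plug in Theorem~\ref{thm:main}, and divide. The one substantive difference is tactical. Rather than keep $d_1=d$ arbitrary and $d_2\ll 0$, the paper fixes the single convenient pair $d_1=6g-5$, $d_2=0$ (so $\mu_1-\mu_2=2g-\tfrac{5}{3}>2g-2$ and the fibre is $\PP^{3g-3}$). With this choice $d_1-d_2-n_0=2g-2$ and $d_1-d_2-\bar n_0=2g-3$, and the $\coeff_{x^0}$ becomes a residue of a rational function of the form
\[
\frac{(1+ux)^g(1+vx)^g}{(1-ax)(1-bx)(1-cx)\,x^{2g-1}}\quad\text{or}\quad
\frac{(1+ux)^g(1+vx)^g}{(1-ax)(1-bx)(1-cx)(1-dx)\,x^{2g-2}},
\]
which has \emph{no pole at $\infty$}; summing the residues at the finite simple poles $1/a,1/b,\ldots$ gives closed formulas $F_1(a,b,c)$, $F_2(a,b,c,d)$ with no residual dependence on $d_1,d_2$. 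This is precisely what dissolves the ``main obstacle'' you flag: instead of checking that all traces of $d_2$ cancel for a generic choice, one picks the choice that makes the residue calculus terminate in a few lines. Your plan would work, but adopting the paper's specific $(d_1,d_2)$ turns the lengthy simplification into a short computation.
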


{\bf Acknowledgements:} The author would like to thank Marina
Logares, Peter Gothen, Mar\'{\i}a-Jes\'{u}s V\'{a}zquez-Gallo and Daniel Ortega
for useful conversations. Special thanks to Peter Newstead for 
pointing out a sign mistake in the formula of Theorem \ref{thm:main-2} 
in the published version of the article.

\section{Hodge Polynomials}
\label{sec:virtual}

Let us start by recalling the Hodge-Deligne theory of algebraic
varieties over $\CC$. Let $H$ be a finite-dimensional complex
vector space. A {\em pure Hodge structure of weight $k$} on $H$ is
a decomposition
 $$
 H=\bigoplus\limits_{p+q=k} H^{p,q}
 $$
such that $H^{q,p}=\overline{H}^{p,q}$, the bar denoting complex
conjugation in $H$. We denote
  $$
  h^{p,q}(H)=\dim H^{p,q}\ ,
  $$
which is called the Hodge number of type $(p,q)$. A Hodge
structure of weight $k$ on $H$ gives rise to the so-called {\em
Hodge filtration} $F$ on $H$, where
  $$
  F^p= \bigoplus\limits_{s\geq p} H^{s,p-s}\ ,
  $$
which is a descending filtration. Note that $\Gr_F^p H =
F^p/F^{p+1}= H^{p,q}$.

Let $H$ be a finite-dimensional complex vector space. A {\em
(mixed) Hodge structure} over $H$ consists of an ascending weight
filtration $W$ on $H$ and a descending Hodge filtration $F$ on $H$
such that $F$ induces a pure Hodge filtration of weight $k$ on
each $\Gr^W_k H= W_k/W_{k-1}$. Again we define
  $$
  h^{p,q}(H)=\dim \, H^{p,q}\, , \qquad \text{where}\quad H^{p,q}
  =\Gr_F^p\Gr^W_{p+q} H\, .
  $$

Deligne has shown \cite{De} that, for each complex algebraic
variety $Z$,  the cohomology $H^k(Z)$ and the cohomology with
compact support $H_c^k(Z)$ both carry natural Hodge structures. If
$Z$ is a compact smooth projective variety (hence compact K\"{a}hler)
then the Hodge structure $H^k(Z)$ is pure of weight $k$ and
coincides with the classical Hodge structure given by the Hodge
decomposition of harmonic forms into $(p,q)$ types.

\begin{definition} \label{def:Hodge-poly}
For \emph{any} complex algebraic variety $Z$ (not necessarily
smooth, compact or irreducible), we define the Hodge numbers as
  $$
  h^{k,p,q}_c(Z)=h^{p,q}(H^k_c(Z))=\dim \Gr^p_F \Gr_{p+q}^W H^k_c(Z)\, .
  $$
Introduce the Euler characteristic
 $$
   \chi^{p,q}_c(Z) = \sum (-1)^k h^{k,p,q}_c(Z)
 $$
The \emph{Hodge polynomial} of $Z$ is defined \cite{DK} as
 $$
 e(Z)=e(Z)(u,v)= \sum_{p,q} (-1)^{p+q}\chi_c^{p,q}(Z) u^p v^q\, .
 $$
\end{definition}

If $Z$ is smooth and projective then the mixed Hodge structure on
$H^k(Z)$ is pure of weight $k$, so $\Gr_k^W H_c^k(Z)
=H_c^k(Z)=H^k(Z)$ and the other pieces $\Gr_m^W H_c^k(Z)=0$,
$m\neq k$. So
   $$
   \chi_c^{p,q}(Z)=(-1)^{p+q} h^{p,q}(Z),
   $$
where $h^{p,q}(Z)$ is the usual Hodge number of $Z$. In this case,
  $$
  e(Z)(u,v)= \sum_{p,q} h^{p,q}(Z) u^p v^q\,
  $$
is the (usual) Hodge polynomial of $Z$. Note that in this case,
the Poincar\'{e} polynomial of $Z$ is
  \begin{equation}\label{eqn:Poinca}
  P_Z(t)=\sum_k b^k(Z) t^k= \sum_k \left(
  \sum_{p+q=k} h^{p,q}(Z) \right) t^k= e(Z)(t,t).
  \end{equation}
where $b^k(Z)$ is the $k$-th Betti number of $Z$.

\begin{theorem}[{\cite[Theorem 2.2]{MOV1}\cite{Du}}] \label{thm:Du}
 Let $Z$ be a complex algebraic variety. Suppose that $Z$ is
 a finite disjoint union $Z=Z_1\cup \cdots \cup Z_n$, where the
 $Z_i$ are algebraic subvarieties. Then
  $$
  e(Z)= \sum_i e (Z_i).
  $$\hfill $\Box$
\end{theorem}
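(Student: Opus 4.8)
The plan is to reduce the statement to the additivity of the compactly supported Hodge--Deligne Euler characteristics $\chi^{p,q}_c$ across a single open/closed splitting, and then to bootstrap to a general finite partition by induction on $n$. For the inductive step I would first note that any disjoint union $Z=Z_1\cup\cdots\cup Z_n$ into locally closed subvarieties contains a piece that is \emph{open} in $Z$: the $Z_{i_0}$ containing a generic point of an irreducible component of $Z$ of maximal dimension is dense in that component and locally closed, hence open there, and one checks it is open in $Z$. Setting $U=Z_{i_0}$ and $Y=Z\setminus U=\bigcup_{i\neq i_0}Z_i$, the set $Y$ is closed and is partitioned by the remaining $Z_i$, so it suffices to establish the two-piece identity $e(Z)=e(U)+e(Y)$ and then apply the inductive hypothesis to $Y$.

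For the two-piece building block I would invoke the long exact sequence in cohomology with compact support,
$$\cdots \to H^k_c(U) \to H^k_c(Z) \to H^k_c(Y) \to H^{k+1}_c(U) \to \cdots\, ,$$
which, by Deligne's theory, is a long exact sequence of mixed Hodge structures: all the restriction, extension, and connecting maps are morphisms of mixed Hodge structures. The goal is then to extract, for each fixed bidegree $(p,q)$, an exact sequence of the associated graded pieces $\Gr^p_F\Gr^W_{p+q}$.

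The crucial technical input is the \emph{strictness} of morphisms of mixed Hodge structures with respect to both the weight filtration $W$ and the Hodge filtration $F$, a fundamental theorem of Deligne. Strictness guarantees that the functor $H\mapsto \Gr^p_F\Gr^W_{p+q}H$ is exact on the category of mixed Hodge structures, so applying it to the sequence above preserves exactness and yields, for each $(p,q)$, a long exact sequence of complex vector spaces
$$\cdots \to \Gr^p_F\Gr^W_{p+q}H^k_c(U) \to \Gr^p_F\Gr^W_{p+q}H^k_c(Z) \to \Gr^p_F\Gr^W_{p+q}H^k_c(Y) \to \cdots\, .$$
Since the alternating sum of dimensions along any long exact sequence vanishes, this gives $\chi^{p,q}_c(Z)=\chi^{p,q}_c(U)+\chi^{p,q}_c(Y)$ for every $(p,q)$. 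Multiplying by $(-1)^{p+q}u^pv^q$ and summing over $p$ and $q$ produces $e(Z)=e(U)+e(Y)$, and the general case follows by the induction described above.

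The step I expect to be the main obstacle is the Hodge-theoretic input rather than the bookkeeping: namely the two facts that the compactly supported long exact sequence is genuinely a sequence of morphisms of mixed Hodge structures, and that morphisms of mixed Hodge structures are strictly compatible with $W$ and $F$, so that $\Gr^p_F\Gr^W_{p+q}$ is an exact functor. Once these are granted, everything else is formal: the alternating-sum computation and the summation defining $e(Z)$ are elementary. The combinatorial reduction to the open/closed case is routine but worth stating with care, since the $Z_i$ are only assumed locally closed.
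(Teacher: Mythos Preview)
The paper does not give its own proof of this statement; it is quoted from \cite{MOV1} and \cite{Du} and closed with a $\Box$. Your Hodge--theoretic core is the standard argument and is correct: the compactly supported long exact sequence for an open/closed pair is a sequence of morphisms of mixed Hodge structures, Deligne's strictness makes $\Gr^p_F\Gr^W_{p+q}$ an exact functor, and the alternating sum of dimensions yields $e(Z)=e(U)+e(Y)$. You are also right that this is where the real content lies.

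The combinatorial reduction, however, has a genuine gap. It is not true that some $Z_i$ must be open in $Z$, and your selection rule does not produce one. Take $Z=V(xy)\subset\AAA^2$ with components $C_1=\{y=0\}$, $C_2=\{x=0\}$, and set
\[
Z_1=(C_1\setminus\{0\})\cup\{(0,1)\},\qquad Z_2=C_2\setminus\{(0,1)\}.
\]
Both pieces are locally closed and partition $Z$, yet $Z_1$ is not open at $(0,1)$ (nearby points $(0,1+\epsilon)$ lie in $Z_2$) and $Z_2$ is not open at the origin (nearby points $(\epsilon,0)$ lie in $Z_1$). So an induction on $n$ that peels off an open piece at each step cannot be carried out as you describe; in particular, picking the piece through a generic point of a top-dimensional component gives only that this piece is open in that component, which need not propagate to $Z$ when components meet.

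The repair is routine. Pick any $Z_1$, put $F=\overline{Z_1}$ and $U=Z\setminus F$; two applications of the open/closed identity give $e(Z)=e(Z_1)+e(F\setminus Z_1)+e(U)$. Now $U$ is partitioned by $\{Z_i\cap U\}_{i\geq 2}$ (handle by induction on $n$), and $F\setminus Z_1$ is a proper closed subvariety partitioned by $\{Z_i\cap(F\setminus Z_1)\}_{i\geq 2}$ (handle by Noetherian induction). Since each $Z_i$ with $i\geq 2$ splits as the open/closed pair $(Z_i\cap U)\sqcup(Z_i\cap(F\setminus Z_1))$, summing recovers $e(Z)=\sum_i e(Z_i)$.
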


The following Hodge polynomials will be needed later:
\begin{enumerate}
 \item For the projective space $\PP^{n-1}$, we have
    $$
    e(\PP^{n-1})
    =1+uv+(uv)^2+\cdots +   (uv)^{n-1}=\frac{1-(uv)^{n}}{1-uv}\ .
    $$
 \item Let $\Jac^d X$ be the Jacobian of (any) degree $d$ of a
   (smooth, projective) complex curve $X$ of genus $g$. Then
    $$
    e(\Jac^d X)=(1+u)^g(1+v)^g.
    $$
 \item Let $X$ be a curve (smooth, projective) complex curve of genus
   $g$, and $k\geq 1$. The Hodge polynomial of the symmetric product
   $\Sym^k X$ is computed in \cite{Bur},
    $$
     e(\Sym^k X)=
    \coeff_{x^0}\frac{(1+ux)^{g}(1+vx)^{g}}{(1-x)(1-uvx)x^{k}}\, .
    $$
 \item The Hodge polynomial of the Grassmannian $\Gr(k,N)$ is given by
  \cite[Lemma 2.5]{MOV2},
    $$
    e(\Gr(k,N))= \frac{(1- (uv)^{N-k+1}) \cdots (1- (uv)^{N-1})
    (1- (uv)^{N})}{(1- uv)\cdots(1-(uv)^{k-1}) (1-(uv)^{k})}\, .
    $$
 \item Suppose that  $\pi:Z\to Y$ is an algebraic fiber bundle with
   fiber $F$ which is
   locally trivial in the Zariski topology, then \cite[Lemma 2.3]{MOV1}
     $$
     e(Z)=e(F)\,e(Y)\, .
     $$
   In particular this is true for $Z=F\times Y$.
 \item Suppose that $\pi:Z\to Y$ is a map between quasi-projective varieties which is
   a locally trivial fiber bundle in the usual topology, with fibers being projective
   spaces $F=\PP^N$ for some $N>0$. Then \cite[Lemma 2.4]{MOV2}
     $$
     e(Z)=e(F)\,e(Y)\,.
     $$
 \item Let $M$ be a smooth projective variety. Consider the algebraic variety
   $Z=(M\x M)/\ZZ_2$, where $\ZZ_2$ acts as $(x,y)\mapsto (y,x)$. The Hodge
   polynomial of $Z$ is \cite[Lemma 2.6]{MOV2}
     $$
     e(Z)= \frac12 \Big(e(M)(u,v)^2 + e(M)(-u^2,-v^2)\Big) \, .
     $$
\end{enumerate}

\section{Moduli spaces of triples}
\label{sec:stable-triples}

\subsection{Holomorphic triples}
\label{subsec:triples-definitions}

Let $X$ be a smooth projective curve of genus $g\geq 2$ over
$\CC$. A \emph{holomorphic triple} $T = (E_{1},E_{2},\phi)$ on $X$
consists of two holomorphic vector bundles $E_{1}$ and $E_{2}$
over $X$, of ranks $n_1$ and $n_2$ and degrees $d_1$ and $d_2$,
respectively, and a holomorphic map $\phi \colon E_{2} \to E_{1}$.
We refer to $(n_1,n_2,d_1,d_2)$ as the \emph{type} of $T$, to
$(n_1,n_2)$ as the \emph{rank} of $T$, and to $(d_1,d_2)$ as the
\emph{degree} of $T$.

A homomorphism from $T' = (E_1',E_2',\phi')$ to $T =
(E_1,E_2,\phi)$ is a commutative diagram
  \begin{displaymath}
  \begin{CD}
    E_2' @>\phi'>> E_1' \\
    @VVV @VVV  \\
    E_2 @>\phi>> E_1,
  \end{CD}
  \end{displaymath}
where the vertical arrows are holomorphic maps. A triple
$T'=(E_1',E_2',\phi')$ is a subtriple of $T = (E_1,E_2,\phi)$ if
$E_1'\subset E_1$ and $E_2'\subset E_2$ are subbundles,
$\phi(E_2')\subset E_1'$ and $\phi'=\phi|_{E_2'}$. A subtriple
$T'\subset T$ is called \emph{proper} if $T'\neq 0 $ and $T'\neq
T$. The quotient triple $T''=T/T'$ is given by $E_1''=E_1/E_1'$,
$E_2''=E_2/E_2'$ and $\phi'' \colon E_2''\to E_1''$ being the map
induced by $\phi$. We usually denote by $(n_1',n_2',d_1',d_2')$
and $(n_1'',n_2'',d_1'',d_2'')$, the types of the subtriple $T'$
and the quotient triple $T''$.

\begin{definition} \label{def:s-slope}
For any $\s \in \RR$ the \emph{$\s$-slope} of $T$ is defined by
 $$
   \mu_{\s}(T)  =
   \frac{d_1+d_2}{n_1+n_2} + \s \frac{n_{2}}{n_{1}+n_{2}}\ .
 $$
To shorten the notation, we define the \emph{$\mu$-slope} and
\emph{$\lambda$-slope} of the triple $T$ as $\mu=\mu(E_{1} \oplus
E_{2})= \frac{d_1+d_2}{n_1+n_2}$ and
$\lambda=\frac{n_{2}}{n_{1}+n_{2}}$, so that $\mu_{\s}(T)=\mu+\s
\lambda$.
\end{definition}

\begin{definition}\label{def:sigma-stable}
We say that a triple $T = (E_{1},E_{2},\phi)$ is
\emph{$\s$-stable} if
  $$
  \mu_{\s}(T') < \mu_{\s}(T) ,
  $$
for any proper subtriple $T' = (E_{1}',E_{2}',\phi')$. We define
\emph{$\s$-semistability} by replacing the above strict inequality
with a weak inequality. A triple is called \emph{$\s$-polystable}
if it is the direct sum of $\s$-stable triples of the same
$\s$-slope. It is \emph{$\s$-unstable} if it is not
$\s$-semistable, and \emph{properly $\s$-semistable} if it is
$\s$-semistable but not $\s$-stable. A $\s$-destabilizing
subtriple $T'\subset T$ is a proper subtriple satisfying
$\mu_{\s}(T') \geq \mu_{\s}(T)$.
\end{definition}

We denote by
  $$
  \cN_\s = \cN_\s(n_1,n_2,d_1,d_2)
  $$
the moduli space of $\s$-polystable triples $T =
(E_{1},E_{2},\phi)$ of type $(n_1,n_2,d_1,d_2)$, and drop the type
from the notation when it is clear from the context. The open
subset of $\s$-stable triples is denoted by $\cN_\s^s =
\cN_\s^s(n_1,n_2,d_1,d_2)$. This moduli space is constructed in
\cite{BGP} by using dimensional reduction. A direct construction
is given by Schmitt \cite{Sch} using geometric invariant theory.

There are certain necessary conditions in order for
$\s$-semistable triples to exist. Let $\mu_i=\mu(E_i)=d_i/n_i$
stand for the slope of $E_i$, for $i=1,2$. We write
  \begin{align*}
  \s_m = &\mu_1-\mu_2\ ,  \\
  \s_M = & \left(1+ \frac{n_1+n_2}{|n_1 - n_2|}\right)(\mu_1 - \mu_2)\ ,
      \qquad \mbox{if $n_1\neq n_2$\ .}
  \end{align*}

\begin{proposition}\cite{BGPG} \label{prop:alpha-range}
The moduli space $\cN_\s(n_1,n_2,d_1,d_2)$ is a complex projective
variety. Let $I$ denote the interval $I=[\s_m,\s_M]$ if $n_1\neq
n_2$, or $I=[\s_m,\infty)$ if $n_1=n_2$. A necessary condition for
$\cN_\s(n_1,n_2,d_1,d_2)$ to be non-empty is that $\s\in I$.
\hfill $\Box$
\end{proposition}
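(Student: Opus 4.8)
The plan is to treat the two assertions separately: the projectivity of $\cN_\s(n_1,n_2,d_1,d_2)$, and the numerical range $\s\in I$.

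For projectivity I would invoke the GIT construction of the moduli space. After fixing the topological type $(n_1,n_2,d_1,d_2)$, the family of $\s$-semistable triples is bounded, so it can be parametrized by a locally closed subscheme of a suitable Quot-type scheme carrying a linearized action of a reductive group, in such a way that $\s$-(semi)stability of a triple matches GIT (semi)stability for an appropriate linearization; this is exactly the content of the dimensional reduction of \cite{BGP} and of Schmitt's direct construction \cite{Sch}. Then $\cN_\s$ is the GIT quotient of a projective variety, hence projective, and its points are the $S$-equivalence classes of $\s$-semistable triples, represented by $\s$-polystable ones. The only nontrivial input here is the boundedness of the set of $\s$-semistable triples of fixed type, which follows from the semistability inequalities bounding the slopes of the subsheaves of $E_1$ and $E_2$ that occur in subtriples.

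For the lower bound $\s\ge\s_m$ I would simply test $\s$-semistability against the canonical proper subtriple $T'=(E_1,0,0)$, which is available whenever $n_2\ge 1$. Its $\s$-slope is $\mu_\s(T')=\mu_1$, so $\s$-semistability forces $\mu_1\le\mu_\s(T)$; clearing denominators and using $\mu_1=d_1/n_1$ this rearranges immediately to $\mu_1-\mu_2\le\s$, that is $\s\ge\s_m$. This step is clean and independent of the particular triple.

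The upper bound $\s\le\s_M$ is the heart of the matter. Assuming first $n_1>n_2$ and $\phi\neq 0$, I would test semistability against the subtriple $T'=(\overline{\im\phi},E_2,\phi)$, where $\overline{\im\phi}$ is the saturation of the image of $\phi$ in $E_1$; this is a proper subtriple with $n_2'=n_2$, $d_2'=d_2$, $n_1'=\rk\phi\le n_2<n_1$, and $d_1'=\deg\overline{\im\phi}\ge\deg\im\phi$. When $\phi$ is injective as a sheaf map one has $n_1'=n_2$ and $\deg\im\phi=d_2$, hence $d_1'\ge d_2$; feeding $n_1'=n_2$ and $d_1'\ge d_2$ into the semistability inequality $\mu_\s(T')\le\mu_\s(T)$ and clearing denominators yields, after simplification using $n_2 d_1-n_1 d_2=n_1 n_2(\mu_1-\mu_2)$, precisely $\s\le\frac{2n_1}{n_1-n_2}(\mu_1-\mu_2)=\s_M$. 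A larger value of $d_1'$ only sharpens the inequality, so the extreme case $d_1'=d_2$ is exactly what produces the constant $\s_M$, and the factor $\frac{2n_1}{n_1-n_2}=1+\frac{n_1+n_2}{|n_1-n_2|}$ matches the stated expression. For $n_1<n_2$ the map $\phi$ necessarily has nontrivial kernel, and I would run the symmetric argument using the subtriple $(0,K,0)$ with $K\subseteq E_2$ the saturated kernel (alternatively, one may pass to the dual triple $(E_2^*,E_1^*,\phi^*)$, which interchanges the two ranks); this produces the symmetric bound with $|n_1-n_2|$.

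The main obstacle is to make the upper-bound argument unconditional, i.e. to remove the hypotheses that $\phi$ be injective (for $n_1>n_2$) and the control on $K$ (for $n_1<n_2$). In the non-injective situation $\deg\overline{\im\phi}$ and the slope of the saturated kernel are not controlled by the type alone, so one must invoke the maximal destabilizing (Harder--Narasimhan) structure to choose a subtriple whose slope is pinned down, and then check that across all cases the resulting bound never exceeds $\s_M$. Pinning down these degree estimates is the delicate part; this is precisely the verification carried out in \cite{BGPG}, which I would cite for the full details.
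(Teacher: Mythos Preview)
The paper does not prove this proposition at all: it is stated with a citation to \cite{BGPG} and closed with a $\Box$, so there is no proof in the paper to compare against. Your sketch is along the standard lines (and is essentially how the argument runs in \cite{BGPG}): projectivity via the GIT/Schmitt construction, the lower bound from the subtriple $(E_1,0,0)$, and the upper bound from the saturated image subtriple (or the kernel/dual in the case $n_1<n_2$).

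Your computations for the lower bound and for the upper bound in the generically injective case are correct, and you are right that the remaining issue is to control the degree of $\overline{\im\phi}$ (resp.\ of the saturated kernel) when $\phi$ is not generically injective; this is handled in \cite{BGPG} by a short case analysis, and you correctly defer to that reference. One small omission worth noting explicitly: if $\phi=0$ then $T$ splits as $(E_1,0,0)\oplus(0,E_2,0)$ and $\s$-semistability forces $\s=\s_m$, so this degenerate case is harmless.
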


\bigskip

The moduli space $\cN_\s$ for the smallest possible values of the
parameter can be explicitly described. We refer to the value of
$\s$ given by $\s=\smp=\s_m+\epsilon$ ($\epsilon>0$ small) as
\textit{small}.

\begin{proposition}[{\cite[Proposition 4.10]{MOV1}}] \label{prop:moduli-small}
There is a map
 $$
 \pi:\cN_{\smp}=\cN_{\smp}(n_1,n_2,d_1,d_2) \to M(n_1,d_1) \times M(n_2,d_2)
 $$
which sends $T=(E_1,E_2,\phi)$ to $(E_1,E_2)$. If
$\GCD(n_1,d_1)=1$, $\GCD(n_2,d_2)=1$ and $\mu_1-\mu_2>2g-2$, then
$\cN_{\smp}$ is a projective bundle over $M(n_1,d_1) \times
M(n_2,d_2)$, whose fibers are projective spaces of dimension
$n_2d_1-n_1d_2- n_1n_2(g-1)-1$.
\hfill $\Box$
\end{proposition}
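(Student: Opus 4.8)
The plan is to analyze the fibers of the map $\pi$ directly by understanding what a $\smp$-stable triple looks like when $\s=\s_m+\e$ is small. First I would recall that $\s_m=\mu_1-\mu_2$, and that at $\s=\s_m$ the $\s$-slope condition $\mu_\s(T')<\mu_\s(T)$ degenerates: one computes $\mu_{\s_m}(T)=\mu_1$ for the whole triple and checks that the stability inequality, to first order in $\e$, forces both $E_1$ and $E_2$ to be semistable as bundles (otherwise a subbundle of $E_i$ would yield a destabilizing subtriple). Since we assume $\GCD(n_i,d_i)=1$, semistable means stable, so $E_1\in M(n_1,d_1)$ and $E_2\in M(n_2,d_2)$ are honest stable bundles and the map $\pi$ is well-defined onto the product.

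The next step is to identify the fiber $\pi^{-1}(E_1,E_2)$. For fixed stable $E_1,E_2$, a triple in the fiber is just a choice of homomorphism $\phi\colon E_2\to E_1$, i.e.\ an element of $\Hom(E_2,E_1)=H^0(X,E_2^\vee\ox E_1)$, and two such $\phi$ give isomorphic triples precisely when they differ by the scalar action of $\Aut(E_1)\x\Aut(E_2)=\CC^*\x\CC^*$ (using stability, the automorphisms are just scalars). The diagonal $\CC^*$ acts trivially, so the fiber is the projectivization $\PP(H^0(E_2^\vee\ox E_1))$. What must be verified is that \emph{every} nonzero $\phi$ gives a $\smp$-stable (not merely semistable) triple: this is where the hypothesis $\mu_1-\mu_2>2g-2$ enters, guaranteeing by Riemann--Roch that $h^0(E_2^\vee\ox E_1)$ has the expected dimension and, more importantly, that the small perturbation $\e>0$ strictly separates the slopes so that no subtriple can be destabilizing for $\phi\neq 0$. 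The zero map $\phi=0$ is excluded because it gives a polystable but non-stable (decomposable) triple, which is why we projectivize the nonzero homomorphisms rather than taking all of $H^0$.

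To compute the fiber dimension I would apply Riemann--Roch to the bundle $E_2^\vee\ox E_1$, which has rank $n_1n_2$ and degree $n_2d_1-n_1d_2$. The Euler characteristic is
$$
\chi(E_2^\vee\ox E_1)=n_2d_1-n_1d_2-n_1n_2(g-1).
$$
The condition $\mu_1-\mu_2>2g-2$ is exactly what forces the slope of $E_2^\vee\ox E_1$ to exceed $2g-2$, so that $E_2^\vee\ox E_1$ tensored appropriately has vanishing $H^1$; by Serre duality $H^1(E_2^\vee\ox E_1)\iso H^0(E_1^\vee\ox E_2\ox K_X)^\vee$, and the slope bound kills this group because the dual bundle has negative enough slope against the canonical twist while each factor is stable. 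Hence $h^0=\chi$ and $\dim\PP(H^0)=n_2d_1-n_1d_2-n_1n_2(g-1)-1$, matching the claim.

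The main obstacle I expect is the vanishing $H^1(E_2^\vee\ox E_1)=0$: one must argue carefully that stability of $E_1$ and $E_2$, combined with $\mu_1-\mu_2>2g-2$, rules out any nonzero section of $E_1^\vee\ox E_2\ox K_X$. A clean way is to note that a nonzero such section is a nonzero homomorphism $E_1\to E_2\ox K_X$ between (semi)stable bundles, which forces $\mu(E_1)\leq\mu(E_2\ox K_X)=\mu_2+2g-2$, i.e.\ $\mu_1-\mu_2\leq 2g-2$, contradicting the hypothesis. The remaining work---checking local triviality of $\pi$ in the Zariski topology so as to conclude it is a projective bundle---follows because the construction is relative: over a Zariski-open where the bundle $E_2^\vee\ox E_1$ can be trivialized in families, $\pi$ is the relative projectivization of $\pi_*(\cE_2^\vee\ox\cE_1)$ for universal bundles $\cE_i$, which is locally trivial. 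This gives the projective bundle structure and completes the proof.
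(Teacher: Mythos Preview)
The paper does not prove this proposition; it is quoted from \cite[Proposition~4.10]{MOV1} and closed with a $\Box$, so there is no in-text argument to compare against. Your sketch reproduces the standard proof of that cited result and is correct in outline.

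One minor imprecision worth flagging: you attribute to the hypothesis $\mu_1-\mu_2>2g-2$ both the $H^1$-vanishing and the claim that every nonzero $\phi$ yields a $\smp$-stable triple. In fact the latter needs only the stability of $E_1$ and $E_2$ together with $\phi\neq 0$. Indeed, for any proper subtriple $T'=(E_1',E_2',\phi')$ one has
\[
\mu_{\s_m}(T')-\mu_{\s_m}(T)=\frac{(d_1'-\mu_1 n_1')+(d_2'-\mu_2 n_2')}{n_1'+n_2'}\leq 0,
\]
with equality forcing $E_i'\in\{0,E_i\}$ by stability; since $\phi\neq 0$ rules out $(0,E_2)$, the only equality case is $T'=(E_1,0,0)$, which has $\lambda'=0<\lambda$ and is therefore strictly destabilized once $\e>0$. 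The slope hypothesis $\mu_1-\mu_2>2g-2$ enters solely through your Serre-duality argument forcing $H^1(E_2^\vee\ox E_1)=0$, which is what guarantees that the fibers of $\pi$ have constant dimension and hence that $\pi$ is a genuine projective bundle.
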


To study the dependence of the moduli spaces $\cN_\s$ on the
parameter, we need to introduce the concept of critical value
\cite{BGP,MOV1}.

\begin{definition}\label{def:critical}
The values of $\s_c\in I$ for which there exist $0 \le n'_1 \leq
n_1$, $0 \le n'_2 \leq n_2$, $d'_1$ and $d'_2$, with $n_1'n_2\neq
n_1n_2'$, such that
 \begin{equation}\label{eqn:sigmac}
 \s_c=\frac{(n_1+n_2)(d_1'+d_2')-(n_1'+n_2')(d_1+d_2)}{n_1'n_2-n_1n_2'},
 \end{equation}
are called \emph{critical values}.
\end{definition}

Given a triple $T=(E_1,E_2,\phi)$, the condition of
$\s$-(semi)stability for $T$ can only change when $\s$ crosses a
critical value. If $\s=\s_c$ as in (\ref{eqn:sigmac}) and if $T$
has a subtriple $T'\subset T$ of type $(n_1',n_2',d_1',d_2')$,
then $\mu_{\s_c}(T')=\mu_{\s_c}(T)$ and
 \begin{enumerate}
 \item if $\lambda'>\lambda$ (where $\lambda'$ is the $\lambda$-slope of
 $T'$), then $T$ is not $\s$-stable for
 $\s>\s_c$,
\item if $\lambda'<\lambda$, then $T$ is not $\s$-stable for
 $\s<\s_c$.
 \end{enumerate}
Note that $n_1'n_2\neq n_1n_2'$ is equivalent to $\lambda'\neq
\lambda$.

\begin{proposition}[{\cite[Proposition 2.6]{BGPG}}] \label{prop:triples-critical-range}
Fix $(n_1,n_2,d_1,d_2)$. Then
 \begin{enumerate}
 \item[(1)] The critical values are a finite number of values $\s_c \in I$.
 \item[(2)] The stability and semistability criteria  for two values of $\s$
  lying between two consecutive critical values are equivalent; thus
  the corresponding moduli spaces are isomorphic.
 \item[(3)] If $\s$ is not a critical value and $\GCD(n_1,n_2,d_1+d_2) = 1$,
  then $\s$-semistability is equivalent to $\s$-stability, i.e.,\
  $\cN_\s=\cN_\s^s$.
 \end{enumerate} \hfill $\Box$
\end{proposition}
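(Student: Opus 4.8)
The plan is to reduce all three parts to a single elementary observation. Fix a triple $T$ of type $(n_1,n_2,d_1,d_2)$ and a proper subtriple $T'$ of type $(n_1',n_2',d_1',d_2')$, and consider the comparison of $\s$-slopes
\[
\delta_{T'}(\s):=\mu_\s(T')-\mu_\s(T)=(\mu'-\mu)+(\lambda'-\lambda)\,\s ,
\]
where $\mu'=(d_1'+d_2')/(n_1'+n_2')$ and $\lambda'=n_2'/(n_1'+n_2')$ are the $\mu$- and $\lambda$-slopes of $T'$. This is an affine function of $\s$ with slope $\lambda'-\lambda$. By Definitions~\ref{def:s-slope} and~\ref{def:sigma-stable}, $\s$-stability of $T$ is exactly the requirement $\delta_{T'}(\s)<0$ for every proper $T'$, and $\s$-semistability the requirement $\delta_{T'}(\s)\le 0$. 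The function $\delta_{T'}$ is non-constant precisely when $\lambda'\neq\lambda$, i.e.\ $n_1'n_2\neq n_1n_2'$, and in that case its unique zero is exactly the value $\s_c$ of \eqref{eqn:sigmac}; when $\lambda'=\lambda$ it is identically equal to $\mu'-\mu$.

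With this in hand, part (2) is immediate. On an open interval between two consecutive critical values, no $\delta_{T'}$ vanishes or changes sign: if $\lambda'=\lambda$ it is constant, and if $\lambda'\neq\lambda$ its only zero is a critical value, which by hypothesis lies outside the interval. Hence for every $T$ the collection of destabilizing subtriples is the same throughout the interval, so the notions of $\s$-stable and $\s$-semistable (and likewise the $S$-equivalence relation underlying $\s$-polystability) coincide at any two parameters in it. Since $\cN_\s$ is built from this stability condition alone via the GIT construction of \cite{BGP,Sch}, the two moduli spaces are canonically identified.

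For part (3), suppose $\s$ is non-critical and $T$ is strictly $\s$-semistable; then some proper $T'$ satisfies $\delta_{T'}(\s)=0$. If $\lambda'\neq\lambda$, then $\s$ is the zero of $\delta_{T'}$, hence a critical value, contradicting the hypothesis. So $\lambda'=\lambda$, i.e.\ $(n_1',n_2')$ is proportional to $(n_1,n_2)$; writing $t=n_1'/n_1=n_2'/n_2$ we have $t\in\QQ\cap(0,1)$ because $T'$ is proper. Then $\delta_{T'}(\s)=0$ with $\lambda'=\lambda$ forces $\mu'=\mu$, which together with $n_1'+n_2'=t(n_1+n_2)$ gives $d_1'+d_2'=t(d_1+d_2)$. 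Hence the reduced denominator $q$ of $t$ divides each of $n_1$, $n_2$ and $d_1+d_2$, so $q\mid\GCD(n_1,n_2,d_1+d_2)=1$; thus $t$ would be an integer in $(0,1)$, which is impossible. This contradiction gives $\cN_\s=\cN_\s^s$.

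Finally, part (1). When $n_1\neq n_2$ the interval $I=[\s_m,\s_M]$ is bounded, and for each of the finitely many admissible rank pairs $(n_1',n_2')$ with $n_1'n_2\neq n_1n_2'$ the critical value is, by \eqref{eqn:sigmac}, a fixed non-constant affine function of the single integer $d_1'+d_2'$; imposing $\s_c\in I$ confines $d_1'+d_2'$ to a finite range, leaving finitely many values. The one genuinely non-formal point, which I expect to be the main obstacle, is the case $n_1=n_2$, where $I=[\s_m,\infty)$ is unbounded: there one must invoke boundedness of the family of $\s$-semistable triples, so that the degrees of subtriples capable of destabilizing are bounded, to conclude that only finitely many of the arithmetic values \eqref{eqn:sigmac} are actually attained. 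In the application of the present paper $(n_1,n_2)=(3,1)$, so $n_1\neq n_2$ and the elementary argument suffices.
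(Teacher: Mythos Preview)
The paper does not give a proof of this proposition; it is simply quoted from \cite[Proposition~2.6]{BGPG} with a closing $\Box$. Your argument supplies the missing details correctly for parts (2) and (3), and for part (1) in the case $n_1\neq n_2$, which is the only case used in this paper. Your caveat on $n_1=n_2$ is also accurate: with Definition~\ref{def:critical} read literally as a purely arithmetic condition, the set of values \eqref{eqn:sigmac} lying in the unbounded interval $[\s_m,\infty)$ is infinite, so finiteness genuinely requires the boundedness input you indicate; this is how it is handled in \cite{BGPG}.
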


\subsection{Extensions and deformations of triples}
\label{subsec:extensions-of-triples}

The homological algebra of triples is controlled by the
hypercohomology of a certain complex of sheaves which appears when
studying infinitesimal deformations \cite[Section 3]{BGPG}. Let
$T'=(E'_1,E'_2,\phi')$ and $T''=(E''_1,E''_2,\phi'')$ be two
triples of types $(n_{1}',n_{2}',d_{1}',d_{2}')$ and
$(n_{1}'',n_{2}'',d_{1}'',d_{2}'')$, respectively. Let
$\Hom(T'',T')$ denote the linear space of homomorphisms from $T''$
to $T'$, and let $\Ext^1(T'',T')$  denote the linear space of
equivalence classes of extensions of the form
 $$
  0 \lto T' \lto T \lto T'' \lto 0,
 $$
where by this we mean a commutative  diagram
  $$
  \begin{CD}
  0@>>>E_1'@>>>E_1@>>> E_1''@>>>0\\
  @.@A\phi' AA@A \phi AA@A \phi'' AA\\
  0@>>>E'_2@>>>E_2@>>>E_2''@>>>0.
  \end{CD}
  $$
To analyze $\Ext^1(T'',T')$ one considers the complex of sheaves
 \begin{equation} \label{eqn:extension-complex}
    C^{\bullet}(T'',T') \colon ({E_{1}''}^{*} \otimes E_{1}') \oplus
  ({E_{2}''}^{*} \otimes E_{2}')
  \overset{c}{\lto}
  {E_{2}''}^{*} \otimes E_{1}',
 \end{equation}
where the map $c$ is defined by
 $$
 c(\psi_{1},\psi_{2}) = \phi'\psi_{2} - \psi_{1}\phi''.
 $$

We introduce the following notation:
\begin{align*}
  \HH^i(T'',T') &= \HH^i(C^{\bullet}(T'',T')), \\
  h^{i}(T'',T') &= \dim\HH^{i}(T'',T'), \\ 
  \chi(T'',T') &= h^0(T'',T') - h^1(T'',T') + h^2(T'',T'). 
\end{align*}

\begin{proposition}[{\cite[Proposition 3.1]{BGPG}}]
  \label{prop:hyper-equals-hom}
  There are natural isomorphisms
  \begin{align*}
    \Hom(T'',T') &\cong \HH^{0}(T'',T'), \\
    \Ext^{1}(T'',T') &\cong \HH^{1}(T'',T'),
  \end{align*}
and a long exact sequence associated to the complex
$C^{\bullet}(T'',T')$:
 $$
 \begin{array}{c@{\,}c@{\,}c@{\,}l@{\,}c@{\,}c@{\,}c}
  0 &\lto \mathbb{H}^0(T'',T') &
  \lto & H^0(({E_{1}''}^{*} \otimes E_{1}') \oplus ({E_{2}''}^{*} \otimes
  E_{2}'))
  & \lto &  H^0({E_{2}''}^{*} \otimes E_{1}') \\
    &  \lto \mathbb{H}^1(T'',T') &
  \lto &  H^1(({E_{1}''}^{*} \otimes E_{1}') \oplus ({E_{2}''}^{*} \otimes
  E_{2}'))
 &  \lto & H^1({E_{2}''}^{*} \otimes E_{1}') \\
 &   \lto \mathbb{H}^2(T'',T') & \lto & 0. & &
 \end{array}
 $$ \hfill $\Box$
\end{proposition}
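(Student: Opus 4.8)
The plan is to treat $C^\bullet(T'',T')=[A^0\xrightarrow{c}A^1]$, with $A^0=({E_1''}^*\otimes E_1')\oplus({E_2''}^*\otimes E_2')$ and $A^1={E_2''}^*\otimes E_1'$, as a two-term complex of coherent sheaves concentrated in degrees $0$ and $1$, and to compute its hypercohomology from the ordinary cohomology of $A^0$ and $A^1$. The long exact sequence is then purely formal: the short exact sequence of complexes $0\to A^1[-1]\to C^\bullet\to A^0\to 0$ induces
$$\cdots\to H^{i-1}(A^1)\to\HH^i(C^\bullet)\to H^i(A^0)\xrightarrow{c}H^i(A^1)\to\HH^{i+1}(C^\bullet)\to\cdots,$$
with connecting maps induced by $c$. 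Because $X$ is a smooth projective curve, $H^i(A^0)=H^i(A^1)=0$ for $i\ge 2$, so this collapses to exactly the long exact sequence displayed in the statement, with $\HH^2\cong\coker\big(H^1(c)\big)$ and all higher terms vanishing.

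The identification $\Hom(T'',T')\cong\HH^0$ drops out immediately: $\HH^0=\ker\big(H^0(c)\big)$ consists of pairs $(\psi_1,\psi_2)\in H^0({E_1''}^*\otimes E_1')\oplus H^0({E_2''}^*\otimes E_2')$ with $c(\psi_1,\psi_2)=\phi'\psi_2-\psi_1\phi''=0$, and this commutation relation is precisely the condition that $(\psi_1,\psi_2)$ be a homomorphism of triples $T''\to T'$.

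The substantive part, which I regard as the main obstacle, is $\Ext^1(T'',T')\cong\HH^1$. Here I would pass to the Dolbeault model: resolving each $A^p$ by its sheaves of $(0,\bullet)$-forms and forming the total complex, a class in $\HH^1$ is represented by a pair $(\beta,\gamma)$ with $\beta=(\beta_1,\beta_2)\in\cA^{0,1}({E_1''}^*\otimes E_1')\oplus\cA^{0,1}({E_2''}^*\otimes E_2')$ and $\gamma\in\cA^{0,0}({E_2''}^*\otimes E_1')$, subject to $\bar\partial\beta=0$ and the mixed condition $\bar\partial\gamma=\phi'\beta_2-\beta_1\phi''$ (the $(1,1)$-component of the total differential; note $\cA^{0,2}=0$ on a curve). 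From such a cocycle I would manufacture an extension $T=(E_1,E_2,\phi)$ by endowing the $C^\infty$ bundles $E_1'\oplus E_1''$ and $E_2'\oplus E_2''$ with the Dolbeault operators having off-diagonal terms $\beta_1,\beta_2$ — so that the rows become the extensions classified by $[\beta_i]\in\Ext^1(E_i'',E_i')=H^1$ — and by defining $\phi=\left(\begin{smallmatrix}\phi'&\gamma\\0&\phi''\end{smallmatrix}\right)$. A direct computation of $\bar\partial_{E_1}\phi-\phi\,\bar\partial_{E_2}$ shows its only possibly nonzero block is the $E_2''\to E_1'$ entry $\bar\partial\gamma-(\phi'\beta_2-\beta_1\phi'')$, so the cocycle condition on $\gamma$ is exactly the holomorphicity of $\phi$, and $T$ is a genuine triple fitting into $0\to T'\to T\to T''\to 0$.

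Conversely, a global $C^\infty$ splitting of both rows of a given extension produces second-fundamental-form terms $\beta_1,\beta_2$ and an off-diagonal block $\gamma$ of $\phi$, and holomorphicity of $\phi$ returns the cocycle relation; a different choice of splitting changes $(\beta,\gamma)$ by a total coboundary, hence leaves the $\HH^1$-class unchanged. The remaining work is bookkeeping: verifying that these two constructions are mutually inverse on equivalence classes of extensions, that the bijection is $\CC$-linear, and — the point demanding the most care — that signs in the total differential are chosen so that $\bar\partial$-closedness in the total complex matches the integrability of the glued operators $\bar\partial_{E_i}$ and the holomorphicity of $\phi$ simultaneously.
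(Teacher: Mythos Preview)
The paper does not actually prove this proposition: it is quoted from \cite[Proposition~3.1]{BGPG} and closed with a $\Box$, so there is no in-text argument to compare against. Your proof is correct and is essentially the standard one (and the one given in the cited reference): the long exact sequence is the hypercohomology sequence of the distinguished triangle $A^1[-1]\to C^\bullet\to A^0$, the identification $\Hom\cong\HH^0$ is immediate from $\HH^0=\ker H^0(c)$, and the Dolbeault construction you describe for $\Ext^1\cong\HH^1$ is exactly how one unpacks first hypercohomology of a two-term complex on a curve into extensions of triples.
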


\begin{proposition}[{\cite[Proposition 3.2]{BGPG}}]
  \label{prop:chi(T'',T')}
  For any holomorphic triples $T'$ and $T''$ we have
  \begin{align*}
    \chi(T'',T') &= \chi({E_{1}''}^{*} \otimes E_{1}')
    + \chi({E_{2}''}^{*} \otimes E_{2}')
    - \chi({E_{2}''}^{*} \otimes E_{1}')  \\
    &= (1-g)(n''_1 n'_1 + n''_2 n'_2 - n''_2 n'_1) + n''_1 d'_1 - n'_1 d''_1
    + n''_2 d'_2 - n'_2 d''_2
    - n''_2 d'_1 + n'_1 d''_2,
  \end{align*}
where $\chi(E)=\dim H^0(E) - \dim H^1(E)$ is the Euler
characteristic of $E$. \hfill $\Box$
\end{proposition}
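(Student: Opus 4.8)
The plan is to prove the two displayed equalities in turn, the first being a piece of homological algebra and the second a Riemann-Roch computation. For the first equality, I would exploit the fact that the complex $C^{\bullet}(T'',T')$ of \eqref{eqn:extension-complex} has only two nonzero terms, concentrated in degrees $0$ and $1$: writing $A^0=({E_1''}^{*}\otimes E_1')\oplus({E_2''}^{*}\otimes E_2')$ and $A^1={E_2''}^{*}\otimes E_1'$, its hypercohomology is governed by the long exact sequence of Proposition \ref{prop:hyper-equals-hom}. Since $X$ is a curve, the sheaves $A^0$ and $A^1$ have no cohomology above degree $1$, so that sequence is finite and exact, running from $\HH^0$ through $H^0(A^0)$, $H^0(A^1)$, $\HH^1$, $H^1(A^0)$, $H^1(A^1)$ up to $\HH^2\to 0$. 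The alternating sum of the dimensions along any finite exact sequence vanishes, and after rearranging the terms this reads
$$
\chi(T'',T') = \chi(A^0)-\chi(A^1) = \chi({E_1''}^{*}\otimes E_1') + \chi({E_2''}^{*}\otimes E_2') - \chi({E_2''}^{*}\otimes E_1'),
$$
which is precisely the first claimed identity.

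For the second equality I would simply insert Riemann-Roch. On $X$ every vector bundle $E$ of rank $n$ and degree $d$ satisfies $\chi(E)=d+n(1-g)$, so I only need the rank and degree of the three tensor-product bundles. Using $\rk(F^{*}\otimes G)=\rk(F)\,\rk(G)$ and $\deg(F^{*}\otimes G)=\rk(F)\deg(G)-\rk(G)\deg(F)$ (the sign coming from $\deg F^{*}=-\deg F$), I would compute
$$
\chi({E_1''}^{*}\otimes E_1') = n_1''d_1'-n_1'd_1''+(1-g)\,n_1''n_1',
$$
and analogously $\chi({E_2''}^{*}\otimes E_2')=n_2''d_2'-n_2'd_2''+(1-g)\,n_2''n_2'$ and $\chi({E_2''}^{*}\otimes E_1')=n_2''d_1'-n_1'd_2''+(1-g)\,n_2''n_1'$. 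Substituting these three expressions into the first equality and collecting the coefficients of $(1-g)$ yields exactly the stated formula.

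I do not expect a serious obstacle here: once the first equality is in place, everything is mechanical. The only points that genuinely require care are the sign in the degree of $F^{*}\otimes G$ and the bookkeeping of which primed or double-primed type plays the role of $F$ and of $G$ in each of the three factors. The one structural input is the exactness and finiteness of the hypercohomology sequence of Proposition \ref{prop:hyper-equals-hom}, which is what reduces the hypercohomological Euler characteristic of the complex to the ordinary Euler characteristics of its two terms; after that, only Riemann-Roch is needed.
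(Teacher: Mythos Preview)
Your argument is correct and is exactly the standard proof: the long exact sequence of Proposition~\ref{prop:hyper-equals-hom} gives the first equality by taking the alternating sum of dimensions, and Riemann--Roch gives the second. Note that the paper does not actually supply a proof of this proposition---it is quoted from \cite[Proposition~3.2]{BGPG} and closed with a $\Box$---so there is nothing further to compare against.
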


\begin{lemma}[{\cite[Proposition 3.5]{BGPG}}] \label{lem:h0-vanishing}
   Suppose that $T'$ and $T''$ are $\s$-semistable, for some
   value of $\s$.
 \begin{enumerate}
  \item[(1)] If $\mu_\s(T')<\mu_\s (T'')$ then
  $\HH^{0}(T'',T') = 0$.
  \item[(2)] If $\mu_\s(T')=\mu_\s (T'')$ and $T'$, $T''$ are
   $\s$-stable, then
  $$
     \HH^{0}(T'',T') \cong
     \begin{cases}
       \CC \quad &\text{if $T' \cong T''$} \\
       0 \quad &\text{if $T' \not\cong T''$}.
     \end{cases}
  $$
 \end{enumerate} \hfill $\Box$
\end{lemma}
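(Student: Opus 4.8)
The plan is to translate the statement about the hypercohomology $\HH^0(T'',T')$ into one about honest homomorphisms of triples. By Proposition~\ref{prop:hyper-equals-hom} there is a natural isomorphism $\HH^0(T'',T')\cong\Hom(T'',T')$, so it suffices to analyze a nonzero triple homomorphism $f=(f_1,f_2)\colon T''\to T'$ and to control the $\s$-slopes of its kernel and image. The structural input I would record first is that $\ker f=(\ker f_1,\ker f_2,\phi''|)$ is a \emph{subtriple} of $T''$ (kernels of bundle maps are subbundles, hence saturated), that $\im f=(\im f_1,\im f_2,\phi'|)$ is a subtriple of $T'$ with possibly non-saturated components, and that $\im f\cong T''/\ker f$. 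I would also note the elementary additivity behind the slope: writing $\mu_\s(T)=\frac{d_1+d_2+\s n_2}{n_1+n_2}$, both numerator and denominator are additive on short exact sequences of triples, so for any $0\to S'\to S\to S''\to 0$ the three $\s$-slopes satisfy the mediant inequalities; in particular $\mu_\s(S')\le\mu_\s(S)\iff\mu_\s(S)\le\mu_\s(S'')$, with the equalities holding simultaneously.

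With this in hand, $\s$-semistability of $T''$ applied to $\ker f$ gives $\mu_\s(\ker f)\le\mu_\s(T'')$, so by the mediant inequality $\mu_\s(\im f)=\mu_\s(T''/\ker f)\ge\mu_\s(T'')$. Passing to the saturation $\overline{\im f}$ only raises degrees (ranks are unchanged), whence $\mu_\s(\im f)\le\mu_\s(\overline{\im f})$, and $\s$-semistability of $T'$ gives $\mu_\s(\overline{\im f})\le\mu_\s(T')$. Chaining these yields
$$
\mu_\s(T'')\ \le\ \mu_\s(\im f)\ \le\ \mu_\s(T').
$$
For part~(1), the hypothesis $\mu_\s(T')<\mu_\s(T'')$ contradicts this chain, so no nonzero $f$ exists and $\HH^0(T'',T')=0$.

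For part~(2) I would use that the slopes now agree, $\mu_\s(T')=\mu_\s(T'')$, forcing equality throughout the chain. The equality $\mu_\s(\im f)=\mu_\s(\overline{\im f})$ means saturation leaves the degree unchanged, so $\im f$ is already a genuine subtriple of $T'$ of slope $\mu_\s(T')$; since $T'$ is $\s$-\emph{stable} and $f\neq0$, strict stability forbids a proper nonzero subtriple of equal slope, so $\im f=T'$ and $f$ is surjective. Symmetrically, the equality $\mu_\s(\ker f)=\mu_\s(T'')$ together with $\s$-stability of $T''$ forces $\ker f=0$ (it cannot be a proper nonzero subtriple of equal slope, and $\ker f\neq T''$ as $f\neq0$), so $f$ is injective. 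Hence every nonzero $f$ is an isomorphism; in particular $\Hom(T'',T')=0$ whenever $T'\not\cong T''$.

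It remains to treat $T'\cong T''$. Fixing one isomorphism identifies $\Hom(T'',T')$ with $\End(T'')$, and the previous step shows $\End(T'')$ is a finite-dimensional division algebra over $\CC$. To pin it down I would run the standard Schur argument: choosing a point $x\in X$ and an eigenvalue $\l$ of the action of $f_1$ (or $f_2$) on a nonzero fiber $(E_i'')_x$, the endomorphism $f-\l\cdot\mathrm{id}$ is non-injective on that fiber, hence is not an isomorphism, hence vanishes; therefore $\End(T'')=\CC$ and $\Hom(T'',T')\cong\CC$. The only genuinely delicate point in the whole argument is the saturation step: one must check that equality of $\s$-slopes in the semistability chain truly forces the image subsheaves to be subbundles, so that $\s$-stability of $T'$ and $T''$ may be invoked against honest subtriples rather than merely against subsheaves.
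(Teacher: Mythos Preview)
Your argument is correct and is the standard Schur-type proof for slope-stable objects; the only minor nitpick is the edge case $\ker f=0$, where $\mu_\s(\ker f)$ is undefined, but then $\im f\cong T''$ and the inequality $\mu_\s(\im f)\ge\mu_\s(T'')$ holds trivially, so nothing breaks. Note that the paper itself gives no proof of this lemma: it simply cites \cite[Proposition~3.5]{BGPG} and closes with a $\Box$, so there is no in-paper argument to compare yours against; what you have written is essentially the proof one finds in \cite{BGPG}.
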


\begin{lemma} \label{lem:H2=0}
 If $T''=(E_1'',E_2'',\phi'')$ is an injective triple, that is $\phi'':E_2''\to E_1''$ is
 injective, then $\HH^2(T'',T')=0$.
\end{lemma}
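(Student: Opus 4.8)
The plan is to extract $\HH^2(T'',T')$ directly from the long exact sequence of Proposition \ref{prop:hyper-equals-hom}. With $C^0 = ({E_1''}^* \otimes E_1') \oplus ({E_2''}^* \otimes E_2')$ and $C^1 = {E_2''}^* \otimes E_1'$, the tail of that sequence reads
$$
H^1(C^0) \overset{c_*}{\lto} H^1({E_2''}^* \otimes E_1') \lto \HH^2(T'',T') \lto 0,
$$
so that $\HH^2(T'',T') = \coker(c_*)$. Hence the whole statement reduces to showing that $c_*$ is surjective on first cohomology. I would next split the map $c(\psi_1,\psi_2) = \phi'\psi_2 - \psi_1\phi''$ into its two summands and observe that, taking $\psi_2 = 0$, it is enough to prove that
$$
{E_1''}^* \otimes E_1' \overset{\rho}{\lto} {E_2''}^* \otimes E_1', \qquad \psi_1 \longmapsto \psi_1\phi'',
$$
induces a surjection on $H^1$; then $c_*$ is a fortiori surjective. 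Under the identification $E^* \otimes F \cong \mathcal{H}om(E,F)$, this $\rho$ is precomposition with $\phi''$, i.e.\ $\rho = (\phi'')^* \otimes \mathrm{id}_{E_1'}$, where $(\phi'')^*\colon {E_1''}^* \to {E_2''}^*$ is the dual morphism.

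Here is where I would use injectivity of $\phi''$. Dualizing the sequence $0 \to E_2'' \overset{\phi''}{\to} E_1'' \to Q \to 0$ and using that $E_1''$ is locally free yields the right-exact tail
$$
{E_1''}^* \overset{(\phi'')^*}{\lto} {E_2''}^* \lto \mathcal{E}xt^1(Q,\cO_X) \lto 0,
$$
whose cokernel $\mathcal{E}xt^1(Q,\cO_X)$ is a torsion sheaf supported on the finite locus where $\phi''$ drops rank. Since $E_1'$ is locally free, tensoring is exact and $\coker(\rho) = \mathcal{E}xt^1(Q,\cO_X) \otimes E_1'$ is again a torsion sheaf $\tau$.

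To finish I would invoke $\dim X = 1$ twice. Factoring $\rho$ as ${E_1''}^* \otimes E_1' \surj \im(\rho) \inc {E_2''}^* \otimes E_1'$, the sequence $0 \to \im(\rho) \to {E_2''}^* \otimes E_1' \to \tau \to 0$ together with $H^1(\tau)=0$ (a torsion sheaf on a curve has $0$-dimensional support) shows that $H^1(\im\rho) \to H^1({E_2''}^* \otimes E_1')$ is onto, while $0 \to \ker(\rho) \to {E_1''}^* \otimes E_1' \to \im(\rho) \to 0$ together with $H^2(\ker\rho)=0$ (no coherent sheaf on a curve has $H^2$) shows that $H^1({E_1''}^* \otimes E_1') \to H^1(\im\rho)$ is onto. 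Composing gives surjectivity of $\rho$, hence of $c_*$, and therefore $\HH^2(T'',T')=0$. The one genuinely delicate point — the rest being bookkeeping on the long exact sequence — is that $\rho$ (equivalently $(\phi'')^*$) need \emph{not} be surjective as a map of sheaves when $\phi''$ is not a subbundle inclusion; the torsion cokernel must be isolated and then killed by the vanishing of $H^1$ of torsion sheaves and of all $H^2$ on the curve $X$.
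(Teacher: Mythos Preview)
Your argument is correct. You and the paper both reduce to surjectivity of the first component
\[
H^1({E_1''}^*\otimes E_1')\;\lto\;H^1({E_2''}^*\otimes E_1')
\]
of $c_*$, but you reach it by a different mechanism. The paper tensors the injection $\phi'':E_2''\hookrightarrow E_1''$ with ${E_1'}^*\otimes K$, observes that the resulting map of sheaves stays injective (locally free tensor), hence is injective on $H^0$, and then invokes Serre duality to flip this into the desired surjection on $H^1$. Your route instead dualizes $\phi''$ first, isolates the torsion cokernel $\mathcal{E}xt^1(Q,\cO_X)\otimes E_1'$, and kills it using $H^1(\text{torsion})=0$ and $H^2=0$ on a curve. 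The Serre-duality argument is shorter and sidesteps the subtlety you flag about $(\phi'')^*$ not being surjective as a sheaf map; your approach is more elementary in that it avoids duality entirely and makes explicit exactly which cohomological vanishings on a curve are doing the work. One small slip: in your last sentence ``surjectivity of $\rho$'' should read ``surjectivity of $H^1(\rho)$''.
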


\begin{proof}
 Since $E_2''\to E_1''$ is injective, we have that $E_2''\ox
 {E_1'}^{*} \ox K \to E_1''\ox
 {E_1'}^{*} \ox K$ is injective as well. Therefore $H^0(E_2''\ox
 {E_1'}^{*} \ox K) \to H^0(E_1''\ox
 {E_1'}^{*} \ox K)$ is a monomorphism. Taking duals, $H^1({E_{1}''}^{*} \otimes
  E_{1}') \to H^1({E_{2}''}^{*} \otimes E_{1}')$ is an
  epimorphism. Proposition \ref{prop:hyper-equals-hom} implies
  that $\HH^2(T'',T')=0$.
\end{proof}

Since the  space of infinitesimal deformations of a triple $T$ is
isomorphic to $\HH^{1}(T,T)$, the previous results also apply to
studying deformations of a holomorphic triple $T$.

\begin{theorem}[{\cite[Theorem 3.8]{BGPG}}]\label{thm:smoothdim}
Let $T=(E_1,E_2,\phi)$ be an $\s$-stable triple of type
$(n_1,n_2,d_1,d_2)$.
 \begin{enumerate}
 \item[(1)] The Zariski tangent space at the point defined by $T$
 in the moduli space of stable triples  is isomorphic to
 $\HH^{1}(T,T)$.
 \item[(2)] If\/ $\HH^{2}(T,T)= 0$, then the moduli space of
 $\s$-stable triples is smooth in  a neighbourhood of the point
 defined by $T$.
 \item[(3)] At a smooth point $T\in \cN^s_\s(n_1,n_2,d_1,d_2)$ the
 dimension of the moduli space of $\s$-stable triples is
 \begin{align*}
  \dim \cN^s_\s(n_1,n_2,d_1,d_2)
  &= h^{1}(T,T) = 1 - \chi(T,T) \\
  &= (g-1)(n_1^2 + n_2^2 - n_1 n_2) - n_1 d_2 + n_2 d_1 + 1.
 \end{align*}
 \item[(4)] Let $T=(E_1,E_2,\phi)$ be a $\s$-stable triple. If $T$
 is an injective triple, then the moduli space is smooth at $T$.
 \end{enumerate} \hfill $\Box$
\end{theorem}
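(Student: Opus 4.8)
The plan is to prove the four statements through the deformation theory of the triple $T$, whose controlling object is the hypercohomology of the complex $C^\bullet(T,T)$ from \eqref{eqn:extension-complex}. For part (1), I would invoke Proposition \ref{prop:hyper-equals-hom} with $T'=T''=T$: it identifies $\HH^0(T,T)=\Hom(T,T)$ (the infinitesimal automorphisms), $\HH^1(T,T)=\Ext^1(T,T)$ (the first-order deformations, i.e.\ self-extensions $0\to T\to \tilde T\to T\to 0$), and leaves $\HH^2(T,T)$ as the receptacle for obstructions. Since the space of infinitesimal deformations of $T$ is exactly $\HH^1(T,T)$, and the Zariski tangent space to $\cN^s_\s$ at $[T]$ is by definition the space of first-order deformations, part (1) follows. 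For part (2) I would build a local (Kuranishi/versal) model of $\cN^s_\s$ near $[T]$: first-order deformations are parametrized by $\HH^1(T,T)$, and the obstruction to integrating a first-order deformation to higher order lies in $\HH^2(T,T)$. If $\HH^2(T,T)=0$ the deformation problem is unobstructed, so the local model is smooth of dimension $h^1(T,T)$, whence $\cN^s_\s$ is smooth at $[T]$.

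For part (3), at a smooth point the dimension of $\cN^s_\s$ equals the dimension of its Zariski tangent space, namely $h^1(T,T)$ by part (1). To evaluate $h^1$, I would first use $\s$-stability of $T$: applying Lemma \ref{lem:h0-vanishing}(2) with $T'=T''=T$ (equal $\s$-slope and isomorphic) yields $\HH^0(T,T)\cong\CC$, so $h^0(T,T)=1$. Since the relevant smooth points are precisely those at which the obstruction vanishes (cf.\ parts (2) and (4)), we have $h^2(T,T)=0$; hence $\chi(T,T)=h^0(T,T)-h^1(T,T)+h^2(T,T)=1-h^1(T,T)$, giving $h^1(T,T)=1-\chi(T,T)$. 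The explicit value then comes from substituting $T'=T''=T$ into Proposition \ref{prop:chi(T'',T')}: the degree cross-terms cancel, leaving $\chi(T,T)=(1-g)(n_1^2+n_2^2-n_1n_2)-n_2d_1+n_1d_2$, so that $1-\chi(T,T)$ reproduces the claimed formula $(g-1)(n_1^2+n_2^2-n_1n_2)-n_1d_2+n_2d_1+1$.

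Part (4) is then immediate: if $T$ is an injective triple, Lemma \ref{lem:H2=0} with $T''=T'=T$ gives $\HH^2(T,T)=0$, and part (2) yields smoothness at $[T]$. The hard part will be part (2), i.e.\ setting up the local deformation theory rigorously: one must verify not only that $\HH^1(T,T)$ carries the first-order deformations but that the obstructions genuinely land in $\HH^2(T,T)$ and that vanishing of this group forces the local model to be smooth. This is cleanest either by constructing the versal deformation of $T$ directly or by realizing $\cN^s_\s$ as a GIT quotient of a smooth parameter space and analysing the quotient map; the inputs are the long exact sequence of Proposition \ref{prop:hyper-equals-hom} and the Euler-characteristic computation of Proposition \ref{prop:chi(T'',T')}, but the careful identification of the obstruction map is the delicate step.
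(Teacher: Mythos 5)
You should first be aware that the paper contains no proof of this statement: it is imported verbatim from \cite[Theorem 3.8]{BGPG}, and the terminal $\Box$ marks it as a cited result. So there is no internal argument to measure your proposal against; the relevant comparison is with the standard deformation-theoretic proof in the reference, and your sketch follows exactly that line. Parts (1), (3) and (4) are correct and essentially complete relative to the quoted auxiliary results: Proposition \ref{prop:hyper-equals-hom} with $T'=T''=T$ identifies the first-order deformations $\Ext^1(T,T)$ with $\HH^1(T,T)$ (the paper itself records, just before the theorem, that infinitesimal deformations of $T$ are given by $\HH^1(T,T)$); Lemma \ref{lem:h0-vanishing}(2) gives $h^0(T,T)=1$ from $\s$-stability; Proposition \ref{prop:chi(T'',T')} with $T'=T''=T$ does yield $\chi(T,T)=(1-g)(n_1^2+n_2^2-n_1n_2)-n_2d_1+n_1d_2$ after the degree cross-terms cancel, so $1-\chi(T,T)$ matches the stated dimension; and Lemma \ref{lem:H2=0} applied with the injective triple $T$ in the role of $T''$ gives $\HH^2(T,T)=0$, whence (4) follows from (2). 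Part (2) is, as you yourself flag, only a plan: the construction of the local Kuranishi model and the identification of the obstruction space with $\HH^2(T,T)$ is precisely the content of the cited theorem, and neither this paper nor your proposal carries it out. That is acceptable for a proof that openly defers to the reference, but it is the one step where your writeup contains no argument.

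One caveat deserves mention in part (3). Your justification that $h^2(T,T)=0$ at a smooth point --- ``the relevant smooth points are precisely those at which the obstruction vanishes'' --- is circular as written: smoothness of $\cN^s_\s$ at $[T]$ only forces $\dim \cN^s_\s = h^1(T,T)$, not $\HH^2(T,T)=0$; the obstruction map could vanish while $\HH^2(T,T)\neq 0$, in which case $1-\chi(T,T)=h^1(T,T)-h^2(T,T)$ would be strictly smaller than the dimension. This looseness is inherited from the statement itself as quoted from \cite{BGPG}, and in all the applications made in this paper the formula is invoked where $\HH^2$-vanishing is available (via injectivity and Lemma \ref{lem:H2=0}, i.e.\ part (4)); but if you wanted your proof to be self-contained you should either add $\HH^2(T,T)=0$ as a hypothesis in (3) or restrict the equality $h^1=1-\chi$ to such points.
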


\section{Description of the flip loci}
\label{sec:flip-loci}

Fix the type $(n_1,n_2,d_1,d_2)$ for the moduli spaces of
holomorphic triples. We want to describe the differences between
two spaces $\cN^s_{\s_1}$ and $\cN^s_{\s_2}$ when $\s_1$ and
$\s_2$ are separated by a critical value. Let $\s_c\in I$ be a
critical value and set
 $$
 \scp = \s_c + \epsilon,\quad \scm = \s_c -
 \epsilon,
 $$
where $\epsilon > 0$ is small enough so that $\s_c$ is the only
critical value in the interval $(\scm,\scp)$.

\begin{definition}\label{def:flip-loci}
We define the \textit{flip loci} as
 \begin{align*}
 \cS_{\scp} &= \{ T\in\cN_{\scp} \ |
 \ \text{$T$ is $\scm$-unstable}\} \subset\cN_{\scp} \ ,\\
 \cS_{\scm} &= \{ T\in\cN_{\scm} \ |
 \ \text{$T$ is $\scp$-unstable}\}
 \subset\cN_{\scm} \ .
 \end{align*}
 Note that $\cN_{\scp}-\cS_{\scp}=\cN_{\scm}-\cS_{\scm}$.
\end{definition}

For $\s_c=\s_m$, $\cN_{\s_m^-}$ is empty, hence $\cN_{\smp}=
\cS_{\smp}$.  Analogously, when $n_1\neq n_2$, $\cN_{\s_M^+}$ is
empty and $\cN_{\s_M^-}= \cS_{\s_M^-}$.

We shall describe geometrically the flip loci $\cS_{\s_c^{\pm}}$, for
a critical value $\s_c$, in the situations which suffice for our
purposes. In order to do this, recall that for a properly
$\s_c$-semistable triple $T$, there is a Jordan-H\"older
filtration
 \begin{equation}\label{eqn:JH}
 0\subset T_1\subset T_2\subset \cdots \subset T_r=T\, ,
 \end{equation}
where $\bar{T}_i=T_i/T_{i-1}$, $i=1,\ldots,r$, are $\s_c$-stable
triples. Although the filtration (\ref{eqn:JH}) is not uniquely
defined, the graded triple associated to $T$,
 \begin{equation}\label{eqn:graded}
 \mathrm{gr}_{\s_c} (T)= \bigoplus_{i=1}^r \bar{T}_i
 \end{equation}
is well-defined, up to order of the summands. To describe
$\cS_{\s_c^{\pm}}$, we shall stratify them according to the different
possibilities for (\ref{eqn:graded}). We do this in the cases
$r\leq 3$.

The case where $r=2$ is specially simple.

\begin{proposition}\label{prop:locus1}
 Let $\s_c$ be a critical value given by $(n_1',n_2',d_1',d_2')$ in
 (\ref{eqn:sigmac}), and let $(n_1'',n_2'',d_1'',d_2'')=(n_1-n_1',n_2-n_2',d_1-d_1',d_2-d_2')$.
 Let $X^+\subset \cS_\scp$ (resp.\ $X^-\subset \cS_\scm$)
 be the subset of those
 triples $T$ such that $T$ sits in a non-split exact sequence
  \begin{equation}\label{eqn:exact}
  0\to T'\to T\to T''\to 0
  \end{equation}
 where $T'\in \cN_{\s_c}'=\cN_\s(n_1',n_2',d_1',d_2')$
 and $T''\in \cN_{\s_c}''=\cN_\s(n_1'',n_2'',d_1'',d_2'')$ are both
 $\s_c$-stable, and $\lambda'<\lambda$ (resp.\ $\lambda'>\lambda$).
 Assume that $\HH^2(T'',T')=0$, for every $T'\in \cN_{\s_c}'^{,s}$, $T''\in
 \cN_{\s_c}''^{,s}$.
 Then $X^+$ (resp.\ $X^-$) is the projectivization of a bundle of
 rank $-\chi(T'',T')$ over $\cN_{\s_c}'^{,s}\x \cN_{\s_c}''^{,s}$.
\end{proposition}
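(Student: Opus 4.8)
The plan is to fix a single pair of $\s_c$-stable triples $T'\in\cN_{\s_c}'^{,s}$ and $T''\in\cN_{\s_c}''^{,s}$, parametrize the triples $T$ appearing in a non-split extension (\ref{eqn:exact}), and then let the pair vary in families. By Proposition \ref{prop:hyper-equals-hom} the extensions (\ref{eqn:exact}) are classified by $\Ext^1(T'',T')\cong\HH^1(T'',T')$, the zero class corresponding to the split extension; hence the non-split extensions are exactly the nonzero classes, and I would first treat the fibrewise picture before assembling it globally.

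The first point to establish is that $\dim\HH^1(T'',T')$ is constant and equals $-\chi(T'',T')$. Since $T'$ and $T''$ are the sub- and quotient triples of a $\s_c$-semistable $T$ and are both $\s_c$-stable, the remark following Definition \ref{def:critical} gives $\mu_{\s_c}(T')=\mu_{\s_c}(T)=\mu_{\s_c}(T'')$. As $\lambda$ is a weighted mean of $\lambda'$ and $\lambda''$, the hypothesis $\lambda'<\lambda$ forces $\lambda'<\lambda<\lambda''$, so in particular $T'\not\cong T''$ (isomorphic triples have equal $\lambda$-slope). Lemma \ref{lem:h0-vanishing}(2) then yields $\HH^0(T'',T')=0$, while $\HH^2(T'',T')=0$ by the standing hypothesis. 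Hence $\HH^1(T'',T')$ has dimension $-\chi(T'',T')$, and by Proposition \ref{prop:chi(T'',T')} this integer depends only on the fixed types, so it is constant over $\cN_{\s_c}'^{,s}\x\cN_{\s_c}''^{,s}$.

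Next I would pass from extension classes to isomorphism classes of the middle triple $T$. As $T'$ and $T''$ are $\s_c$-stable they are simple, so $\Aut(T')=\Aut(T'')=\CC^*$, and $\CC^*\x\CC^*$ acts on $\Ext^1(T'',T')$ through the character $(t',t'')\mapsto t'(t'')^{-1}$, that is, by scalar multiplication. Two non-split extensions thus yield isomorphic triples $T$ precisely when their classes are proportional; here I would use that for $T\in X^+$ the subtriple $T'$ (the one with $\lambda'<\lambda$) is recovered canonically from $T$, for instance as the maximal $\scm$-destabilizing subtriple, so that every isomorphism of middle terms restricts to isomorphisms of the sub and the quotient. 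Consequently, for a fixed pair $(T',T'')$ the triples in $X^+$ are parametrized by $\PP(\HH^1(T'',T'))$.

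Finally I would globalize. Assuming universal triples $\cT'$ and $\cT''$ over $X\x\cN_{\s_c}'^{,s}$ and $X\x\cN_{\s_c}''^{,s}$, I would form the relative complex $C^\bullet(\cT'',\cT')$ over $X\x\cN_{\s_c}'^{,s}\x\cN_{\s_c}''^{,s}$ and set $W=R^1 p_*\, C^\bullet(\cT'',\cT')$, where $p$ is the projection to $\cN_{\s_c}'^{,s}\x\cN_{\s_c}''^{,s}$. Because the fibrewise dimensions satisfy $h^0=h^2=0$ and $h^1=-\chi(T'',T')$ is constant, cohomology-and-base-change shows that $W$ is locally free of rank $-\chi(T'',T')$, and the universal extension over $\PP(W)$ produces the identification $\PP(W)\cong X^+$. (When universal families do not exist globally one argues \'etale-locally; twisting $\cT'$, $\cT''$ by line bundles pulled back from the base does not change $\PP(W)$, so the projective bundle descends.) I expect the main obstacle to be exactly this globalization together with the canonical recovery of $(T',T'')$ from $T$: one must verify that $T\mapsto(T',T'')$ defines a morphism $X^+\to\cN_{\s_c}'^{,s}\x\cN_{\s_c}''^{,s}$ and that $\PP(W)\to X^+$ is an isomorphism of varieties rather than merely a bijection.
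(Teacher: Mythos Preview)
Your proposal is correct and follows essentially the same approach as the paper's proof. Both arguments establish $\HH^0(T'',T')=0$ via Lemma~\ref{lem:h0-vanishing} (using $\lambda'\neq\lambda''$ to conclude $T'\not\cong T''$), use the hypothesis $\HH^2=0$ to get constant rank $-\chi(T'',T')$, invoke $\Aut(T')=\Aut(T'')=\CC^*$, and projectivize.

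The one substantive difference is in how the canonical recovery of $T'$ from $T$ is justified. You invoke the maximal $\scm$-destabilizing subtriple, whereas the paper argues directly: given any proper subtriple $\tilde T\subset T$ with $\mu_{\s_c}(\tilde T)=\mu_{\s_c}(T)$, compose with the projection to $T''$; $\s_c$-stability of $T''$ forces the map $\tilde T\to T''$ to be zero or surjective, and in each case $\s_c$-stability of $T'$ together with non-splitness pins down $\tilde T=T'$. This direct argument is slightly sharper than your Harder--Narasimhan-style characterization, since it shows $T'$ is the \emph{unique} subtriple of the same $\s_c$-slope (not just the maximal destabilizing one), which is exactly what is needed to deduce that an isomorphism of middle terms induces isomorphisms of sub and quotient. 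Your globalization discussion (universal families, base change, \'etale descent) is more explicit than the paper's, which leaves those details implicit.
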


\begin{proof}
Let us do the case of $X^+$, the other one being analogous. First
of all, note that for any $T\in X^+$, there is a unique exact
sequence like (\ref{eqn:exact}) in which $T$ sits. For consider
any proper non-trivial $\tilde{T}\subset T$ with the same
$\s_c$-slope as $T$, compose with the projection $T\surj T''$ to
get a map $\tilde{T}\to T''$ between triples of the same
$\s_c$-slope. As $T''$ is $\s_c$-stable, either this map is zero
or an epimorphism. In the first case, $\tilde{T}\subset T'$ and
both are non-zero triples of the same $\s_c$-slope, so
$\tilde{T}=T'$ by $\s_c$-stability of $T'$. In the second case, we
have a short exact sequence $0\to \tilde{T}'\to \tilde{T}\to
T''\to 0$, where $\tilde{T}'\subset T'$ has the same $\s_c$-slope
as $T',T''$ and $\tilde{T}$. Therefore $\tilde{T}'=0$ (since
$\tilde{T}$ is properly contained in $T$), and hence
$\tilde{T}\cong T''$. This gives a splitting of the exact sequence
(\ref{eqn:exact}), contrary to our assumption.

The above implies that $X^+$ is parametrized by the extensions
(\ref{eqn:exact}). Now as $T'\in \cN_{\s_c}'$ and $T''\in
\cN_{\s_c}''$ are both $\s_c$-stable and non-isomorphic (for
instance, because $\lambda'\neq \lambda''$), Lemma
\ref{lem:h0-vanishing} implies that $\HH^0(T'',T')=0$. Then
$\HH^1(T'',T')$ has constant dimension equal to
  $$
  \dim \HH^1(T'',T')= -\chi(T'',T')\, .
  $$
Therefore the extensions give a vector bundle of rank
$-\chi(T'',T')$ over  $\cN_{\s_c}'^{,s}\x \cN_{\s_c}''^{,s}$.

If two extensions give rise to the same triple $T$, then the
uniqueness of the subtriple $T'$ yields the existence of a diagram
 $$
  \begin{CD}
  0@>>>T'@>>>T@>>> T''@>>>0\\
  @.@V VV@V \cong VV@V  VV\\
  0@>>>T'@>>>T@>>>T''@>>>0,
  \end{CD}
  $$
where the left and right vertical arrows are automorphisms of $T'$
and $T''$ respectively. Since both of them are $\s_c$-stable, we
have that $\Aut(T')=\Aut(T'')=\CC^*$. 
The action of $\Aut(T')\x \Aut(T'')$ on $\Ext^1(T'',T')-\{0\}$
factors through the action of $\CC^*$ by multiplication on the
fibers $\HH^1(T'',T')-\{0\}$. So $X^+\to\cN_{\s_c}'^{,s}\x
\cN_{\s_c}''^{,s}$ is a projective bundle with fibers
$\PP\HH^1(T'',T')$.
\end{proof}

To study the flips in the case of triples of rank $(3,1)$, we
shall need to deal also with $\s_c$-semistable triples $T$ such
that (\ref{eqn:graded}) has $r=3$ terms. From now on, assume that
$\GCD(n_1,n_2,d_1+d_2)=1$, so that $\cN_{\s}^s=\cN_{\s}$ for
non-critical values $\s$.

In the following, fix a critical value $\s_c$ given by
$(n_1',n_2',d_1',d_2')$ in (\ref{eqn:sigmac}), and let
$(n_1'',n_2'',d_1'',d_2'')=(n_1-n_1',n_2-n_2',d_1-d_1',d_2-d_2')$.
Suppose that $T\in \cS_{\s_c^+}$ (resp.\ $T\in \cS_{\s_c^-}$)  is
a properly $\s_c$-semistable triple with $r=3$ terms in the
Jordan-H\"older filtration. Then $T$ sits in a non-split exact
sequence
  \begin{equation}\label{eqn:uno}
  0\to T'\to T\to T''\to 0
  \end{equation}
where $T'$ is $\s_c$-semistable, $T''$ is $\s_c$-stable and
$\lambda'<\lambda$ (resp.\ $T'$ is $\s_c$-stable, $T''$ is
$\s_c$-semistable and $\lambda'>\lambda$). For this it is enough
to take $T'=T_{r-1}$ (resp.\ $T=T_1$) in the Jordan-H\"older
filtration (\ref{eqn:JH}).

Moreover, there is an exact sequence
  \begin{equation}\label{eqn:dos}
  0\to T_1\to T'\to T_2\to 0 \qquad \text{(resp.\ }   0\to T_1\to T''\to T_2\to 0
  \text{)}
  \end{equation}
where $T_1,T_2$ are $\s_c$-stable triples of the same
$\s_c$-slope. The sequence (\ref{eqn:dos}) may be split or
non-split. Note that the graded triple associated to $T$ is
$T''\oplus T_1\oplus T_2$ (resp.\ $T'\oplus T_1\oplus T_2$).

Finally, we shall denote by $\cN_{\s_c}^1$ and $\cN_{\s_c}^2$ the
moduli spaces of triples of the types determined by $T_1,T_2$,
respectively. Let $\lambda_1,\lambda_2$ denote their
$\lambda$-slopes. Notice that $\lambda_1<\lambda$ (resp.\
$\lambda_2 < \lambda$).

We stratify $\cS_{\s_c^\pm}$ according to whether (\ref{eqn:dos})
is split or non-split, and to whether $T_1\cong T_2$ or
$T_1\not\cong T_2$.

\begin{proposition}\label{prop:locus2}
In the above situation, let $X^+\subset \cS_\scp$ (resp.\
$X^-\subset \cS_\scm$) be the subset of those triples $T$ which
sit in a non-split exact sequence (\ref{eqn:uno}), where the exact
sequence (\ref{eqn:dos}) is non-split and $T_1\not\cong T_2$.

 Let $U\subset \cN_{\s_c}^{1,s}\x\cN_{\s_c}^{2,s}$ be the open set
 consisting of those $(T_1,T_2)$ with $T_1\not\cong T_2$ (which is
 the whole space in case the types are different).
 Assume that $\lambda_2<\lambda$ and
 $\HH^2(T'',T_1)=\HH^2(T'',T_1)=\HH^2(T_2,T_1)=0$, for every $T''\in
 \cN_{\s_c}''^{,s}$, $(T_1,T_2)\in U$
 (resp.\ $\lambda_1<\lambda$ and
 $\HH^2(T_1,T')=\HH^2(T_2,T')=\HH^2(T_2,T_1)=0$, for every $T'\in
 \cN_{\s_c}'^{,s}$, $(T_1,T_2)\in U$). Then
 \begin{enumerate}
 \item The space $Y$ parametrizing the triples $T'$ (resp.\ $T''$) is
 the projectivization of a bundle of rank $-\chi(T_2,T_1)$ over $U$.
 \item $X^+$ (resp.\ $X^-$) is a bundle over $Y\x \cN_{\s_c}''^{,s}$
 (resp.\ $\cN_{\s_c}'^{,s}\x Y$) with
 fibers $\PP^{a-1} - \PP^{b-1}$, $a=-\chi(T'',T')$, $b=-\chi(T'',T_1)$
 (resp.\ $a=-\chi(T'',T')$, $b=-\chi(T_2,T')$).
 \end{enumerate}
\end{proposition}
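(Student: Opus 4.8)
The plan is to build $X^+$ in two stages following the two-step filtration $T_1\subset T'\subset T$, mirroring the structure of Proposition \ref{prop:locus1} but now with an intermediate object $Y$. First I would construct $Y$, the space of extensions \eqref{eqn:dos}. Since $T_1,T_2$ are $\s_c$-stable of the same $\s_c$-slope and $T_1\not\cong T_2$ on $U$, Lemma \ref{lem:h0-vanishing}(2) gives $\HH^0(T_2,T_1)=0$, so $h^1(T_2,T_1)=-\chi(T_2,T_1)$ is constant over $U$ (using the hypothesis $\HH^2(T_2,T_1)=0$). Exactly as in the proof of Proposition \ref{prop:locus1}, the non-split extensions of $T_2$ by $T_1$ are parametrized by $\PP\HH^1(T_2,T_1)$, with the $\Aut(T_1)\x\Aut(T_2)=\CC^*\x\CC^*$ action factoring through scaling on the fibers; this gives part (1), namely $Y\to U$ is a projective bundle of rank $-\chi(T_2,T_1)$. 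The uniqueness argument from Proposition \ref{prop:locus1} applies verbatim to show each $T'$ determines its sub-extension, using $\s_c$-stability of $T_1$ and $T_2$.

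For part (2), I would repeat the extension analysis one level up, now extending $T''$ by $T'$. Over $Y\x\cN_{\s_c}''^{,s}$ I first need $\HH^0(T'',T')=0$ so that $h^1(T'',T')$ is constant; this follows because $T'$ has the same $\s_c$-slope as $T''$ yet is built from stable pieces $T_1,T_2$ none isomorphic to the stable triple $T''$ (as the $\lambda$-slopes differ, $\lambda_1,\lambda_2<\lambda=\lambda''$), so Lemma \ref{lem:h0-vanishing} forces the vanishing. With $\HH^2(T'',T')=0$ — which I would deduce from the hypotheses $\HH^2(T'',T_1)=\HH^2(T'',T_2)=0$ via the long exact sequence in hyper-Hom applied to \eqref{eqn:dos} — the dimension $a=h^1(T'',T')=-\chi(T'',T')$ is constant. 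The non-split extensions of $T''$ by $T'$ are then parametrized by $\PP\HH^1(T'',T')=\PP^{a-1}$.

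The key subtlety, and the step I expect to be the main obstacle, is pinning down exactly which points of $\PP^{a-1}$ actually lie in $X^+$: we must excise those extensions $0\to T'\to T\to T''\to 0$ whose resulting $T$ fails to have $T'$ (and hence the full length-$3$ filtration) as its canonical sub-object, i.e.\ those $T$ for which the extension class, when pushed forward along $T'\surj T_2$, becomes trivial. Applying $\Hom(T'',-)$ to the sequence $0\to T_1\to T'\to T_2\to 0$ gives a long exact sequence whose connecting maps identify the image of $\HH^1(T'',T_1)\to\HH^1(T'',T')$ with precisely the extension classes that "come from" $T_1$; these are the degenerate ones that do not produce a genuine $r=3$ stratum point. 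This image has dimension $b=-\chi(T'',T_1)$ (using $\HH^0(T'',T_1)=0$ and $\HH^2(T'',T_1)=0$, the latter from the hypotheses), giving a linear subspace $\PP^{b-1}\subset\PP^{a-1}$. Removing it yields fibers $\PP^{a-1}-\PP^{b-1}$.

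Finally I would check that these fiber dimensions are locally constant over $Y\x\cN_{\s_c}''^{,s}$, so that the construction patches to an algebraic fiber bundle, and record $a=-\chi(T'',T')$, $b=-\chi(T'',T_1)$ as claimed. The uniqueness of the subtriple $T'$, guaranteeing that distinct extension classes in $\PP^{a-1}-\PP^{b-1}$ give non-isomorphic $T$ (up to the automorphism identifications already quotiented out), follows once more from the argument in Proposition \ref{prop:locus1}, since $T''$ is $\s_c$-stable.
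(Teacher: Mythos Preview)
Your overall architecture matches the paper's: build $Y$ from the inner extension, then parametrize $T$ by extensions in $\Ext^1(T'',T')$ and excise a linear subspace. The vanishing arguments for $\HH^0$ and $\HH^2$ via the long exact sequence are fine.

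The gap is in the central step, where you identify which extensions to excise. You write that the bad classes are those where ``$T$ fails to have $T'$ as its canonical sub-object'' or which ``do not produce a genuine $r=3$ stratum point''. Neither of these is the actual criterion. Membership in $X^+$ means exactly that $T$ is $\scp$-\emph{stable}; even for extensions in your bad locus, $T'$ is still a subtriple of $T$ and the Jordan--H\"older length is still $3$. What goes wrong is that $T$ acquires a $\scp$-destabilising quotient. Concretely, if $\xi\in\Ext^1(T'',T')$ lies in the image of $\Ext^1(T'',T_1)$, then the pushout $T/T_1$ splits as $T_2\oplus T''$, producing a surjection $T\twoheadrightarrow T_2$; since $\lambda_2<\lambda$ (the hypothesis you never invoke), this quotient destabilises $T$ for $\scp$. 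The converse requires checking, by a case analysis on $\tilde T\cap T'\in\{0,T_1,T'\}$ for any subtriple $\tilde T$ of the same $\s_c$-slope with $\tilde\lambda\ge\lambda$, that this is the \emph{only} obstruction to $\scp$-stability. The paper carries out precisely this analysis (in the dual $X^-$ picture), and it is where the hypothesis $\lambda_2<\lambda$ is essential.

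A smaller point: your appeal to Proposition~\ref{prop:locus1} for uniqueness of $T'$ does not transfer directly, because here $T'$ is only strictly $\s_c$-semistable. The paper instead argues (for $X^-$) that $T'$ is singled out as the unique $\s_c$-stable subtriple with $\lambda'>\lambda$; for $X^+$ one characterises $T'$ as the unique maximal proper subtriple of the same $\s_c$-slope. You should also note explicitly that $\Aut(T')=\CC^*$ because $T'$ is a non-split extension of two non-isomorphic stable triples, so the quotient by $\Aut(T')\times\Aut(T'')$ really is just projectivisation.
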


\begin{proof}
We shall do the case of $X^-$, the other being analogous. By
assumption $T_1\not\cong T_2$, hence $\HH^0(T_1,T_2)=0$, by Lemma
\ref{lem:h0-vanishing}. As $\HH^2(T_2,T_1)=0$, we have that $\dim
\Ext^1(T_2,T_1)=-\chi(T_2,T_1)$. Hence the space parametrizing
triples $T''$ as in (\ref{eqn:dos}) is the projectivization of a
bundle of rank $-\chi(T_2,T_1)$ over $U$.

Now, for each $(T',T'')\in \cN_{\s_c}'^{,s}\x Y$, we have an exact
sequence,
    $$
    \begin{aligned}
    0 \to \ & \Hom(T_2,T') \to \Hom(T'',T') \to \Hom(T_1,T') \to
    \\ &
        \Ext^1(T_2,T') \to \Ext^1(T'',T') \to \Ext^1(T_1,T') \to \\ &
        \HH^2(T_2,T') \to \HH^2(T'',T') \to \HH^2(T_1,T') \to 0\,
        .
    \end{aligned}
    $$
As $T_1 \not\cong T'$ and $T_2 \not\cong T'$ (note that
$\lambda'>\lambda>\lambda_1,\lambda_2$), we have that
$\HH^0(T_1,T')=\HH^0(T_2,T')=0$. By assumption
$\HH^2(T_1,T')=\HH^2(T_2,T')=0$. Therefore
$\HH^0(T'',T')=\HH^2(T'',T')=0$, and hence $\dim
\HH^1(T'',T')=-\chi(T'',T')$. The space parametrizing extensions
as in (\ref{eqn:uno}) is a bundle over $\cN_{\s_c}'^{,s}\x Y$
whose fibers are $\Ext^1(T'',T')$. However, not all of them give
that $T$ is  $\scp$-stable. For this it is necessary (and
sufficient) that $T$ does not have a subtriple $\tilde T \subset
T$ with the same $\s_c$-slope and $\tilde\lambda \leq \lambda$.
Such $\tilde T$ projects to a subtriple of $T''$ of the same
$\s_c$-slope. But $T''$ only has one sub-triple of the same
$\s_c$-slope, namely $T_1$. If $\tilde T$ contains $T'$, then it
is the kernel of $T\surj T'' \surj T_2$, and hence $\tilde\lambda
> \lambda$ (since $\lambda_2<\lambda$).
If not, we must have $\tilde{T}\cong T_1$. Therefore the class of
the extension $\xi \in \Ext^1(T'',T')$ defining $T$ lies in the
kernel of $\Ext^1(T'',T')\to \Ext^1(T_1,T')$, hence in the image
of
      $$
      \Ext^1(T_2,T')\subset \Ext^1(T'',T') \, .
      $$
As a conclusion $T$ is $\scp$-stable if and only if it is defined
by an extension in  $A=\Ext^1(T'',T') - \Ext^1(T_2,T')$.

A triple $T\in \cS_\scp$ determines uniquely $T'$ as the only
$\s_c$-stable subtriple with $\lambda'>\lambda$. Hence we must
quotient $A$ by the action of $\Aut(T')\x \Aut(T'')$. However,
$T'$ is $\s_c$-stable, hence $\Aut(T') = \CC^*$. Also
$\Aut(T'')=\CC^*$, since it is a non-trivial extension of two
non-isomorphic $\s_c$-stable triples. Therefore the action
consists on multiplication by scalars in $\Ext^1(T'',T')$. The
result follows.
\end{proof}

\begin{proposition}\label{prop:locus3}
In the above situation, let $X^+\subset \cS_\scp$
 (resp.\ $X^-\subset \cS_\scm$)
 be the subset of those
 triples $T$ such that $T$ sits in a non-split exact sequence
 (\ref{eqn:uno}), where the
 exact sequence (\ref{eqn:dos}) is non-split and $T_1 \cong T_2$.
 Assume that
 $\HH^2(T'',T_1)=\HH^2(T_1,T_1)=0$, for every $T_1\in
\cN_{\s_c}^{1,s}$,
 $T''\in \cN_{\s_c}''^{,s}$ (resp.\
 $\HH^2(T_1,T')=\HH^2(T_1,T_1)=0$, for every $T_1\in \cN_{\s_c}^{1,s}$,
 $T'\in \cN_{\s_c}'^{,s}$).
 Then
 \begin{enumerate}
 \item Then space
 $Y$ parametrizing the triples $T'$ (resp.\ $T''$) is the projectivization of
 a bundle of rank $-\chi(T_1,T_1)+1$ over $\cN_{\s_c}^{1,s}$.
 \item $X^+$ (resp.\ $X^-$) is a $\CC^{a-1}$-bundle over a
 $\PP^{a-1}$-bundle over $Y\x \cN_{\s_c}''^{,s}$ (resp.\
 $\cN_{\s_c}'^{,s}\x Y$), where
 $a=-\chi(T'',T_1)$ (resp.\ $a=-\chi(T_1,T')$).
 \end{enumerate}
\end{proposition}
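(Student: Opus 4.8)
The plan is to mirror the structure of Proposition \ref{prop:locus2}, handling the case $T_1\cong T_2$ where the extra automorphisms force a $\CC^*$-worth of trivializations to collapse. I shall treat the case of $X^-$, the other being analogous. The first step is to understand the space $Y$ parametrizing the triples $T'$. Since now $T_1\cong T_2$, the extension in (\ref{eqn:dos}) is a self-extension of $T_1$ by $T_1$. By Lemma \ref{lem:h0-vanishing}(2), $\HH^0(T_1,T_1)=\CC$, and by the assumption $\HH^2(T_1,T_1)=0$ together with Proposition \ref{prop:hyper-equals-hom} we get $\dim\Ext^1(T_1,T_1)=h^0(T_1,T_1)-\chi(T_1,T_1)=1-\chi(T_1,T_1)$. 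The non-split self-extensions, modulo the scalar action of $\Aut(T_1)=\CC^*$ on $\Ext^1(T_1,T_1)$, are thus parametrized by $\PP(\Ext^1(T_1,T_1))=\PP^{-\chi(T_1,T_1)}$, a projective space of dimension $-\chi(T_1,T_1)$, fibered over $\cN_{\s_c}^{1,s}$. This proves part (1).

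For part (2), the second step is to parametrize extensions (\ref{eqn:uno}) of a fixed $T''$ by a fixed $T'\in Y$. As in the previous proposition, since $\lambda'>\lambda>\lambda_1=\lambda_2$ we have $T_1\not\cong T'$, so $\HH^0(T_1,T')=0$; combined with the hypothesis $\HH^2(T_1,T')=0$ and the long exact sequence obtained by applying $\Hom(-,T')$ to (\ref{eqn:dos}), I deduce $\HH^0(T',T')=\CC$ (this is just $\Aut(T')=\CC^*$) and, more importantly, $\HH^2(T'',T')=0$ and $\HH^0(T'',T')=0$, so that $\dim\Ext^1(T'',T')=-\chi(T'',T')=a$, using $a=-\chi(T_1,T')$ and $\chi(T'',T')=2\chi(T_1,T')$ since $T''$ is an extension of $T_1$ by $T_1$. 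The stability analysis runs exactly as before: $T$ is $\scp$-stable precisely when the extension class $\xi\in\Ext^1(T'',T')$ does \emph{not} come from the subspace $\Ext^1(T_1,T')\subset\Ext^1(T'',T')$ corresponding to extensions containing $T_1$ with the bad slope.

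The third and crucial step is the quotient by automorphisms, and this is where the case $T_1\cong T_2$ genuinely differs and constitutes the main obstacle. Here $\Aut(T'')$ is no longer just $\CC^*$: because $T''$ is a non-trivial self-extension of $T_1$ by $T_1$, its automorphism group is larger, of the form $\{\,a\cdot\mathrm{id}+N : a\in\CC^*,\ N\ \text{nilpotent}\,\}$, so $\Aut(T'')\cong\CC^*\ltimes\CC$ has dimension $2$. The action of this group on the set $A=\Ext^1(T'',T')-\Ext^1(T_1,T')$ of admissible extension classes must be analyzed carefully. The scalar $\CC^*$ acts by multiplication as usual, giving the projectivization; the unipotent part $\CC$ acts by translation via the connecting map, moving $\xi$ within its coset modulo $\Ext^1(T_1,T')$, and this action is free on $A$ precisely because $\xi\notin\Ext^1(T_1,T')$. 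Thus after quotienting by the unipotent part one obtains, over each point of the base, an affine bundle with fiber $\CC^{a-1}$ sitting over the $\PP^{a-1}$ obtained from the residual $\CC^*$-action on the quotient. Assembling these fiberwise descriptions over $\cN_{\s_c}'^{,s}\x Y$ yields the asserted $\CC^{a-1}$-bundle over a $\PP^{a-1}$-bundle.

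I expect the delicate point to be verifying that the unipotent part of $\Aut(T'')$ acts freely (with quotient an affine space rather than something more singular) and that its action is compatible with—and commutes appropriately with—the $\Aut(T')=\CC^*$ action and the residual scalar action, so that the two successive quotients are well-defined algebraic fiber bundles in the Zariski-local sense needed to apply the multiplicativity of Hodge polynomials from Section \ref{sec:virtual}. Making the dimension count $a=-\chi(T'',T_1)$ consistent (noting $\chi(T'',T_1)=2\chi(T_1,T_1)$ is \emph{not} what appears, rather the relevant $a=-\chi(T_1,T')$ in the $X^-$ case) and tracking which $\chi$ governs the fiber dimension is the bookkeeping that must be done with care.
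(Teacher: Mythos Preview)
Your approach is essentially the same as the paper's, and the overall structure is correct. Two points deserve attention.

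First, there is a slip in your dimension count: you write $\dim\Ext^1(T'',T')=-\chi(T'',T')=a$, but since $\chi(T'',T')=2\chi(T_1,T')$ (as you yourself note), this dimension is actually $2a$, not $a$. This is harmless for the final conclusion, since the quotient $\CC^{a-1}$-bundle over $\PP^{a-1}$ has dimension $2a-2$, matching $\dim\Ext^1(T'',T')-\dim\Aut(T'')=2a-2$.

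Second, your description of the unipotent action (``moving $\xi$ within its coset modulo $\Ext^1(T_1,T')$'') is imprecise, and your worry in the final paragraph is well-founded. The paper resolves this cleanly by choosing a (non-canonical) $\cC^\infty$ splitting $\Ext^1(T'',T')\cong\Ext^1(T_1,T')\oplus\Ext^1(T_1,T')\cong\CC^a\oplus\CC^a$, under which $A=\CC^a\times(\CC^a-\{0\})$ and the action of $(\lambda,\mu)\in\CC\times\CC^*=\Aut(T'')$ is explicitly $(u,v)\mapsto(\mu u+\lambda v,\mu v)$. Projection onto the second factor then immediately exhibits the quotient $A/(\CC\times\CC^*)$ as a $\CC^{a-1}$-bundle over $\PP^{a-1}$: the base is $(\CC^a-\{0\})/\CC^*=\PP^{a-1}$, and over a point $[v]$ the fibre is $\CC^a/\CC v\cong\CC^{a-1}$. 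This explicit coordinate picture makes both the freeness of the unipotent action and the algebraic structure of the quotient transparent, avoiding the delicate verification you flagged.
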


\begin{proof}
  We do the case of $X^-$, the other being similar.
  It is clear that $\HH^0(T_1,T_1)=\CC$, since $T_1$ is
  $\s_c$-stable. Therefore, $\dim\HH^1(T_1,T_1)
  =-\chi(T_1,T_1)+1$, and the space $Y$ is a bundle over $\cN_{\s_c}^{1,s}$
  whose fibers are the projective spaces $\PP\HH^1(T_1,T_1)$.

  Fix $(T',T'')\in\cN_{\s_c}'^{,s}\x Y$. Then the space of
  extensions $\Ext^1(T'',T')$ sits in a short exact sequence
   \begin{equation}\label{eqn:spli}
   0\to \Ext^1(T_1,T')\to \Ext^1(T'',T')\to \Ext^1(T_1,T')\to 0 \,
   .
   \end{equation}
  As in the proof of Proposition \ref{prop:locus2}, $T$ is
  $\scp$-stable if and only if the extension class lies in
  $A=\Ext^1(T'',T')- \Ext^1(T_1,T')$.

  From (\ref{eqn:spli}), there is a (non canonical) isomorphism $\Ext^1(T'',T')
  \cong \Ext^1(T_1,T')\oplus\Ext^1(T_1,T')$, so that
  $A$ gets identified to $\Ext^1(T_1,T') \x (\Ext^1(T_1,T')-\{ 0 \})$.  Let us find which
  extensions give rise to the same (isomorphism class of) triple $T$. As $T$ uniquely
  determines $T'$ and $T''$, we must look at the action of $\Aut(T')\times \Aut(T'')$ on
  $\Ext^1(T'',T')$. Again $\Aut(T')=\CC^*$. On the other hand,
  $0\to T_1\to T''\to T_1 \to
  0$ gives that
     $$
     \Aut(T'') = \CC \x\CC^*\, ,
     $$
  where $(\lambda,\mu)\in \CC \x\CC^*$ acts as
  $(x,y) \mapsto ( \mu\, x +\lambda\, y  ,\mu\, y)$ (for this, choose a $\cC^\infty$ splitting
  $T''\cong T_1\oplus T_1$; the splitting is non canonical, but the above formula
  is independent of the splitting). Therefore, there is an induced
  action
  $$
  (\lambda,\mu) \, : \, (u,v) \mapsto ( \mu\, u +\lambda\, v  ,\mu\, v)
  $$
  on $A$. The quotient of $A$ by this action is isomorphic to
     $$
     (\CC^a \x (\CC^a -\{0\} ) ) / \CC \x\CC^* \, .
     $$
Projecting onto the second factor gives a fiber bundle over
$\PP^{a-1}$ whose fibers are $\CC^{a-1}$.
\end{proof}

\begin{proposition}\label{prop:locus4}
In the above situation, let $X^+\subset \cS_\scp$
 (resp.\ $X^-\subset \cS_\scm$)
 be the subset of those
 triples $T$ such that $T$ sits in a non-split exact sequence
 (\ref{eqn:uno}), where the
 exact sequence (\ref{eqn:dos}) is split and $T_1 \not \cong T_2$
 (note that it must be $\lambda_1,\lambda_2<\lambda$ in this case).

 Assume $\HH^2(T'',T_1)=\HH^2(T'',T_2)=0$, for every $T_1,T_2,T''$
 (resp.\ $\HH^2(T_1,T')=\HH^2(T_2,T')=0$, for every $T_1,T_2,T'$).
 Then
 \begin{enumerate}
  \item The space $Y$ parametrizing the triples $T'$ (resp.\ $T''$) is either
  $\cN_{\s_c}^{1,s} \x\cN_{\s_c}^{2,s}$, if $T_1,T_2$ have
  different types, or
  $(\cN_{\s_c}^{1,s} \x\cN_{\s_c}^{1,s}-\Delta)/\ZZ_2$, if $T_1,T_2$
  are of the same type, where $\Delta$ is the diagonal, and
  $\ZZ_2$ acts by permutations.

  \item $X^+$ (resp.\ $X^-$) is a bundle over $Y\x \cN_{\s_c}''^{,s}$
  (resp.\ over $\cN_{\s_c}'^{,s}\x Y$) with fibers
  $\PP^{a-1}\x \PP^{b-1}$, where $a=-\chi(T'',T_1)$, $b=-\chi(T'',T_2)$
  (resp.\ $a=-\chi(T_1,T')$, $b=-\chi(T_2,T')$). This fiber bundle is
  Zariski locally trivial in the first case. In the second
  case,
   $$
   X^\pm= (Z\x_S Z-(p\x_S p)^{-1}(\Delta))/\ZZ_2,
   $$
  where $S=\cN_{\s_c}''^{,s}$ (resp.\
  $S=\cN_{\s_c}'^{,s}$), and $p:Z\to S\x
  \cN_{\s_c}^{1,s}$ is a Zariski locally trivial $\PP^{a-1}$-bundle, and $\ZZ_2$ acts by permutations.
 \end{enumerate}
\end{proposition}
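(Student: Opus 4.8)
The plan is to parallel the proofs of the previous propositions, but now accounting for the fact that (\ref{eqn:dos}) is split, so the middle term $T''$ (in the $X^-$ case; $T'$ in the $X^+$ case) is an honest direct sum $T_1\oplus T_2$ of two non-isomorphic $\s_c$-stable triples, rather than a non-split extension. I will do the $X^-$ case and indicate the modifications for $X^+$. The first step is to identify the parameter space $Y$ of the middle triples. Since $T''=T_1\oplus T_2$ with $T_1\not\cong T_2$, the data is just an unordered pair $(T_1,T_2)$. If $T_1,T_2$ have distinct types, these lie in different moduli spaces and there is no ambiguity, so $Y=\cN_{\s_c}^{1,s}\x\cN_{\s_c}^{2,s}$. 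If they have the same type, then the pair is unordered and distinct (since $T_1\not\cong T_2$), so $Y=(\cN_{\s_c}^{1,s}\x\cN_{\s_c}^{1,s}-\Delta)/\ZZ_2$, giving part (1).

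For part (2), fix $T'\in\cN_{\s_c}'^{,s}$ and $T''=T_1\oplus T_2\in Y$. The key computation is that extensions of $T''$ by $T'$ split as a direct sum: additivity of $\Ext^1$ in the first variable gives
 $$
 \Ext^1(T'',T')=\Ext^1(T_1,T')\oplus\Ext^1(T_2,T')\,.
 $$
By the assumption $\HH^2(T_1,T')=\HH^2(T_2,T')=0$ together with $\HH^0(T_i,T')=0$ (which follows from $\lambda'>\lambda>\lambda_i$ and Lemma \ref{lem:h0-vanishing}), each summand has constant dimension $-\chi(T_i,T')$, so $a=-\chi(T_1,T')$ and $b=-\chi(T_2,T')$. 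As in Proposition \ref{prop:locus2}, the triple $T$ is $\scp$-stable if and only if it has no subtriple of the same $\s_c$-slope with $\tilde\lambda\le\lambda$; the only candidates are $T_1$ and $T_2$ themselves, pulled back along $T\surj T''$. A subtriple $\cong T_1$ exists precisely when the extension class lies in $\Ext^1(T_2,T')$ (the kernel of $\Ext^1(T'',T')\to\Ext^1(T_1,T')$), and symmetrically for $T_2$. Hence $T$ is $\scp$-stable exactly when its class $(\xi_1,\xi_2)$ has both $\xi_1\neq 0$ and $\xi_2\neq 0$, i.e.\ lies in $(\Ext^1(T_1,T')-\{0\})\x(\Ext^1(T_2,T')-\{0\})$.

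It remains to quotient by automorphisms. Since $T$ determines $T'$ and $T''$ uniquely, I quotient by $\Aut(T')\x\Aut(T'')$. Here $\Aut(T')=\CC^*$, while $\Aut(T_1\oplus T_2)=\CC^*\x\CC^*$ acts diagonally on the two factors; combined with the scalar from $\Aut(T')$, each factor $\Ext^1(T_i,T')-\{0\}$ is collapsed to a projective space, giving fibers $\PP^{a-1}\x\PP^{b-1}$ over $\cN_{\s_c}'^{,s}\x Y$. The main obstacle, and the only genuinely delicate point, is the global structure of this bundle when $T_1,T_2$ share the same type. In that situation the $\ZZ_2$ permuting the two summands acts on $Y$ and simultaneously swaps the two $\Ext^1$ factors, so the bundle is only $\ZZ_2$-equivariantly a product and need not be Zariski locally trivial. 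To describe it, I construct the Zariski locally trivial $\PP^{a-1}$-bundle $p:Z\to S\x\cN_{\s_c}^{1,s}$ with $S=\cN_{\s_c}'^{,s}$ whose fiber over $(T',T_1)$ is $\PP\Ext^1(T_1,T')$; forming the relative fiber product $Z\x_S Z$, removing the locus $(p\x_S p)^{-1}(\Delta)$ where the two chosen triples coincide (which would force $T_1\cong T_2$), and taking the $\ZZ_2$-quotient by the simultaneous permutation of the two copies of $Z$, yields
 $$
 X^\pm=(Z\x_S Z-(p\x_S p)^{-1}(\Delta))/\ZZ_2\,,
 $$
as claimed. For $X^+$ the roles of $T'$ and $T''$ are interchanged: $T'=T_1\oplus T_2$, one works with $\Ext^1(T'',T_i)$, $S=\cN_{\s_c}''^{,s}$, and the constants become $a=-\chi(T'',T_1)$, $b=-\chi(T'',T_2)$.
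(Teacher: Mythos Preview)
Your proof is correct and follows essentially the same route as the paper's: decompose $\Ext^1(T'',T')=\Ext^1(T_1,T')\oplus\Ext^1(T_2,T')$, use the vanishing of $\HH^0$ and $\HH^2$ to obtain constant dimensions, identify the relevant stability condition as both extension components being nonzero, quotient by $\Aut(T')\times\Aut(T'')\cong\CC^*\times(\CC^*\times\CC^*)$ to get fibers $\PP^{a-1}\times\PP^{b-1}$, and in the equal-type case describe the global structure via the $\ZZ_2$-quotient of $Z\times_S Z$. One small slip worth fixing: in the $X^-$ case the condition you need is that $T$ be $\scm$-stable (since $T\in\cS_{\scm}\subset\cN_{\scm}$), not $\scp$-stable; your substantive criterion---no subtriple of equal $\s_c$-slope with $\tilde\lambda\le\lambda$---is precisely the $\scm$-stability condition, and the paper's own proof contains the same mislabeling.
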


\begin{proof}
Again we deal with the case of $X^-$. The first statement is
clear. The extensions parametrizing $T$ are
       $$
      \Ext^1(T'',T')= \Ext^1(T_1,T') \oplus \Ext^1(T_2,T') \cong \CC^a \oplus \CC^b\, .
       $$
By the $\s_c$-stability, $\HH^0(T'',T')= \HH^0(T_1,T') \oplus
\HH^0(T_2,T')=0$. By assumption, $\HH^2(T'',T')= \HH^2(T_1,T')
\oplus \HH^2(T_2,T')=0$, so $a=\dim \HH^1(T_1,T')=-\chi(T_1,T')$
and  $b=\dim \HH^1(T_2,T')=-\chi(T_2,T')$.

For $T$ to be $\scp$-stable, it is necessary and sufficient that
it does not contain a subtriple $\tilde T \subset T$ of the same
$\s_c$-slope as $T$, and with $\tilde\lambda <\lambda$. Therefore,
$\tilde T\cong T_1$ or $T_2$. If there is an inclusion $T_i=\tilde
T\inc T$, then the extension class defining $T$ lies in the image
of
      $$
      \Ext^1(T_j,T')\subset \Ext^1(T'',T')
      $$
for $j=3-i$ ($i=1,2$). Therefore the $\scp$-stable triples are
defined by extensions in
      \begin{equation}\label{eqn:lll}
       \Ext^1(T'',T') -( \Ext^1(T_1,T' )\cup \Ext^1(T_2,T' )) =
       (\Ext^1(T_1,T' )-\{0\})\x (\Ext^1(T_2,T' )-\{0\})
      \end{equation}

Let us find which extensions give rise to the same (isomorphism
class of) $T$. Any automorphism of $T$ induces automorphisms of
$T'$ and $T''$. Clearly, $\Aut(T')=\CC^*$. On the other hand,
  $$
  \Aut(T'')= \CC^*\x \CC^*
  $$
acting diagonally on both factors of $\Ext^1(T'',T')=
\Ext^1(T_1,T') \oplus \Ext^1(T_2,T')$ (note that $T_1, T_2$ are
non-isomorphic). If we quotient (\ref{eqn:lll}) by this action, we
get a fiber bundle over $Y$ with fibers
 $$
 (\Ext^1(T_1,T' )-\{0\})\x (\Ext^1(T_2,T' )-\{0\})/\CC^*\x \CC^*
 =\PP \Ext^1(T_1,T' ) \x \PP\Ext^1(T_2,T' )\, .
 $$

The final assertion is clear, since $Z$ is the fiber bundle with
fiber $\PP \Ext^1(T_1,T')$ over $(T',T_1)\in S\x
\cN_{\s_c}^{1,s}$, which is an algebraic vector bundle.
\end{proof}

\begin{proposition}\label{prop:locus5}
In the above situation, let $X^+\subset \cS_\scp$
 (resp.\ $X^-\subset \cS_\scm$)
 be the subset of those
 triples $T$ such that $T$ sits in a non-split exact sequence
 (\ref{eqn:uno}), where the
 exact sequence (\ref{eqn:dos}) is split and $T_1 \cong T_2$.

 Assume that $\HH^2(T'',T_1)=0$, for every $T_1,T''$ (resp.\
 $\HH^2(T_1,T')=0$, for every $T_1,T'$).
 Then $X^+$ (resp.\ $X^-$) is a bundle
 with fibers the grassmannian $\Gr(2, a)$, $a=-\chi(T'',T_1)$ (resp.\
 $a=-\chi(T_1,T')$),
 over $\cN_{\s_c}''^{,s} \x \cN_{\s_c}^{1,s}$
 (resp.\ over $\cN_{\s_c}'^{,s} \x \cN_{\s_c}^{1,s}$).
\end{proposition}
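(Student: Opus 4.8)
The plan is to treat the case of $X^-$ in detail, the case of $X^+$ being entirely analogous (with the rôles of $T'$ and $T''$ interchanged and $a=-\chi(T'',T_1)$). First I would pin down the quotient triple. Since (\ref{eqn:dos}) is split with $T_1\cong T_2$, we have $T''\cong T_1\oplus T_1=T_1\ox V$, where $V=\CC^2$, so $T''$ is determined up to isomorphism by the single point $T_1\in\cN_{\s_c}^{1,s}$ and carries no extra moduli; this is why the base is $\cN_{\s_c}'^{,s}\x\cN_{\s_c}^{1,s}$. For a fixed pair $(T',T_1)$ the triples $T$ are classified by
$$
\Ext^1(T'',T')=\Ext^1(T_1,T')\ox V^*\cong\Hom\big(V,\Ext^1(T_1,T')\big)\, .
$$
Because $\lambda'>\lambda>\lambda_1$, the $\s_c$-stable triples $T_1$ and $T'$ are non-isomorphic of the same $\s_c$-slope, so $\HH^0(T'',T')=\HH^0(T_1,T')^{\oplus2}=0$ by Lemma \ref{lem:h0-vanishing}, while $\HH^2(T'',T')=\HH^2(T_1,T')^{\oplus2}=0$ by hypothesis. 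Hence $\dim\Ext^1(T_1,T')=-\chi(T_1,T')=a$, and an extension class is the same thing as a linear map $\xi\colon V\to\CC^a$.

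The main step, and the one I expect to be the crux, is the stability analysis. As in Proposition \ref{prop:locus2}, $T$ fails to be $\scm$-stable precisely when it contains a proper subtriple $\tilde T$ of the same $\s_c$-slope with $\tilde\lambda\le\lambda$. I would first check that every proper subtriple containing $T'$ is harmless: it contains $T'$ (with $\lambda'>\lambda$) but at most one of the two $T_1$ factors, so its $\lambda$-slope, a weighted average of $\lambda'$ and $\lambda_1$ weighted more towards $\lambda'$ than $T$ itself, is strictly greater than $\lambda$. A destabilizing $\tilde T$ therefore meets $T'$ trivially and injects into $T''=T_1\ox V$; being $\s_c$-semistable of the same slope, its image is either a line $T_1\ox\langle v\rangle\cong T_1$ or all of $T''$, and the latter would split (\ref{eqn:uno}), which is excluded. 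Thus the only possible destabilizing subtriples are copies of $T_1$, and such a copy lying over $\langle v\rangle\subset V$ exists exactly when the restricted class $\xi(v)\in\Ext^1(T_1,T')$ vanishes, i.e.\ when $v\in\ker\xi$. Consequently $T$ is $\scm$-stable if and only if $\xi\colon V\to\CC^a$ is injective; in particular $\xi\ne0$, so (\ref{eqn:uno}) is automatically non-split.

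Finally I would carry out the quotient by automorphisms and globalize. Since $T$ recovers $T'$ and $T''$, two injective classes give isomorphic $T$ iff they differ by the action of $\Aut(T')\x\Aut(T'')$, where $\Aut(T')=\CC^*$ and $\Aut(T'')=\GL(V)=\GL(2,\CC)$ (because $\End(T_1)=\CC$); the action on $\Hom(V,\CC^a)$ is $(c,g)\cdot\xi=c\,\xi\circ g^{-1}$. Both factors preserve $\im\xi$, the scalars $\CC^*=\Aut(T')$ being absorbed by $cI_V\in\GL(V)$, and any two injective maps with the same image differ by an element of $\GL(V)$; hence $\xi\mapsto\im\xi$ identifies the quotient of the injective homomorphisms with the Grassmannian $\Gr(2,a)$ of $2$-planes in $\CC^a$. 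As $(T',T_1)$ varies, the groups $\Ext^1(T_1,T')$ assemble into an algebraic vector bundle $\cE$ of rank $a$ over $\cN_{\s_c}'^{,s}\x\cN_{\s_c}^{1,s}$ (of constant rank by the vanishing of $\HH^0$ and $\HH^2$), and $X^-$ is its associated Grassmannian bundle $\Gr(2,\cE)$, with fibers $\Gr(2,a)$, as claimed. The delicate point throughout is the bookkeeping of which subtriples can destabilize $T$, since here the multiplicity-two factor $T_1\oplus T_1$ is exactly what produces a Grassmannian rather than a projective space in the fiber.
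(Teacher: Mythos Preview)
Your proof is correct and follows essentially the same strategy as the paper's: identify $\Ext^1(T'',T')\cong\Hom(V,\Ext^1(T_1,T'))$, show that the $\s_c^\pm$-stable locus corresponds exactly to the injective maps (equivalently, linearly independent pairs $(\xi_1,\xi_2)$), and then quotient by $\Aut(T')\times\Aut(T'')=\CC^*\times\GL(2,\CC)$ to obtain $\Gr(2,a)$. Your treatment is in fact slightly more careful than the paper's in the destabilizing-subtriple analysis, explicitly ruling out subtriples containing $T'$ and the case where $\tilde T$ surjects onto all of $T''$.
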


\begin{proof}
   We treat the case of $X^-$.
   The triples $T''=T_1\oplus T_1$ are parametrized by $\cN_{\s_c}^{1,s}$.
   Now for any $(T',T_1)\in \cN_{\s_c}'^{,s}\x \cN_{\s_c}^{1,s}$,
   the extensions (\ref{eqn:uno}) are parametrized by
      \begin{equation}\label{eqn:extens}
      \Ext^1(T'',T')= \Ext^1(T_1,T') \oplus \Ext^1(T_1,T') =\Ext^1(T_1,T')\ox \CC^2\, .
      \end{equation}
   An extension gives rise to a
    $\scp$-unstable triple $T$ if there is a subtriple $\tilde{T}\subset T$ of the same
   $\s_c$-slope with $\tilde\lambda< \lambda$.
   The only possibility is that $\tilde{T}\cong T_1$. Composing with the projection $T\surj T''$,
   we get an embedding $\imath: T_1\inc T''=T_1\oplus T_1$. So there exists $(a,b)\in \CC^2-\{0\}$ such
   that $\imath(x)=(ax,bx)$. The quotient $T''/\imath(\tilde
   T)$ is isomorphic to $T_1$, and the extension is in the image of
   $$
    \Ext^1(T_1,T') \subset \Ext^1(T_1,T')\ox \CC^2\, ,
   $$
   embedded via $\xi\mapsto (a\xi,b\xi)$. Therefore the $\scp$-stable triples
   correspond to
    $$
    A=\{ (\xi_1,\xi_2) \in \Ext^1(T_1,T') \oplus \Ext^1(T_1,T') \, |\,
    \text{$\xi_1,\xi_2$ linearly independent} \}.
    $$
    To find $X^-$, we must quotient by the action of $\Aut(T')=\CC^*$ and
   $\Aut(T'')=\GL(2,\CC)$ on the space of extensions (\ref{eqn:extens}). This yields
   an action of $\GL(2,\CC)$ on $A$.
   The quotient is the grassmannian $\Gr(2, \Ext^1(T_1,T'))$. The
   result follows.
\end{proof}

\section{Hodge polynomials of the moduli spaces of
triples of ranks $(2,1)$} \label{sec:Poincare(2,1)}

In this section we recall the main results of \cite{MOV1}. Let
$\moduli=\moduli(2,1,d_1,d_2)$ denote the moduli space of
$\s$-polystable triples $T=(E_1,E_2,\phi)$ where $E_1$ is a vector
bundle of degree $d_1$ and rank $2$ and $E_2$ is a line bundle of
degree $d_2$. By Proposition \ref{prop:alpha-range}, $\s$ is in
the interval
  $$
    I=[\s_m,\s_M]=
    [\mu_1-\mu_2\,,\,4(\mu_1-\mu_2)]=[d_1/2-d_2,2d_1-4d_2], \qquad\mbox{ where }
    \mu_1-\mu_2\ge0\, .
  $$
Otherwise $\cN_\s$ is empty.

\begin{theorem}\label{thm:moduli(2,1)}
For $\s\in I$, $\moduli$ is a projective variety. It is smooth and
of (complex) dimension $3g-2 + d_1 - 2 d_2$ at the stable points
$\moduli^s$. Moreover, for non-critical values of $\s$,
$\moduli=\moduli^s$ (hence it is smooth and projective). \hfill
$\Box$
\end{theorem}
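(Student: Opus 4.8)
The plan is to derive all three assertions from the general theory of Section~\ref{sec:stable-triples}, specialized to the type $(n_1,n_2,d_1,d_2)=(2,1,d_1,d_2)$. The projectivity of $\moduli$ is immediate from Proposition~\ref{prop:alpha-range}, which also records that $\moduli$ is empty unless $\s\in I$, so only the range $I=[\s_m,\s_M]$ is relevant.

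The one genuinely type-specific step, and the place where the real argument sits, is to show that every $\s$-stable triple $T=(E_1,E_2,\phi)$ of this type is \emph{injective}, i.e.\ $\phi$ is injective as a sheaf homomorphism. First I would rule out $\phi=0$: if $\phi$ vanished, then $T'=(0,E_2,0)$ would be a proper subtriple (of type $(0,1,0,d_2)$), and the slope definition gives $\mu_\s(T')=d_2+\s$ while $\mu_\s(T)=\tfrac{1}{3}(d_1+d_2+\s)$. The inequality $\mu_\s(T')\ge\mu_\s(T)$ simplifies to $\s\ge\tfrac12(d_1-2d_2)=\s_m$, which holds throughout $I$; hence $T'$ would be $\s$-destabilizing, contradicting stability, so $\phi\neq0$. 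Since $E_2$ is a line bundle, if $\phi\neq0$ its image is a nonzero subsheaf of the torsion-free bundle $E_1$, hence of rank one; the kernel then has rank zero, and being a subsheaf of the torsion-free sheaf $E_2$ it must vanish. Thus $\phi$ is injective.

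With injectivity established, smoothness at the stable points follows directly from Theorem~\ref{thm:smoothdim}(4) (equivalently, Lemma~\ref{lem:H2=0} applied with $T''=T'=T$ yields $\HH^2(T,T)=0$, and then Theorem~\ref{thm:smoothdim}(2) applies). The dimension is read off from Theorem~\ref{thm:smoothdim}(3): substituting $n_1=2$ and $n_2=1$ into $(g-1)(n_1^2+n_2^2-n_1n_2)-n_1d_2+n_2d_1+1$ gives $3(g-1)+d_1-2d_2+1=3g-2+d_1-2d_2$, as claimed.

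Finally, for the identity $\moduli=\moduli^s$ at non-critical $\s$, I would invoke Proposition~\ref{prop:triples-critical-range}(3): here $\GCD(n_1,n_2,d_1+d_2)=\GCD(2,1,d_1+d_2)=1$ automatically, because $n_2=1$, so away from critical values $\s$-semistability coincides with $\s$-stability. Combining this with projectivity and the smoothness just established shows that $\moduli$ is a smooth projective variety for non-critical $\s$. The only bookkeeping requiring care is the endpoint $\s=\s_m$, where the subtriple $(0,E_2,0)$ only produces equality of slopes; but this simply confirms that such $T$ fail to be stable (while remaining semistable), which is consistent with the statement being about the stable locus $\moduli^s$.
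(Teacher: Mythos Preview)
Your proof is correct and follows exactly the approach the paper intends: the paper does not give a proof of Theorem~\ref{thm:moduli(2,1)} (it is cited from \cite{MOV1} with a box), but the analogous Theorem~\ref{thm:moduli} for rank $(3,1)$ is proved in the paper by the same recipe---projectivity from Proposition~\ref{prop:alpha-range}, smoothness and dimension from Theorem~\ref{thm:smoothdim}---and your argument is the rank $(2,1)$ instance of this, with the injectivity verification (needed to invoke Theorem~\ref{thm:smoothdim}(4)) spelled out in detail where the paper leaves it implicit.
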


\begin{lemma}[{\cite[Lemma 5.3]{MOV1}}] \label{lem:dM}
The critical values for the moduli spaces of triples of type
$(2,1,d_1,d_2)$ are the numbers $\s_c=3d_M-d_1-d_2$ with
$\mu_1\leq d_M\leq d_1-d_2$. Furthermore, $\s_c=\s_m
\Leftrightarrow d_M=\mu_1$. \hfill $\Box$
\end{lemma}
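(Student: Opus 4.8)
The plan is to analyze the critical values directly from Definition~\ref{def:critical} specialized to the type $(2,1,d_1,d_2)$. First I would note that a critical value $\s_c$ arises from a potential destabilizing subtriple of type $(n_1',n_2',d_1',d_2')$ with $0\leq n_1'\leq 2$, $0\leq n_2'\leq 1$, subject to $n_1'n_2\neq n_1 n_2'$, i.e. $n_1'\cdot 1\neq 2\cdot n_2'$, which rules out $(n_1',n_2')=(2,1)$ and $(0,0)$. The surviving possibilities for the rank of the subtriple are $(1,0)$, $(0,1)$, $(2,0)$ and $(1,1)$; I would substitute each into the formula~(\ref{eqn:sigmac}),
$$
\s_c=\frac{(n_1+n_2)(d_1'+d_2')-(n_1'+n_2')(d_1+d_2)}{n_1'n_2-n_1n_2'},
$$
with $(n_1,n_2)=(2,1)$, and simplify.

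Next I would carry out each substitution. For a subtriple of rank $(1,0)$ — a line subbundle $L\subset E_1$ of degree, say, $d_M$, with $E_2'=0$ — the denominator is $1\cdot 1-2\cdot 0=1$ and the numerator is $3d_M-(d_1+d_2)$, giving exactly $\s_c=3d_M-d_1-d_2$. I would then check that the remaining rank types yield nothing new: rank $(0,1)$ has denominator $0\cdot 1-2\cdot 1=-2$ and corresponds to the line bundle $E_2$ sitting inside $T$, rank $(2,0)$ has denominator $2\cdot 1-2\cdot 0=2$ and corresponds to $E_1\subset T$, and rank $(1,1)$ has denominator $1\cdot 1-2\cdot 1=-1$. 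In each of these cases I expect the resulting value of $\s_c$ either to fall outside the interval $I$, to coincide with an endpoint, or to be reducible to the same list $3d_M-d_1-d_2$ after reindexing the degree $d_M$ of the rank-one piece of $E_1$ involved; the key observation is that the rank-one subbundle of $E_1$ is what genuinely varies, and its degree $d_M$ is the only free integer parameter.

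I would then pin down the range $\mu_1\leq d_M\leq d_1-d_2$. The upper bound $d_M\leq d_1-d_2$ should come from requiring $\s_c\leq\s_M=2d_1-4d_2$, i.e. $3d_M-d_1-d_2\leq 2d_1-4d_2$, which rearranges to $d_M\leq d_1-d_2$. The lower bound $\mu_1\leq d_M$ should come from requiring $\s_c\geq\s_m=\mu_1-\mu_2=d_1/2-d_2$; substituting gives $3d_M-d_1-d_2\geq d_1/2-d_2$, i.e. $d_M\geq d_1/2=\mu_1$. The final claim $\s_c=\s_m\Leftrightarrow d_M=\mu_1$ then reads off immediately from this last computation, since $3d_M-d_1-d_2=d_1/2-d_2$ holds precisely when $d_M=d_1/2$.

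The main obstacle I anticipate is not the arithmetic but the bookkeeping needed to argue that the other rank types $(0,1)$, $(2,0)$, $(1,1)$ contribute no genuinely new critical values inside the open interval, and to confirm that the destabilizing subtriple can always be taken to have rank $(1,0)$ (a sub-line-bundle of $E_1$ together with the zero subsheaf of $E_2$). This requires using that $E_2$ is a line bundle, so a proper subtriple either kills $E_2$ entirely or contains it, and in the latter case one can pass to the quotient, which is governed by the complementary rank-$(1,0)$ piece; thus every critical phenomenon is detected by a line subbundle of $E_1$, reducing all cases to the single family $\s_c=3d_M-d_1-d_2$. Once that reduction is justified, the bounds on $d_M$ and the endpoint characterization follow by the elementary inequalities above.
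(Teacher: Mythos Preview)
The paper does not supply its own proof of this lemma; it is quoted from \cite{MOV1} and closed with a $\Box$. Your argument is correct and is exactly the method the paper itself uses to establish the analogous Proposition~\ref{prop:critical-values} for type $(3,1,d_1,d_2)$: one enumerates the admissible rank types $(n_1',n_2')$, notes that $(2,0)$ forces $E_1'=E_1$ and $(0,1)$ forces $E_2'=E_2$ (both yielding precisely $\s_c=\s_m$), that the $(1,1)$ case is dual to the $(1,0)$ case via the quotient $(1,0)$-triple and gives the same family $\s_c=3d_M-d_1-d_2$, and finally that the inequalities $\s_m\leq\s_c\leq\s_M$ translate into $\mu_1\leq d_M\leq d_1-d_2$ by the arithmetic you wrote out.
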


The Hodge polynomials of the moduli spaces $\cN_\s$ for
non-critical values of $\s$ are computed in \cite{MOV1}.

\begin{theorem}[{\cite[Theorem 6.2]{MOV1}}] \label{thm:polinomiono(2,1)no-critico}
Suppose that $\s>\s_m$ is not a critical value. Set
$d_0=\Big[\frac13(\s+d_1+d_2)\Big]+1$. Then the Hodge polynomial
of $\moduli=\moduli (2,1,d_1,d_2)$ is
 $$
  e(\moduli)= \coeff_{x^0}
  \left[\frac{(1+u)^{2g}(1+v)^{2g}(1+xu)^{g}(1+xv)^{g}}{(1-uv)(1-x)(1-uvx)x^{d_1-d_2-d_0}}
    \Bigg(\frac{(uv)^{d_1-d_2-d_0}}{1-(uv)^{-1}x}-
    \frac{(uv)^{-d_1+g-1+2d_0}}{1-(uv)^2x}\Bigg)\right] .
 $$ \hfill $\Box$
\end{theorem}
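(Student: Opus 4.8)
The plan is to compute $e(\cN_\s)$ by \emph{wall-crossing}: start from the smallest admissible parameter, where the moduli space is explicitly known, and track how the Hodge polynomial changes each time $\s$ crosses a critical value up to the given one. Since $\GCD(2,d_1)=1$ forces $\mu_1=d_1/2\notin\ZZ$, Lemma \ref{lem:dM} shows $\s_m$ is not itself critical, so for $\s=\s_m^+$ small the space $\cN_{\s_m^+}$ lies below all walls. By Proposition \ref{prop:moduli-small} (applied when $\mu_1-\mu_2>2g-2$) it is a projective bundle over $M(2,d_1)\times\Jac^{d_2}X$, and its Hodge polynomial follows from the projective-bundle formula (Section \ref{sec:virtual}, item (6)) together with the known polynomials of $M(2,d_1)$ and of the Jacobian (item (2)). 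This supplies the base case, and the closed formula will emerge once the accumulated flip contributions are added.

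Next I would describe each flip. By Lemma \ref{lem:dM} the critical values are $\s_c=3d_M-d_1-d_2$ with $\mu_1\le d_M\le d_1-d_2$, and the destabilizing configuration is a line subbundle $L\subset E_1$ of degree $d_M$, i.e.\ a subtriple $T'=(L,0,0)$ of type $(1,0,d_M,0)$ with quotient $T''$ of type $(1,1,d_1-d_M,d_2)$. Only $r=2$ occurs, so Proposition \ref{prop:locus1} applies once $\HH^2(T'',T')=0$ is checked: any $\s_c$-stable rank-$(1,1)$ triple has nonzero, hence injective, map $\phi''$, so Lemma \ref{lem:H2=0} gives the vanishing. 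Proposition \ref{prop:locus1} then realizes $\cS_{\scp}$ and $\cS_{\scm}$ as projective bundles over the common base $\Jac^{d_M}X\times\cN_{\s_c}(1,1,d_1-d_M,d_2)$, with fibre dimensions $r_+-1$ and $r_--1$, where $r_+=-\chi(T'',T')$ and $r_-=-\chi(T',T'')$. A direct substitution into Proposition \ref{prop:chi(T'',T')} yields $r_+=d_1-d_2-d_M$ and $r_-=g-1-d_1+2d_M$.

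Since $\cN_{\scp}\setminus\cS_{\scp}=\cN_{\scm}\setminus\cS_{\scm}$ (Definition \ref{def:flip-loci}), additivity (Theorem \ref{thm:Du}) and the projective-bundle formula (item (6)) give
$$
e(\cN_{\scp})-e(\cN_{\scm})=\frac{(uv)^{r_-}-(uv)^{r_+}}{1-uv}\;(1+u)^g(1+v)^g\;e\big(\cN_{\s_c}(1,1,d_1-d_M,d_2)\big).
$$
The rank-$(1,1)$ factor I would obtain separately: a stable rank-$(1,1)$ triple is determined by the line bundle $E_2''$ together with the effective divisor cut out by $\phi''$, giving $\Jac^{d_2}X\times\Sym^{d_1-d_M-d_2}X$, whose polynomial is item (3) of Section \ref{sec:virtual}. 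Summing these telescoping differences from the base value up to $\s$, and noting that the crossed walls are exactly those with $d_M<(\s+d_1+d_2)/3$, i.e.\ $d_M\le d_0-1$, assembles $e(\cN_\s)$ as $e(\cN_{\s_m^+})$ plus $\sum_{d_M}\big(e(\cN_{\scp})-e(\cN_{\scm})\big)$.

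The main obstacle is converting this finite sum over $d_M$ into the single $\coeff_{x^0}$ expression of the statement. Writing the symmetric-product factor as $\coeff_{x^0}\frac{(1+ux)^g(1+vx)^g\,x^{d_M}}{(1-x)(1-uvx)x^{d_1-d_2}}$ isolates an $x^{d_M}$, and the identities $(uv)^{r_+}x^{d_M}=(uv)^{d_1-d_2}\big((uv)^{-1}x\big)^{d_M}$ and $(uv)^{r_-}x^{d_M}=(uv)^{g-1-d_1}\big((uv)^2x\big)^{d_M}$ turn the two summands into geometric series, producing the factors $\tfrac{1}{1-(uv)^{-1}x}$ and $\tfrac{1}{1-(uv)^2x}$ visible in the formula. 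Making the summation bounds, the base-case term $e(M(2,d_1))$, and the coefficient-extraction bookkeeping agree exactly—and dealing with the range $\mu_1-\mu_2\le 2g-2$ where Proposition \ref{prop:moduli-small} needs care—is the delicate part of the argument.
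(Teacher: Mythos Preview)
The paper does not prove this statement here; it is quoted from \cite{MOV1} and closed with a $\Box$. But your strategy differs from the one the paper uses for the analogous rank-$(3,1)$ computation (Theorem~\ref{thm:main}), and that difference is exactly what produces the difficulties you flag at the end.

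You propose to anchor the telescoping sum at $\s_m^+$, using Proposition~\ref{prop:moduli-small} to describe $\cN_{\s_m^+}$ as a projective bundle over $M(2,d_1)\times\Jac^{d_2}X$, and then add the flip contributions upward. The paper (and \cite{MOV1}) run the telescope in the opposite direction: since $\cN_\s=\emptyset$ for $\s>\s_M$, one writes
\[
e(\cN_\s)=\sum_{\s_c>\s}\big(e(\cN_{\s_c^-})-e(\cN_{\s_c^+})\big)=-\sum_{d_M\ge d_0}\big(e(\cS_{\s_c^+})-e(\cS_{\s_c^-})\big),
\]
with no base term at all. This buys three things. First, the extraneous hypothesis $\GCD(2,d_1)=1$ you introduce is unnecessary: the formula is valid for all $d_1$, and your argument as written does not cover $d_1$ even. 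Second, the restriction $\mu_1-\mu_2>2g-2$ in Proposition~\ref{prop:moduli-small} never enters. Third, and more seriously, you are using $e(M(2,d_1))$ as an \emph{input}, whereas in the logic of \cite{MOV1} the Hodge polynomial of $M(2,d_1)$ is a \emph{consequence} of this theorem (via the map in Proposition~\ref{prop:moduli-small}); your argument is circular on this point.

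Your description of each individual flip is correct and matches what the paper does in the proof of Proposition~\ref{prop:polinomio(2,1)critico}: Proposition~\ref{prop:locus1} applies because the quotient rank-$(1,1)$ triple is injective (Lemma~\ref{lem:H2=0}), the base is $\Jac^{d_M}X\times\Jac^{d_2}X\times\Sym^{d_1-d_M-d_2}X$, and the fibre ranks are $d_1-d_2-d_M$ and $g-1-d_1+2d_M$. The geometric-series manipulation you sketch is also the right mechanism for producing the $\tfrac{1}{1-(uv)^{-1}x}$ and $\tfrac{1}{1-(uv)^2x}$ factors. The only substantive fix needed is to reverse the direction of the sum, starting from the empty space above $\s_M$; then all of your ``delicate'' points disappear.
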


We also can give the formula for the Hodge polynomial of the
moduli space of $\s$-stable triples when $\s>\s_m$ is a critical
value. Note however that this moduli space is smooth but
non-compact, so the Poincar\'{e} polynomial is not recovered from the
Hodge polynomial.

\begin{proposition} \label{prop:polinomio(2,1)critico}
Let $\s_c=3 \bar{d}_M-d_1-d_2>\s_m$ be a critical value. Then the
Hodge polynomial of the stable part $\cN_{\s_c}^s$ is
 $$
 \begin{aligned}
    e(\cN_{\s_c}^s)=
  \coeff_{x^0}
  \left[\frac{(1+u)^{2g}(1+v)^{2g}(1+xu)^{g}(1+xv)^{g}}{(1-uv)(1-x)(1-uv x)x^{d_1-d_2-\bar{d}_M}}
    \left(\frac{(uv)^{d_1-d_2-\bar{d}_M}}{1-(uv)^{-1}x} -
    \frac{(uv)^{-d_1+g+1+2\bar{d}_M}x}{1-(uv)^2x} -1 \right)\right] .
 \end{aligned}
 $$
\end{proposition}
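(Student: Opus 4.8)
The plan is to compare the moduli space at the critical value $\s_c$ with the moduli space just above it, $\cN_{\scp}$, whose Hodge polynomial is already given by Theorem~\ref{thm:polinomiono(2,1)no-critico}. Taking $\s=\s_c+\epsilon$ noncritical with $d_0=\big[\frac13(\s_c+\epsilon+d_1+d_2)\big]+1=\bar{d}_M+1$ (since $\s_c=3\bar{d}_M-d_1-d_2$ forces $\frac13(\s_c+d_1+d_2)=\bar{d}_M$ to be an integer, and adding $\epsilon$ bumps the integer part up by one), I would write down $e(\cN_{\scp})$ explicitly from that theorem. The key relation is set-theoretic: $\cN_{\scp}$ decomposes as the disjoint union of its stable locus $\cN_{\scp}^s$ and the flip locus $\Smas$, while $\cN_{\s_c}^s$ is the stable locus common to both sides, so $\cN_{\s_c}^s=\cN_{\scp}^s=\cN_{\scp}\setminus\Smas$. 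By the additivity of Hodge polynomials (Theorem~\ref{thm:Du}) this gives
$$
e(\cN_{\s_c}^s)=e(\cN_{\scp})-e(\Smas)\, .
$$

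First I would identify the flip locus $\Smas$ for type $(2,1,d_1,d_2)$. At a critical value the destabilizing subtriples have $r=2$ terms in the Jordan--H\"older filtration (the $(2,1)$ case is too small for $r=3$), so Proposition~\ref{prop:locus1} applies directly: $\Smas$ is the projectivization of a rank $-\chi(T'',T')$ bundle over $\cN_{\s_c}'^{,s}\x\cN_{\s_c}''^{,s}$, where the subtriple/quotient types are determined by $\bar{d}_M$. For rank $(2,1)$ the relevant splitting is a line subbundle together with the induced structure, so one factor is (essentially) a Jacobian and the other a moduli space of lower rank; in fact both factors should reduce to Jacobians or symmetric products whose Hodge polynomials are recorded in the list of Section~\ref{sec:virtual}. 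I would compute $-\chi(T'',T')$ from Proposition~\ref{prop:chi(T'',T')} in terms of $\bar{d}_M,d_1,d_2,g$, obtaining the dimension of the projective fibers, and then use the projective-bundle formula $e(\PP^{N-1})=\frac{1-(uv)^N}{1-uv}$ together with $e(\Smas)=e(\PP^{N-1})\,e(\text{base})$.

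The main obstacle I expect is bookkeeping: matching the explicit generating-function (coefficient-extraction) form of $e(\cN_{\scp})$ against the separately-computed $e(\Smas)$ so that their difference collapses into the single $\coeff_{x^0}$ expression claimed, in particular producing the extra $-1$ inside the bracket. The strategy is to rewrite $e(\Smas)$ itself as a $\coeff_{x^0}$ expression of the same shape (using that the base's Hodge polynomial is already a coefficient extraction, e.g. a symmetric-product contribution), so that subtraction happens termwise under the $\coeff_{x^0}$ operator. Concretely, the two fractional terms $\frac{(uv)^{d_1-d_2-\bar{d}_M}}{1-(uv)^{-1}x}$ and $\frac{(uv)^{-d_1+g+1+2\bar{d}_M}x}{1-(uv)^2x}$ should come from $e(\cN_{\scp})$ with $d_0=\bar{d}_M+1$ shifted appropriately (note the numerator $x$ and the shift from $2d_0$ to $1+2\bar{d}_M$ in the exponent), while the residual constant $-1$ encodes $-e(\Smas)$ after the projective-bundle and base contributions cancel against the part of $e(\cN_{\scp})$ supported on the flip locus. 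The delicate point is verifying that the exponent shifts in $(uv)$ and the single power of $x$ are exactly those forced by $-\chi(T'',T')$ and by the replacement $d_0\rightsquigarrow\bar{d}_M+1$; once that arithmetic is checked, the identity follows formally.
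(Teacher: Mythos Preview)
Your approach is correct and essentially mirrors the paper's own proof, with one harmless choice reversed: the paper works from the \emph{lower} side, using
\[
\cN_{\s_c}^s=\cN_{\scm}-\Smenos,
\]
takes $d_0=\bar d_M$ in Theorem~\ref{thm:polinomiono(2,1)no-critico}, and subtracts the Hodge polynomial of $\Smenos$ (a $\PP^{2\bar d_M-d_1+g-2}$-bundle over $\Jac X\times\Jac X\times\Sym^{d_1-\bar d_M-d_2}X$, with fiber dimension coming from $-\chi(T'',T')=2\bar d_M-d_1+g-1$). You instead take the $\scp$ side with $d_0=\bar d_M+1$ and subtract $e(\Smas)$, where now $-\chi(T'',T')=d_1-d_2-\bar d_M$; the algebra collapses to the same formula after the shift $x^{d_1-d_2-\bar d_M-1}\mapsto x^{d_1-d_2-\bar d_M}$. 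Both routes are of equal difficulty; the paper's choice has the slight cosmetic advantage that the denominator power of $x$ already matches the target, so no reindexing is needed.

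One small correction: your chain $\cN_{\s_c}^s=\cN_{\scp}^s=\cN_{\scp}\setminus\Smas$ contains a slip, since $\cN_{\scp}^s=\cN_{\scp}$ for non-critical $\scp$ and this is strictly larger than $\cN_{\s_c}^s$. The identity you actually need (and use) is just $\cN_{\s_c}^s=\cN_{\scp}\setminus\Smas$, which is correct.
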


\begin{proof}
From the Definition \ref{def:flip-loci}, we easily get that
$\cN_{\s_c}^s=\cN_{\scm}-\cS_{\scm}$, so
  $$
    e(\cN_{\s_c}^s)= e(\modulimenos)- e(\Smenos).
  $$
By part (2) in the proof of \cite[Lemma 6.1]{MOV1} (alternatively,
use Proposition \ref{prop:locus1}), $\cS_{\scm}$ is the
projectivization of a rank $-\chi(T'',T')$ bundle over
$\cN_{\s_c}'\x \cN_{\s_c}''$, where
\begin{align*}
    \cN'_{\s_c} &= \cN_{\s_c}(1,1,d_1-\bar{d}_M,d_2) =\Jac^{d_2}X \x
    \Sym^{d_1-\bar{d}_M-d_2}X\, , \\
    \cN''_{\s_c} &= \cN_{\s_c}(1,0,\bar{d}_M,0)= \Jac^{\bar{d}_M}X \, ,\\
    -\chi(T'',T') &=2\bar{d}_M-d_1+g-1\, .
\end{align*}
Therefore,
 $$
 \begin{aligned}
  e(\Smenos)
    &= e(\Jac \, X)^2 e(\Sym^{d_1-d_2-\bar{d}_M} X) e(\PP^{2\bar{d}_M-d_1+g-2}) \\
    &= (1+u)^{2g}(1+v)^{2g} \frac{1-(uv)^{2\bar{d}_M-d_1+g-1}}{1-uv}
    \coeff_{x^0} \frac{(1+ux)^{g}(1+vx)^g}
     {(1-x)(1-uvx) x^{d_1-d_2-\bar{d}_M}} \, .
 \end{aligned}
 $$

Also $e(\modulimenos)$ is computed by taking
$\s=\scm=\sigma_c-\epsilon$ ($\epsilon>0$ small) into the formula
of Theorem \ref{thm:polinomiono(2,1)no-critico}, with
$d_0=\Big[\frac13(\s+d_1+d_2)\Big]+1=\bar{d}_M$. Substracting both
terms, we get
 $$
 \begin{aligned}
    e(\cN_{\s_c}^s)=
  \coeff_{x^0}
  \Bigg[ & \frac{(1+u)^{2g}(1+v)^{2g}(1+xu)^{g}(1+xv)^{g}}{(1-uv)(1-x)(1-uv
  x)x^{d_1-d_2-\bar{d}_M}} \cdot
  \\
    & \cdot \Bigg(\frac{(uv)^{d_1-d_2-\bar{d}_M}}{1-(uv)^{-1}x}-
    \frac{(uv)^{-d_1+g-1+2\bar{d}_M}}{1-(uv)^2x}-(1-(uv)^{2\bar{d}_M-d_1+g-1}) \Bigg)\Bigg]
    ,
 \end{aligned}
 $$
and rearranging we get the stated result.
\end{proof}

Let $M(2,d)$ denote the moduli space of polystable vector bundles
of rank $2$ and degree $d$ over $X$. As $M(2,d)\cong M(2,d+2k)$,
for any integer $k$, there are two moduli spaces, depending on
whether de degree is even or odd. The Hodge polynomial of the
moduli space of rank $2$ odd degree stable bundles is given in
\cite{BaR,EK,MOV1}.

\begin{theorem}\label{thm:rank2odd}
The Hodge polynomial of $M(2,d)$ with odd degree $d$, is
 $$
    e(M(2,d)) = \frac{(1+u)^{g}(1+v)^g(1+u^2v)^{g}(1+uv^2)^g
    -(uv)^{g}(1+u)^{2g}(1+v)^{2g}}
    {(1-uv)(1-(uv)^2)}\,.
 $$ \hfill $\Box$
\end{theorem}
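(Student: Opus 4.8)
The plan is to read off $e(M(2,d))$ from the moduli of triples of type $(2,1,d_1,d_2)$ at the smallest value of the parameter, in the setting of Section~\ref{sec:Poincare(2,1)}. Since $M(2,d)\cong M(2,d+2)$ (tensor with a fixed line bundle), $e(M(2,d))$ depends only on the parity of $d$, so I may fix any odd $d_1=d$ together with any $d_2$ satisfying $\mu_1-\mu_2=\tfrac{d_1}{2}-d_2>2g-2$. With $d_1$ odd one has $\GCD(2,d_1)=1$, so $M(2,d_1)=M^s(2,d_1)$, while $\GCD(1,d_2)=1$ is automatic.

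First I would apply Proposition~\ref{prop:moduli-small}: at $\s=\smp$ the map $T=(E_1,E_2,\phi)\mapsto(E_1,E_2)$ makes $\cN_{\smp}(2,1,d_1,d_2)$ a projective bundle over $M(2,d_1)\x\Jac^{d_2}X$ with fiber $\PP^N$, where $N=d_1-2d_2-2g+1$. Combining the fiber-bundle formula (item~(5) of the list in Section~\ref{sec:virtual}) with $e(\Jac^{d_2}X)=(1+u)^g(1+v)^g$ and $e(\PP^N)=\frac{1-(uv)^{N+1}}{1-uv}$ gives
$$
e(\cN_{\smp})=\frac{1-(uv)^{\,d_1-2d_2-2g+2}}{1-uv}\,(1+u)^g(1+v)^g\,e(M(2,d_1)),
$$
so $e(M(2,d_1))$ follows by division once the left-hand side is computed.

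Second, I would compute $e(\cN_{\smp})$ from the closed formula of Theorem~\ref{thm:polinomiono(2,1)no-critico}, valid at the non-critical value $\s=\smp$. Here $\s+d_1+d_2\approx\tfrac32 d_1$, so $d_0=\big[\tfrac13(\s+d_1+d_2)\big]+1=\tfrac{d_1+1}{2}$ (using that $d_1$ is odd), and the two exponents in that theorem simplify to $d_1-d_2-d_0=\tfrac{d_1-1}{2}-d_2$ (call this $k$) and $-d_1+g-1+2d_0=g$.

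The technical heart is the $\coeff_{x^0}$ extraction, i.e.\ computing $\coeff_{x^k}$ of $H(x)=\frac{(1+ux)^g(1+vx)^g}{(1-x)(1-wx)}\big(\frac{w^{k}}{1-w^{-1}x}-\frac{w^{g}}{1-w^{2}x}\big)$, where $w=uv$. For $k$ large (arranged by taking $d_2$ negative, which is harmless since only the parity of $d_1$ matters) this coefficient equals minus the sum of the residues of $H(x)/x^{k+1}$ at the four finite nonzero poles $x\in\{1,\,w^{-1},\,w,\,w^{-2}\}$, the residue at infinity vanishing; equivalently one may split into symmetric products via $\frac{(1+ux)^g(1+vx)^g}{(1-x)(1-wx)}=\sum_{j\ge0}e(\Sym^jX)\,x^j$ and use $e(\Sym^jX)=\frac{(1+u)^g(1+v)^g}{1-w}\big(1-w^{\,j-g+1}\big)$ for $j\ge 2g-1$. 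After dividing by $\frac{1-w^{N+1}}{1-w}(1+u)^g(1+v)^g$, the $k$-dependent terms assemble into the factor $\frac{1-w^{N+1}}{1-w}$ and cancel, leaving $\frac{(1+u)^g(1+v)^g(1+u^2v)^g(1+uv^2)^g-w^g(1+u)^{2g}(1+v)^{2g}}{(1-uv)(1-(uv)^2)}$, as claimed. I expect the main obstacle to be precisely this residue bookkeeping and the verification of the cancellation. One caveat: since Theorem~\ref{thm:polinomiono(2,1)no-critico} is itself proved from the same family of triples, the argument is genuinely non-circular only when that formula is obtained independently --- for instance by starting in the large-$\s$ range $\s=\s_M^-$, where $\cN_{\s_M^-}$ admits a direct description, and flipping down as in \cite{Th,MOV1}; taking the formula as given, the division above is the most economical route.
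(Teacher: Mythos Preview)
The paper does not prove this theorem: it is quoted as a known result from \cite{BaR,EK,MOV1} and closed immediately with a $\Box$. There is no ``paper's own proof'' to compare against.

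That said, your approach is precisely the one used in \cite{MOV1} (one of the cited sources): compute $e(\cN_{\smp}(2,1,d_1,d_2))$ from the wall-crossing formula of Theorem~\ref{thm:polinomiono(2,1)no-critico}, then divide out the Jacobian and projective-space factors coming from Proposition~\ref{prop:moduli-small}. Your identification of $d_0=\tfrac{d_1+1}{2}$ and the two exponents $k=\tfrac{d_1-1}{2}-d_2$ and $g$ is correct, and the residue computation you sketch is exactly how the division is carried out in \cite{MOV1}. Your caveat about circularity is well taken and correctly resolved: in \cite{MOV1} the formula of Theorem~\ref{thm:polinomiono(2,1)no-critico} is obtained by starting from the large-$\s$ end $\s_M^-$ (where $\cN_{\s_M^-}$ is described directly via the Abel--Jacobi stratification, with no reference to $M(2,d)$) and flipping down, so no circularity arises.
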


The Hodge polynomial of the moduli space of rank $2$ even degree
stable bundles is computed in \cite{MOV2}. Note that this moduli
space is smooth but non-compact.

\begin{theorem}[{\cite[Theorem A]{MOV2}}]\label{thm:rank2even}
The Hodge polynomial of $M^s(2,d)$ with even degree $d$, is
 \begin{align*}
   e(M^s(2,d))=\, &
  \frac{1}{2(1-uv)(1-(uv)^2)} \bigg( 2(1+u)^{g}(1+v)^g(1+u^2v)^{g}(1+uv^2)^g   \\
  &-
  (1+u)^{2g}(1+v)^{2g} (1+ 2 u^{g+1} v^{g+1} -u^2v^2) -
  (1-u^2)^g(1-v^2)^g(1-uv)^2  \bigg) \, .
 \end{align*}
\end{theorem}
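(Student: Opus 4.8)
The plan is to compute $e(M^s(2,d))$ by the Harder--Narasimhan/Atiyah--Bott--Desale--Ramanan recursion, carried out throughout at the level of (virtual) Hodge polynomials and organised by the additivity of Theorem~\ref{thm:Du}. The starting point is the Hodge polynomial $B_2$ of the ``space of all rank~$2$ degree~$d$ bundles'': the stack of degree~$k$ line bundles is a $\CC^*$-gerbe over $\Jac$, so its virtual class is $(1+u)^g(1+v)^g/(uv-1)$ independently of $k$, and the Siegel/Atiyah--Bott mass formula then expresses $B_2$ (which is independent of $d$) as an explicit rational function of $u,v$ built from the motivic zeta function $Z_X(t)=\sum_n e(\Sym^nX)\,t^n=\frac{(1+ut)^g(1+vt)^g}{(1-t)(1-uvt)}$ of the curve, cf.\ item~(3) above.

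Next I would remove the unstable strata. Every non-semistable rank~$2$ bundle has a canonical maximal destabilising sub-line-bundle $L_1$ of degree $d_1>d/2$ with line bundle quotient $L_2$ of degree $d-d_1$, and the associated stratum is the family of extensions $0\to L_1\to E\to L_2\to 0$ over $\Jac^{d_1}\times\Jac^{d-d_1}$. The relevant $\HH^0$ vanishes and the extension spaces have locally constant dimension $-\chi(L_2,L_1)=g-1+d-2d_1$, so the stratum contributes $\frac{(1+u)^{2g}(1+v)^{2g}}{(uv-1)^2}(uv)^{g-1+d-2d_1}$; summing this geometric series over $d_1>d/2$ and subtracting from $B_2$ gives the class $B_2^{ss}$ of the semistable stack. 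As a check, for odd $d$ one has semistable $=$ stable, the stack is a $\CC^*$-gerbe over $M(2,d)$, and $e(M(2,d))=(uv-1)B_2^{ss}$ should reproduce Theorem~\ref{thm:rank2odd}.

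For even $d=2e$ the semistable stack is \emph{not} a gerbe over the stable moduli space: the strictly semistable bundles contribute a separate stacky class $B_2^{sss}$, so that
\[
B_2^{ss}=\tfrac{1}{uv-1}\,e(M^s(2,d))+B_2^{sss},\qquad\text{hence}\qquad
e(M^s(2,d))=(uv-1)\bigl(B_2^{ss}-B_2^{sss}\bigr).
\]
I would compute $B_2^{sss}$ by stratifying the strictly semistable bundles exactly as in the flip-loci analysis of Propositions~\ref{prop:locus4}--\ref{prop:locus5}: the split sums $L_1\oplus L_2$ with $L_1\ncong L_2$, the split sums $L\oplus L$ (whose automorphisms jump from $\CC^*$ to $\GL(2,\CC)$, yielding a Grassmannian contribution), and the non-split self-extensions of a fixed $L$. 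The $\ZZ_2$ interchanging $L_1$ and $L_2$ is treated by the quotient formula of item~(7) applied to $M=\Jac^e$, and this is precisely what produces the factor $\tfrac12$ and the summand $(1-u^2)^g(1-v^2)^g$ in the statement.

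The main obstacle is this last step. Because the automorphism groups jump (from $\CC^*$ to $\CC^*\times\CC^*$ and to $\GL(2,\CC)$) and a strictly semistable bundle may admit more than one sub-line-bundle of slope $e$, one must weight each $S$-equivalence class with its correct stacky multiplicity rather than overcounting the non-split extensions, which are $S$-equivalent but not isomorphic to the split ones. This bookkeeping --- equivalently, the discrepancy between the coarse locus $\Sym^2\Jac^e=(\Jac^e\times\Jac^e)/\ZZ_2$ and the genuine strictly semistable stratum --- is exactly the feature distinguishing the non-compact even-degree case from the smooth projective odd-degree one, and it is where all the subtlety of the formula resides. Once $B_2^{sss}$ is determined, the stated Hodge polynomial follows from the displayed identity by elementary algebra, and specialising $u=v=t$ gives the associated virtual Poincar\'e polynomial.
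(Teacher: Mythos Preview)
This theorem is not proved in the present paper; it is quoted verbatim as Theorem~A of \cite{MOV2}. There the result is obtained through the machinery of holomorphic triples, this time of rank $(2,2)$: one analyses the flip loci of the moduli spaces $\cN_\s(2,2,d_1,d_2)$ across critical values and then reads off $e(M^s(2,d))$ from the fibration of Proposition~\ref{prop:moduli-small}, in direct analogy with the computation of $e(M(3,d))$ via rank-$(3,1)$ triples carried out in Section~\ref{sec:rank3stable} of this paper.

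Your route is genuinely different: the Atiyah--Bott/Harder--Narasimhan recursion at the level of virtual Hodge classes of stacks, in the spirit of del~Ba\~no \cite{BaR} or Behrend--Dhillon. Conceptually this is more direct, since it bypasses the auxiliary moduli of triples entirely and works uniformly in rank; what the triples approach buys instead is that every step is a concrete stratification of an honest quasi-projective variety, so one never has to invoke classes of Artin stacks with non-reductive or positive-dimensional stabilisers.

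That said, what you have written is a plan rather than a proof. You have correctly isolated the crux --- the strictly semistable contribution $B_2^{sss}$ --- and correctly listed its pieces (non-split extensions $L_1\to E\to L_2$ with $L_1\not\cong L_2$, split $L_1\oplus L_2$, non-split self-extensions of $L$, and $L\oplus L$), together with the $\ZZ_2$ symmetrisation that produces the $(1-u^2)^g(1-v^2)^g$ term via item~(7). But you do not actually sum these four stacky classes, each weighted by the inverse class of its automorphism group ($\CC^*$, $(\CC^*)^2$, $\CC^*\times\CC$, $\GL(2,\CC)$ respectively), nor do you carry out the ``elementary algebra'' that is supposed to yield the stated formula from $(uv-1)(B_2^{ss}-B_2^{sss})$. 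Until that computation is written down and checked, the argument is incomplete precisely at the one place where the even-degree case differs from the odd.
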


%

\section{Critical values for triples of rank $(3,1)$}\label{sec:critical(3,1)}

Now we move to the analysis of the moduli spaces of
$\s$-polystable triples of rank $(3,1)$. Let $\cN_\s=
\cN_\s(3,1,d_1,d_2)$. By Proposition \ref{prop:alpha-range}, $\s$
takes values in the interval
 $$
 I=[\s_m,\s_M]=
 [\mu_1-\mu_2,3(\mu_1-\mu_2)]=[\frac{d_1}{3}-d_2, d_1-3d_2] \, .
 $$
Otherwise $\cN_\s$ is empty. Note that if $d_1-3d_2<0$, then $I$
is empty.

\begin{theorem} \label{thm:moduli}
For $\s\in I$, $\cN_\s$ is a projective variety of dimension $7g-6+d_1-3d_2$.
It is smooth at any $\s$-stable point. For non-critical values of $\s$,
we have $\cN_\s=\cN_\s^s$.
\end{theorem}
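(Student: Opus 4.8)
The plan is to establish the four assertions of Theorem~\ref{thm:moduli} by specializing the general theory of triples developed in Sections~\ref{sec:stable-triples} and~\ref{sec:flip-loci} to the case $(n_1,n_2)=(3,1)$, and by invoking the dimension and smoothness results already available. First I would address projectivity and the identity $\cN_\s=\cN_\s^s$ for non-critical $\s$. Projectivity is immediate from Proposition~\ref{prop:alpha-range}. For the equality of the stable and polystable moduli at non-critical parameters, I would apply part~(3) of Proposition~\ref{prop:triples-critical-range}: the condition $\GCD(n_1,n_2,d_1+d_2)=\GCD(3,1,d_1+d_2)=1$ holds automatically because $\GCD(3,1)=1$, so $\s$-semistability coincides with $\s$-stability, forcing $\cN_\s=\cN_\s^s$.

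Next I would compute the dimension. By part~(3) of Theorem~\ref{thm:smoothdim}, the dimension at a smooth $\s$-stable point is
$$
(g-1)(n_1^2+n_2^2-n_1n_2)-n_1d_2+n_2d_1+1.
$$
Substituting $n_1=3$, $n_2=1$ gives $(g-1)(9+1-3)-3d_2+d_1+1=7(g-1)+d_1-3d_2+1=7g-6+d_1-3d_2$, matching the claimed value. So the dimension statement reduces to verifying this arithmetic together with the fact that the moduli space is smooth at stable points, which is the remaining assertion.

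For smoothness at $\s$-stable points, I would use parts~(1) and~(2) of Theorem~\ref{thm:smoothdim}: the moduli space is smooth near $T$ provided $\HH^2(T,T)=0$. The cleanest route is to show that any $\s$-stable triple $T=(E_1,E_2,\phi)$ of type $(3,1,d_1,d_2)$ is an \emph{injective} triple, i.e.\ that $\phi\colon E_2\to E_1$ is injective as a sheaf map; then Lemma~\ref{lem:H2=0} gives $\HH^2(T,T)=0$ directly, and smoothness follows from part~(4) of Theorem~\ref{thm:smoothdim}. Since $E_2$ is a line bundle, $\phi$ fails to be injective only if $\phi=0$. But if $\phi=0$ then $T'=(0,E_2,0)$ is a proper subtriple with $\lambda'=1>\lambda=\tfrac14$, and one checks that for $\s$ in the relevant range this subtriple violates $\mu_\s(T')<\mu_\s(T)$, contradicting $\s$-stability; hence $\phi$ is injective whenever $T$ is $\s$-stable.

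I expect the main obstacle to be the last step: ruling out $\phi=0$ cleanly for \emph{all} admissible $\s$, rather than just generically. One must confront the subtriple $(0,E_2,0)$ (and possibly subtriples supported entirely in $E_1$) and compare $\s$-slopes carefully, since the inequality depends on the interval $I=[\tfrac{d_1}{3}-d_2,\,d_1-3d_2]$; a triple with $\phi=0$ is a direct sum $(E_1,0,0)\oplus(0,E_2,0)$ and can never be $\s$-stable, so this should go through, but the slope bookkeeping for the sub-bundles of $E_1$ is where care is needed. Once injectivity is secured, everything else is a direct application of the cited results.
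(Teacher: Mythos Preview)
Your proposal is correct and follows the same route as the paper: projectivity from Proposition~\ref{prop:alpha-range}, the equality $\cN_\s=\cN_\s^s$ from Proposition~\ref{prop:triples-critical-range}(3) (since $\GCD(3,1,d_1+d_2)=1$), and smoothness and dimension from Theorem~\ref{thm:smoothdim}. The paper's own proof is in fact terser than yours---it cites only Proposition~\ref{prop:alpha-range} and Theorem~\ref{thm:smoothdim} and leaves both the $\GCD$ check and the injectivity verification implicit---so you have filled in details the paper omits.

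Your anticipated ``main obstacle'' is not a real one. Once you know $E_2$ is a line bundle, $\phi$ is injective as a sheaf map if and only if $\phi\neq 0$ (a nonzero map out of a line bundle on a curve has zero kernel). And if $\phi=0$ then $T=(E_1,0,0)\oplus(0,E_2,0)$ is decomposable, hence never $\s$-stable; no further slope analysis of sub-bundles of $E_1$ is needed for this step. So Theorem~\ref{thm:smoothdim}(4) applies directly.
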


\begin{proof}
Projectiveness follows from Proposition \ref{prop:alpha-range}.
Smoothness and the dimension follow from Theorem
\ref{thm:smoothdim}.
\end{proof}

\begin{proposition} \label{prop:critical-values}
The critical values $\s_c$ for triples of type $(3,1,d_1,d_2)$
such that $\s_c>\s_m$ are the numbers
  $$
  \s_n=2 n -d_1-d_2, \ \ \frac23 d_1 < n \leq d_1-d_2\, , n\in \ZZ \, .
  $$
\end{proposition}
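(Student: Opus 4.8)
The plan is to apply Definition~\ref{def:critical} directly, by enumerating the numerical types $(n_1',n_2',d_1',d_2')$ of proper subtriples $T'=(E_1',E_2',\phi')\subset T$ and computing the associated value of $\s_c$ from (\ref{eqn:sigmac}). Since $E_2$ is a line bundle, any subbundle $E_2'\subseteq E_2$ is either $0$ or $E_2$, so $n_2'\in\{0,1\}$; and since $\lambda=\tfrac14$ here, the condition $n_1'n_2\neq n_1n_2'$ (equivalently $\lambda'\neq\lambda$) rules out only the types $(n_1',n_2')=(0,0)$ and $(3,1)$, neither of which is proper. Thus the types to treat are $(n_1',n_2')\in\{(1,0),(2,0),(3,0),(0,1),(1,1),(2,1)\}$, and for each I substitute $n_1=3$, $n_2=1$ into
\[
 \s_c=\frac{4(d_1'+d_2')-(n_1'+n_2')(d_1+d_2)}{n_1'-3n_2'}\, .
\]

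First I would dispose of the two \emph{full-rank} types. For $(3,0)$ the subbundle $E_1'\subseteq E_1$ has rank $3$, hence $E_1'=E_1$ and $d_1'=d_1$, giving $\s_c=\tfrac{4d_1-3(d_1+d_2)}{3}=\tfrac{d_1}{3}-d_2=\s_m$. Symmetrically, for $(0,1)$ one has $E_1'=0$ and $E_2'=E_2$, so $d_1'=0$, $d_2'=d_2$, and again $\s_c=\s_m$. Neither contributes a value strictly greater than $\s_m$. For the four remaining types a short computation puts every value in the normal form $2n-d_1-d_2$: type $(2,0)$ gives $\s_c=2d_1'-d_1-d_2$ (with $n=d_1'$), type $(1,0)$ gives $4d_1'-d_1-d_2$ (with $n=2d_1'$), type $(1,1)$ gives $d_1-d_2-2d_1'$ (with $n=d_1-d_1'$), and type $(2,1)$ gives $3d_1-d_2-4d_1'$ (with $n=2(d_1-d_1')$); these are mutually consistent under the subtriple/quotient-triple duality $(2,0)\leftrightarrow(1,1)$, $(1,0)\leftrightarrow(2,1)$. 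In particular the rank-two type $(2,0)$ already realizes $\s_c=2n-d_1-d_2$ for every integer $n$, so the set of critical values equals $\{\,2n-d_1-d_2 : n\in\ZZ\,\}$ intersected with the admissible range.

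It then remains to impose $\s_c\in I$ together with $\s_c>\s_m$. From $2n-d_1-d_2>\tfrac{d_1}{3}-d_2$ I get $2n>\tfrac43 d_1$, i.e. $n>\tfrac23 d_1$; and from $2n-d_1-d_2\le d_1-3d_2=\s_M$ I get $n\le d_1-d_2$. This yields exactly $\s_n=2n-d_1-d_2$ with $\tfrac23 d_1<n\le d_1-d_2$, as claimed.

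The delicate point, and the step I expect to require the most care, is the handling of the full-rank types $(3,0)$ and $(0,1)$: read purely numerically, (\ref{eqn:sigmac}) would permit $d_1'>d_1$ in the $(3,0)$ case and so produce spurious rational values lying in $I$ but not of the form $2n-d_1-d_2$. The resolution is that such types are not realized by any actual subtriple, since a saturated rank-$3$ subbundle of a rank-$3$ bundle must be the whole bundle (and likewise for $E_2'\subseteq E_2$); this is precisely what forces $d_1'=d_1$, $d_2'=d_2$ and collapses both cases to $\s_m$. This is the same mechanism that, for type $(2,1,d_1,d_2)$, produces Lemma~\ref{lem:dM}, and I would model the computation on that case.
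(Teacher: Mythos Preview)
Your argument is correct and follows essentially the same strategy as the paper: enumerate the possible numerical types $(n_1',n_2')$ of a proper subtriple, plug into (\ref{eqn:sigmac}), and check that every resulting value has the form $2n-d_1-d_2$. The paper organizes the enumeration by whether $T\in\cN_{\scp}$ or $T\in\cN_{\scm}$ (equivalently by the sign of $\lambda'-\lambda$) and records the four nontrivial cases as diagrams labelled (1)(a), (1)(b), (2)(a), (2)(b), a bookkeeping device that is reused in Propositions~\ref{prop:Cnodd} and~\ref{prop:Cneven}; you instead run through all six types directly and observe the subtriple/quotient duality $(2,0)\leftrightarrow(1,1)$, $(1,0)\leftrightarrow(2,1)$, which is the same content without the labels. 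Your explicit discussion of why the full-rank types $(3,0)$ and $(0,1)$ collapse to $\s_m$ (the paper disposes of $(0,1)$ by noting it would force $\phi=0$) is a nice point that the paper leaves mostly implicit.
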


\begin{proof}
  Let $\sigma_c$ be a critical value and let $T=(E,L,\phi)$ be
  a properly $\sigma_c$-semistable triple of type $(3,1,d_1,d_2)$.
  Let $T'\subset T$ be a $\sigma_c$-destabilizing triple.
  Then we have the following cases:
  \begin{enumerate}
  \item If $T$ is $\sigma_c^+$-stable (i.e. $T\in \cN_\scp$) then
  it must be that $\lambda'<\lambda$. Therefore $T'$
  must be of type $(n_1',0)$, i.e., $T'=(G,0,0)$. This gives
  $$
    \frac{d_1+d_2}{4} +\frac14 \s_c =\mu(G) \, ,
  $$
  so $\s_c=4\mu(G) -d_1-d_2$. If $T'$ is of type $(3,0)$ then
  $G=E$ and $\s_c=4\frac{d_1}{3} -d_1-d_2 =\frac{d_1}3 -d_2=\s_m$.
  Otherwise, $T'$ is of type $(2,0)$ or $(1,0)$ :
   \begin{enumerate}
   \item If $T'=(M,0,0)$, where $M$ is a line bundle of degree $d_M$, we have
   $$
\begin{array}{ccc}
 0 & \flechad{25} & M\\
 \vert &  & \vert \\ [-5pt]
 \downarrow & & \downarrow \\
 L & \stackrel{\scriptstyle\phi}{\flechad{25}} & E_1 \\
 \vert &  & \vert \\ [-5pt]
 \downarrow & & \downarrow \\
 L & \flechad{25} & F
 \end{array}
   $$
   and $\s_c=4d_M-d_1-d_2$. This corresponds to $\s_c=\s_n$ for $n=2d_M$.
   \item If $T'=(F,0,0)$, where $F$ is a rank $2$ bundle of degree $d_F$, we have
   $$
\begin{array}{ccc}
0 & \flechad{25} & F\\
  \vert &  & \vert \\ [-5pt]
  \downarrow & & \downarrow \\
  L & \stackrel{\scriptstyle\phi}{\flechad{25}} & E_1 \\
  \vert &  & \vert \\ [-5pt]
 \downarrow & & \downarrow \\
  L & \flechad{25} & L'
 \end{array}
$$
 and $\s_c=2d_F-d_1-d_2$. This corresponds to $\s_c=\s_n$ for $n=d_F$.
   \end{enumerate}
\item If $T$ is $\sigma_c^-$-stable (i.e. $T\in \cN_\scm$) then
  it must be that $\lambda'>\lambda$. Therefore $T'$
  must be of type $(n_1',1)$. If $T'$ is of type $(0,1)$ then
  it should be $\phi=0$, which is not possible. So $T'$ is of type
  $(2,1)$ or $(1,1)$. The quotient triple is of the form
  $T''=(G,0,0)$, with $G$ of rank $1$ or $2$. So
  $$
    \frac{d_1+d_2}{4} +\frac14 \s_c =\mu(G)
  $$
  and $\s_c=4\mu(G) -d_1-d_2$.
   \begin{enumerate}
   \item If $T''=(M,0,0)$, where $M$ is a line bundle of degree $d_M$, we have
   $$
\begin{array}{ccc}
 L & \flechad{25} & F\\
 \vert &  & \vert \\ [-5pt]
 \downarrow & & \downarrow \\
 L & \stackrel{\scriptstyle\phi}{\flechad{25}} & E_1 \\
 \vert &  & \vert \\ [-5pt]
 \downarrow & & \downarrow \\
 0 & \flechad{25} & M
 \end{array}
   $$
   and $\s_c=4d_M-d_1-d_2$. This corresponds to $\s_c=\s_n$ for $n=2d_M$.
   \item If $T''=(F,0,0)$, where $F$ is a rank $2$ bundle of degree $d_F$, we have
   $$
\begin{array}{ccc}
L & \flechad{25} & M\\
  \vert &  & \vert \\ [-5pt]
  \downarrow & & \downarrow \\
  L & \stackrel{\scriptstyle\phi}{\flechad{25}} & E_1 \\
  \vert &  & \vert \\ [-5pt]
 \downarrow & & \downarrow \\
  0 & \flechad{25} & F
 \end{array}
$$
and $\s_c=2d_F-d_1-d_2$. This corresponds to $\s_c=\s_n$ for
$n=d_F$.
  \end{enumerate}
  \end{enumerate}
  Finally the condition $\s_m< \s_c \leq \s_M$, i.e.
  $\frac{d_1}{3}-d_2<
  2n -d_1-d_2 \leq d_1-3d_2$, is translated into $\frac23 d_1 < n \leq d_1-d_2$.
\end{proof}

Let us compute the contribution of a critical value
$\s_n=2n-d_1-d_2 >\s_m$ (that is $n > \frac23 d_1$) to the Hodge
polynomial
 \begin{equation}\label{eqn:Cn}
  C_n:= e(\cN_\scp)-e(\cN_\scm)=e(\cS_\scp)-e(\cS_\scm) \, .
 \end{equation}

Let us introduce the following notation
 \begin{eqnarray*}
  N_1 &=& d_1 - d_2 - n\, , \\
  N_2 &=& g-1  -d_1+ 3\frac{n}2\, .
 \end{eqnarray*}

\begin{proposition} \label{prop:Cnodd}
 If $n$ is odd then $C_n$ in (\ref{eqn:Cn}) equals
  $$
  \begin{aligned}
   C_n = & (1+u)^{2g}(1+v)^{2g}
   \coeff_{x^0}\frac{(1+ux)^{g}(1+vx)^{g}}{(1-x)(1-uvx)x^{d_1-d_2-n}}\cdot
   \\ & \cdot
    \big( (uv)^{2N_2}-(uv)^{2N_1} \big)
   \cdot  \frac{(1+u^2v)^{g}(1+uv^2)^g  -(uv)^{g}(1+u)^{g}(1+v)^{g}}
    {(1-uv)^2(1-(uv)^2)}\,.
   \end{aligned}
   $$
   \end{proposition}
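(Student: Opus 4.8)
The plan is to compute $C_n$ through the flip loci and to exploit the parity of $n$ to reduce the whole analysis to the single simplest stratum. By Definition~\ref{def:flip-loci} we have $\cN_\scp-\cS_\scp=\cN_\scm-\cS_\scm$, so Theorem~\ref{thm:Du} gives $e(\cN_\scp)-e(\cS_\scp)=e(\cN_\scm)-e(\cS_\scm)$; this is exactly the second equality in (\ref{eqn:Cn}), so it suffices to compute $e(\cS_\scp)$ and $e(\cS_\scm)$ separately and subtract.

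The key observation is a parity phenomenon. For $T\in\cS_\scp$ a $\s_c$-destabilizing subtriple has $\lambda'<\lambda$, hence by Proposition~\ref{prop:critical-values} is of type $(n_1',0)$; with $\s_c=\s_n$ and $n$ odd the rank-$1$ alternative would force $n=2d_M$ to be even, so the destabilizing piece must be a rank-$2$ triple $(F,0,0)$ of type $(2,0,n,0)$. Since $n$ is odd, $\GCD(2,n)=1$, so any semistable such $F$ is stable and, crucially, cannot split into two line-bundle summands of equal slope $n/2$ (not an integer); thus the graded object always contains $(F,0,0)$ as one $\s_c$-stable piece with $F\in M(2,n)=M^s(2,n)$. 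The complementary quotient is a rank-$(1,1)$ triple $T''$ of type $(1,1,d_1-n,d_2)$, and a slope computation shows its $\s_c$-stability is equivalent to $n>\tfrac23 d_1$, which holds throughout the range; in particular its map is nonzero, $T''$ is injective, and $\cN_{\s_c}''\cong\Jac^{d_2}X\times\Sym^{d_1-n-d_2}X$. Hence the Jordan--H\"older length is forced to be $r=2$, so Propositions~\ref{prop:locus2}--\ref{prop:locus5} contribute nothing and $\cS_\scp$ is described entirely by Proposition~\ref{prop:locus1}, whose hypothesis $\HH^2(T'',T')=0$ follows from Lemma~\ref{lem:H2=0}. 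The locus $\cS_\scm$ is handled symmetrically, with $(F,0,0)$ now the quotient and $T''$ the sub.

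To assemble the answer, Proposition~\ref{prop:locus1} presents $\cS_\scp$ as a $\PP^{a^+-1}$-bundle and $\cS_\scm$ as a $\PP^{a^--1}$-bundle over the common base $M(2,n)\times\Jac^{d_2}X\times\Sym^{d_1-n-d_2}X$, with $a^+=-\chi(T''_+,T'_+)$ and $a^-=-\chi(T''_-,T'_-)$. A direct application of Proposition~\ref{prop:chi(T'',T')} yields $a^+=2(d_1-d_2-n)=2N_1$ and $a^-=2g-2-2d_1+3n=2N_2$. Multiplying the Hodge polynomials of the projective fibers, of $\Jac^{d_2}X$ and $\Sym^{d_1-n-d_2}X$, and of $M(2,n)$ for $n$ odd (Theorem~\ref{thm:rank2odd}), the difference $e(\cS_\scp)-e(\cS_\scm)$ acquires the factor $\frac{1-(uv)^{2N_1}}{1-uv}-\frac{1-(uv)^{2N_2}}{1-uv}=\frac{(uv)^{2N_2}-(uv)^{2N_1}}{1-uv}$; after pulling $(1+u)^g(1+v)^g$ out of the numerator of $e(M(2,n))$ the remaining denominators combine to $(1-uv)^2(1-(uv)^2)$, the two copies of $(1+u)^g(1+v)^g$ (from the Jacobian and from $M(2,n)$) give $(1+u)^{2g}(1+v)^{2g}$, and the symmetric-product factor supplies the $\coeff_{x^0}$ expression, reproducing the stated formula.

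The main obstacle is the structural reduction rather than the bookkeeping: one must verify that no flip stratum beyond the $r=2$ one occurs. This rests on the two sharp inputs above — that the rank-$2$ graded piece never degenerates into two line bundles (where oddness of $n$ is indispensable) and that the rank-$(1,1)$ piece is strictly $\s_c$-stable (where the strict inequality $n>\tfrac23 d_1$ defining the range is indispensable). Once these are in place, the computation of the $\chi$'s and the final algebraic simplification are routine manipulations of the formulas gathered in Section~\ref{sec:virtual}.
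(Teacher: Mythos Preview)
Your proof is correct and follows essentially the same approach as the paper: both arguments use the parity of $n$ to rule out the rank-$(1,0)$ destabilizing subtriple (since $n=2d_M$ would be even) and to force the rank-$2$ piece $F$ to be stable, reducing everything to the single $r=2$ stratum handled by Proposition~\ref{prop:locus1}, after which the computation of $-\chi(T'',T')=2N_1,2N_2$ and the assembly via Theorem~\ref{thm:rank2odd} and the symmetric-product formula proceed identically. Your write-up is in fact slightly more explicit than the paper's in justifying the $\s_c$-stability of the rank-$(1,1)$ piece (via $n>\tfrac23 d_1$) and in spelling out that $r=2$ is forced, but the underlying route is the same.
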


\begin{proof}
If $n$ is odd, then the only possible cases for properly
$\s_c$-semistable triples are those in cases (1)(b) and (2)(b) of
Proposition \ref{prop:critical-values} with $d_F=n$ odd. Then $F$
is a semistable bundle of odd degree, hence stable. So $\cS_\scp$
consists of extensions as in (1)(b). Applying Proposition
\ref{prop:locus1} (which is possible since $\HH^2(T'',T')=0$ by
Lemma \ref{lem:H2=0}), $\cS_\scp$ is the projectivization of a
bundle of rank
  $$
  -\chi(T'',T') = 2d_1-2d_F -2d_2 = 2 N_1
  $$
over
 $$
 \cN_{\s_c}'^{,s}\x \cN_{\s_c}''^{,s} = \cN_{\s_c}^s(2,0,d_F,0) \x
 \cN_{\s_c}^s(1,1,d_1-d_F,d_2)\,.
 $$
Hence
   \begin{equation}\label{eqn:a1}
   e(\cS_\scp) = e(\PP^{2N_1-1})\,
   e(\cN_{\s_c}^s(1,1,d_1-d_F,d_2))\,
   e(M(2,d_F))\, .
   \end{equation}

Analogously,  $\cS_\scm$ consists of extensions as in (2)(b).
Hence, by Proposition \ref{prop:locus1}, $\cS_\scm$ is the
projectivization of a bundle of rank
   $$
   -\chi(T'',T') = 2g-2+3d_F-2d_1 =2 N_2
   $$
over
 $$
  \cN_{\s_c}'^{,s}\x \cN_{\s_c}''^{,s} =  \cN_{\s_c}^s(1,1,d_1-d_F,d_2) \x
  \cN_{\s_c}^s(2,0,d_F,0)\, .
 $$
   Therefore
   \begin{equation}\label{eqn:a2}
   e(\cS_\scm) = e(\PP^{2N_2-1})\, e(\cN_{\s_c}^s(1,1,d_1-d_F,d_2)) \,
   e(M(2,d_F))\, .
   \end{equation}

Now recall that $d_F=n$ and note that
$\cN_{\s_c}^s(1,1,d_1-d_F,d_2) \cong \Jac^{d_2} X \x
\Sym^{d_1-d_2-n} X$ (by \cite[Lemma 3.16]{MOV1}, since $\s_c\neq
\s_m(1,1,d_1-d_F,d_2)$). Substracting (\ref{eqn:a1}) from
(\ref{eqn:a2}) we get
   $$
   \begin{aligned}
   C_n =& \, (e(\PP^{2N_1-1}) - e(\PP^{2N_2-1}))\, e(\cN_{\s_c}^s(1,1,d_1-d_F,d_2))\, e(M(2,d_F)) \\
   = & \, \frac{(uv)^{2N_2}-(uv)^{2N_1}}{1-uv} (1+u)^g(1+v)^g
 \coeff_{x^0}\frac{(1+ux)^{g}(1+vx)^{g}}{(1-x)(1-uvx)x^{d_1-n-d_2}} \\
    & \cdot  \frac{(1+u)^{g}(1+v)^g(1+u^2v)^{g}(1+uv^2)^g
    -(uv)^{g}(1+u)^{2g}(1+v)^{2g}}
    {(1-uv)(1-(uv)^2)}\,.
   \end{aligned}
   $$
  \end{proof}

\begin{proposition} \label{prop:Cneven}
 If $n$ is even  then $C_n$ in (\ref{eqn:Cn}) equals
 $$
     \begin{aligned}
  C_n =&
 (1+u)^{2g}(1+v)^{2g}
  \coeff_{x^0}\frac{(1+ux)^{g}(1+vx)^{g}}{(1-x)(1-uvx)x^{d_1-d_2-n}}
  \cdot
   \\ &
  \cdot \Bigg[\big( (uv)^{2N_2}-(uv)^{2N_1} \big)
  \frac{(1+u^2v)^{g}(1+uv^2)^g-(uv)^{g}(1+u)^{g}(1+v)^g}{(1-uv)^2(1-(uv)^2)}
  \\
  & \quad - \frac{(uv)^{g-1}(1+u)^{g}(1+v)^{g}}{(1-uv)^2(1+uv)}
    \Bigg(\frac{(uv)^{2N_1+1} (1+(uv)^{-2}x) }{1-(uv)^{-1}x} +
    \frac{(uv)^{2N_2} (1+(uv)^3x)}{1-(uv)^2x} \\
   & \qquad \qquad \qquad -\frac{(uv)^{N_1+N_2}(1+uv)(1-(uv)x^{2})}{(1-(uv)^{-1}x)(1-(uv)^2x)} \Bigg)
 \Bigg]\,.
 \end{aligned}
 $$
\end{proposition}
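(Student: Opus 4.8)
The plan is to compute $C_n=e(\cS_\scp)-e(\cS_\scm)$ by stratifying each flip locus according to the isomorphism type of the graded object $\mathrm{gr}_{\s_c}(T)$ of a properly $\s_c$-semistable triple $T$. The essential new feature for even $n$ is that the critical value $\s_n=2n-d_1-d_2$ now arises from \emph{two} kinds of destabilizing subbundles: the rank-$2$ subbundles $F$ of degree $n$ of cases (1)(b) and (2)(b), and the rank-$1$ subbundles $M$ of degree $n/2$ of cases (1)(a) and (2)(a) in Proposition \ref{prop:critical-values}. Since a $\s_c$-stable factor of $\s_c$-slope $n/2$ can only have type $(1,0,n/2,0)$, $(2,0,n,0)$, $(1,1,d_1-n,d_2)$ or $(2,1,d_1-n/2,d_2)$, the graded object must be of exactly one of three kinds: (I) $(F,0,0)\oplus T''$ with $F$ a \emph{stable} rank-$2$ bundle of degree $n$; (II) $(M,0,0)\oplus T_Q$ with $M$ a line bundle of degree $n/2$ and $T_Q$ a \emph{stable} $(2,1,d_1-n/2,d_2)$-triple; (III) $(L',L)\oplus(M_1,0,0)\oplus(M_2,0,0)$ with $r=3$. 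By Theorem \ref{thm:Du} it suffices to compute the contribution of each stratum to $e(\cS_\scp)$ and to $e(\cS_\scm)$ and subtract.

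I would first dispatch strata (I) and (II) by Proposition \ref{prop:locus1}, whose hypothesis $\HH^2=0$ holds by Lemma \ref{lem:H2=0} (the triple in the first slot is either injective---the $(1,1)$- and $(2,1)$-factors have a nonzero, hence injective, map---or has vanishing $E_2$-component). Computing the Euler characteristics from Proposition \ref{prop:chi(T'',T')} gives $-\chi=2N_1$ (resp.\ $2N_2$) for (I) and $-\chi=g-1+N_1$ (resp.\ $g-1+N_2$) for (II), so for $\scp$ (resp.\ $\scm$) stratum (I) is a $\PP^{2N_1-1}$- (resp.\ $\PP^{2N_2-1}$-)bundle over $M^s(2,n)\x\cN_{\s_c}(1,1,d_1-n,d_2)$, and stratum (II) a $\PP^{g-2+N_1}$- (resp.\ $\PP^{g-2+N_2}$-)bundle over $\Jac^{n/2}X\x\cN^s_{\s_c}(2,1,d_1-n/2,d_2)$. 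I then substitute $e(\cN_{\s_c}(1,1,d_1-n,d_2))=(1+u)^g(1+v)^g\,e(\Sym^{N_1}X)$, the polynomial $e(M^s(2,n))$ from Theorem \ref{thm:rank2even}, and $e(\cN^s_{\s_c}(2,1,d_1-n/2,d_2))$ from Proposition \ref{prop:polinomio(2,1)critico} evaluated at $\bar d_M=n/2$, which turns its exponents into $N_1$ and $N_2+2$. This is the decisive input: the shifted denominators $1-(uv)^{-1}x$ and $1-(uv)^2x$ in the asserted formula come precisely from this $(2,1)$-moduli contribution of stratum (II), as no other stratum carries $x$-dependence beyond the basic generating function $e(\Sym^{N_1}X)$.

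Next I would treat stratum (III) through the four cases of Propositions \ref{prop:locus2}--\ref{prop:locus5}, according to whether the extension $0\to(M_1,0,0)\to(F,0,0)\to(M_2,0,0)\to0$ is split and whether $M_1\cong M_2$; the $\HH^2$-vanishing again follows from Lemma \ref{lem:H2=0}. The bases assembled from the line-bundle factors are $\Jac^{n/2}X$, $\Jac^{n/2}X\x\Jac^{n/2}X$ (with the diagonal removed where $M_1\not\cong M_2$ is imposed) and $\Sym^2(\Jac^{n/2}X)$, always crossed with $\cN_{\s_c}(1,1,d_1-n,d_2)$; the fibers are projective spaces, differences $\PP^{a-1}-\PP^{b-1}$, affine-over-projective bundles, products $\PP^{a-1}\x\PP^{b-1}$ and Grassmannians $\Gr(2,\cdot)$, with $a,b\in\{N_1,2N_1\}$ for $\scp$ and $\{N_2,2N_2\}$ for $\scm$. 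The split case with $M_1\not\cong M_2$ (Proposition \ref{prop:locus4}) forces the symmetric-product formula $e((Z\x Z)/\ZZ_2)=\tfrac12\bigl(e(Z)(u,v)^2+e(Z)(-u^2,-v^2)\bigr)$ of Section \ref{sec:virtual}, item (7), with $Z=\Jac^{n/2}X$; note that $e(\Jac^{n/2}X)(-u^2,-v^2)=(1-u^2)^g(1-v^2)^g$ is exactly the term appearing in $e(M^s(2,n))$, which is how the stratum-(III) and even-rank-$2$-bundle contributions get reconciled.

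Finally I would form $e(\cS_\scp)-e(\cS_\scm)$ and simplify, and this last step is the main obstacle. Each stratum is already a $\coeff_{x^0}$-expression---through $e(\Sym^{N_1}X)$ and through the $(2,1)$-polynomial---so all of them must be reassembled under a single $\coeff_{x^0}$. The delicate point is that the projective-bundle ranks carry the $N_1$- and $N_2$-dependent powers $(uv)^{g-1+N_1}$, etc., which interact with the factor $x^{-N_1}$ in the generating function; absorbing them by rescaling the auxiliary variable $x$ is what produces the shifted denominators and the numerators $1+(uv)^{-2}x$, $1+(uv)^3x$ and $1-(uv)x^2$ of the correction. I expect the principal part to collapse into exactly the algebraic expression of Proposition \ref{prop:Cnodd}, with the same factor $(uv)^{2N_2}-(uv)^{2N_1}$, while the remaining terms---the discrepancy between $e(M^s(2,n))$ and its odd-degree analogue together with the entire stratum (III)---assemble into the subtracted correction with prefactor $\dfrac{(uv)^{g-1}(1+u)^g(1+v)^g}{(1-uv)^2(1+uv)}$, the factor $1/(1+uv)$ being the signature of the $\ZZ_2$-quotients.
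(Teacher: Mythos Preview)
Your proposal is correct and follows essentially the same approach as the paper: your strata (I), (II), (III) correspond exactly to the paper's $X_2$, $X_1$, and $X_3$--$X_6$ respectively, with the same inputs (Theorem \ref{thm:rank2even}, Proposition \ref{prop:polinomio(2,1)critico}, and Propositions \ref{prop:locus2}--\ref{prop:locus5}) and the same final algebraic reduction. One minor imprecision: in the split case with $M_1\not\cong M_2$ (Proposition \ref{prop:locus4}), the $\ZZ_2$-action swaps not only the two copies of $\Jac^{n/2}X$ but also the two $\PP^{N_i-1}$-fibers, so item (7) of Section \ref{sec:virtual} is applied to the full $\PP^{N_i-1}$-bundle over $\Jac^{n/2}X$ (fibered over $S$) rather than to $\Jac^{n/2}X$ alone---this is why both $(1-u^2)^g(1-v^2)^g$ and $\dfrac{1-(uv)^{2N_i}}{1-(uv)^2}$ appear in that contribution.
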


\begin{proof}
  Let $\s_c=\s_n=2n-d_1-d_2$. Any triple $T\in \cS_\scp$ (resp.\  $T\in
  \cS_\scm$) sits in a non-split exact sequence $T'\to T\to T''$ such that
  $T'$ is $\s_c$-semistable, $T''$ is $\s_c$-stable and
  $\lambda'<\lambda$ (resp.\ $T'$ is $\s_c$-stable, $T''$ is $\s_c$-semistable and
  $\lambda''<\lambda$). Since $\lambda=\frac14$, it must be
  $\lambda'=0$ (resp.\ $\lambda''=0$).
  Therefore,
  we can decompose $\cS_\scp$ (resp.\ $\cS_\scm$) into $6$ disjoint algebraic locally closed
  subspaces
  $\cS_\scp=X_1^+\cup X_2^+\cup X_3^+\cup X_4^+\cup X_5^+\cup X_6^+$
  (resp.\ $\cS_\scm=X_1^-\cup X_2^-\cup X_3^-\cup X_4^-\cup X_5^-\cup X_6^-$),
  as follows
  \begin{itemize}
  \item $X_1^+$ (resp.\ $X_1^-$) consists of those extensions of type (1)(a) (resp.\ (2)(a))
  for which $T''=(F,L,\phi'')$ is a $\s_c$-stable triple (resp.\ $T'=(F,L,\phi')$ is a $\s_c$-stable triple).
  \item $X_2^+$ (resp.\ $X_2^-$) consists of those extensions of type (1)(b) (resp.\ (2)(b)) for which $F$ is a stable bundle
  of degree $d_F=n$.
  \item $X_3^+$ (resp.\ $X_3^-$) consists of those extensions of type (1)(b) (resp.\ (2)(b)) for which $F$ is a properly
  semistable bundle of degree $d_F=n$, sitting in a non-split exact sequence
  $L_1\to F\to L_2$, where $L_1\not\cong L_2$, $L_1,L_2\in \Jac^{n/2} X$.
  \item $X_4^+$ (resp.\ $X_4^-$) consists of those extensions of type (1)(b) (resp.\ (2)(b)) for which $F$ is a properly
  semistable bundle of degree $d_F=n$, sitting in a non-split exact sequence
  $L_1\to F\to L_1$, where $L_1\in \Jac^{n/2} X$.
  \item $X_5^+$ (resp.\ $X_5^-$) consists of those extensions of type (1)(b) (resp.\ (2)(b)) for which $F$ is a properly
  semistable bundle of degree $d_F=n$ of the form
  $F= L_1\oplus L_2$, where $L_1\not\cong L_2$, $L_1,L_2\in \Jac^{n/2} X$.
  \item $X_6^+$ (resp.\ $X_6^-$) consists of those extensions of type (1)(b) (resp.\ (2)(b)) for which $F$ is a properly
  semistable bundle of degree $d_F=n$ of the form
  $F= L_1\oplus L_1$, where $L_1\in \Jac^{n/2} X$.
  \end{itemize}

  We aim to compute
   $$
   \begin{aligned}
      C_n &= e(\cS_\scp)-e(\cS_\scm) \\
   & =e(\bigsqcup X_i^+)-e(\bigsqcup X_i^-) \\
   & = \sum e( X_i^+)- \sum e(X_i^-) \\
   & = \sum (e( X_i^+)- e(X_i^-) )\, .
   \end{aligned}
      $$
We shall do this by computing each of the terms in the sum above
independently:
   \begin{enumerate}
   \item We compute $e( X_1^+)- e(X_1^-)$ as follows. Proposition
   \ref{prop:locus1} implies that $X_1^+$ is the projectivization
   of a bundle of rank (with $T'$ and $T''$ as in (1)(a))
   $$
   -\chi(T'',T') = g-1 + d_1- 2 d_M -d_2 = g-1 +d_1-d_2-n = g-1 + N_1\,.
   $$
   over
   $$
   \cN_{\s_c}'^{,s} \x \cN_{\s_c}''^{,s} =
    \cN^s_{\s_c}(1,0,d_M,0) \x \cN^s_{\s_c}(2,1,d_1-d_M,d_2)\, .
   $$
   Note that here $d_M=n/2$, and the hypothesis of Proposition
   \ref{prop:locus1} is satisfied because of Lemma \ref{lem:H2=0}.
   Therefore
   \begin{equation}\label{eqn:b1}
   e(X_1^+) = e(\PP^{g-1+N_1-1})\,  e(\cN_{\s_c}^s(2,1,d_1-d_M,d_2)) \, e(\Jac^{d_M}
   X)\,.
   \end{equation}

   Analogously, $X_1^-$ is the projectivization
   of a bundle of rank (now $T'$ and $T''$ as in (2)(a))
   $$
    -\chi(T'',T') = 2g-2+3d_M- d_1 = 2g-2 -d_1 +3n/2 =g-1+N_2
   $$
   over
   $$
   \cN_{\s_c}'^{,s} \x \cN_{\s_c}''^{,s}
   =\cN^s_{\s_c}(2,1,d_1-d_M,d_2)\x
    \cN^s_{\s_c}(1,0,d_M,0) \, .
   $$
   Therefore
   \begin{equation}\label{eqn:b2}
   e(X_1^-) = e(\PP^{g-1+N_2-1})\, e(\cN_{\s_c}^s(2,1,d_1-d_M,d_2)) \, e(\Jac^{d_M}
   X)\, .
   \end{equation}

   Proposition \ref{prop:polinomio(2,1)critico} says that (using $\bar{d}_M=\frac13 ( \s_c+d_1-d_M
   +d_2) =\frac13 ( 2n-d_1-d_2 +d_1 - \frac{n}2 +d_2) = \frac{n}2$
   and $d_M=\frac{n}2$),
   \begin{equation}\label{eqn:b3}
   \begin{aligned}
    e(\cN_{\s_c}^s(2,1,d_1-d_M,d_2)) =
    \coeff_{x^0}
    \Bigg[ &\frac{(1+u)^{2g}(1+v)^{2g}(1+xu)^{g}(1+xv)^{g}}{(1-uv)(1-x)(1-uv
    x)x^{d_1-d_2-n}}\cdot
    \\ & \cdot \Bigg(\frac{(uv)^{d_1-d_2-n}}{1-(uv)^{-1}x}   -
    \frac{(uv)^{-d_1+g+1+3n/2}x}{1-(uv)^2x}-1\Bigg)\Bigg].
   \end{aligned}
   \end{equation}
   Note that $\s_c >\s_m(2,1,d_1-d_M,d_2)=(d_1-d_M)/2-d_2$ (which follows from
   $n>\frac23 d_1$), so Proposition \ref{prop:polinomio(2,1)critico} applies.

   Substracting (\ref{eqn:b2}) from (\ref{eqn:b1}) and using (\ref{eqn:b3}), we get
   $$
    \begin{aligned}
    e(X_1^+)-e(X_1^-) = & (e(\PP^{g-1+N_1-1}) - e(\PP^{g-1+N_2-1})) e(\cN_{\s_c}^s(2,1,d_1-d_M,d_2)) e(\Jac^{d_M} X) \\
      = &\frac{(uv)^{g-1} ((uv)^{N_2}-(uv)^{N_1}) }{1-uv} (1+u)^g(1+v)^g \\
    & \cdot \,
    \coeff_{x^0}
    \left[\frac{(1+u)^{2g}(1+v)^{2g}(1+xu)^{g}(1+xv)^{g}}{(1-uv)(1-x)(1-uv x)x^{d_1-d_2-n}}
    \Bigg(\frac{(uv)^{N_1}}{1-(uv)^{-1}x} -
    \frac{(uv)^{N_2+2}x}{1-(uv)^2x}-1\Bigg)\right].
    \end{aligned}
   $$

   \item We compute now $e( X_2^+)- e(X_2^-)$.
   Any $T\in X_2^+$ is a non-split extension $T'\to T \to T''$, where both
   $T'$ and $T''$ are $\s_c$-stable. Moreover, $\HH^2(T'',T')=0$ by Lemma \ref{lem:H2=0}. So
   we use Proposition \ref{prop:locus1} to get that $X_2^+$ is the
   projectivization of a fiber bundle of rank (with $T'$ and $T''$ as in (1)(b))
   $$
   -\chi(T'',T') = 2d_1-2d_F -2d_2 =2N_1
   $$
   over
   $$
   \cN_{\s_c}'^{,s} \x \cN_{\s_c}''^{,s} = M^s(2, d_F) \x \cN_{\s_c}^s (1,1,d_1-d_F,d_2)\, .
   $$
   Note that $d_F=n$. Therefore,
   \begin{equation}\label{eqn:c1}
   e(X_2^+) = e(\PP^{2N_1-1}) \, e(\cN_{\s_c}^s(1,1,d_1-d_F,d_2)) \,
   e(M^s(2,d_F))\, .
   \end{equation}

   Analogously,  $X_2^-$ is the
   projectivization of a fiber bundle of rank (now $T'$ and $T''$ as in (2)(b))
   $$
   -\chi(T'',T') = 2g-2+3d_F-2d_1 =2N_2
   $$
   over
   $$
   \cN_{\s_c}'^{,s} \x \cN_{\s_c}''^{,s} =\cN_{\s_c}^s (1,1,d_1-d_F,d_2)\x M^s(2, d_F) \, .
   $$
   Therefore,
   \begin{equation}\label{eqn:c2}
   e(X_2^-) = e(\PP^{2N_2-1}) \, e(\cN_{\s_c}^s(1,1,d_1-d_F,d_2)) \,
   e(M^s(2,d_F))\, .
   \end{equation}

   Now note that $\cN_{\s_c}^s(1,1,d_1-d_F,d_2) \cong \Jac^{d_2} X \x \Sym^{d_1-d_F-d_2}
   X$ (using \cite[Lemma 3.16]{MOV1}, since $\s_c\neq
   \s_m(1,1,d_1-d_F,d_2)$). Substracting (\ref{eqn:c2}) from (\ref{eqn:c1}) we get
   $$
    \begin{aligned}
     e(X_2^+)-e(X_2^-) =& (e(\PP^{2N_1-1}) - e(\PP^{2N_2-1}))e(\cN_{\s_c}^s(1,1,d_1-d_F,d_2)) e(M^s(2,d_F)) \\
       =& \frac{(uv)^{2N_2}-(uv)^{2N_1}}{1-uv}\,  (1+u)^g(1+v)^g \cdot \,
    \coeff_{x^0}\frac{(1+ux)^{g}(1+vx)^{g}}{(1-x)(1-uvx)x^{d_1-d_2-n}} \cdot \\
    & \cdot
    \frac{1}{2(1-uv)(1-(uv)^2)} \Big( 2(1+u)^{g}(1+v)^g(1+u^2v)^{g}(1+uv^2)^g  \\ &-
  (1+u)^{2g}(1+v)^{2g} (1+ 2 u^{g+1} v^{g+1} -u^2v^2) -
  (1-u^2)^g(1-v^2)^g(1-uv)^2 \Big)\,.
    \end{aligned}
   $$

   \item Now we compute $e(X_3^+)- e(X_3^-)$. An element $T \in
   X_3^+$ sits in a non-split extension $T'\to T\to T''$, where $T_1\to T'\to
   T_2$ is non-split, and all $T'', T_1,T_2$ are $\s_c$-stable of the same
   $\s_c$-slope, and $T_1\not\cong T_2$. Note that
   $T_i=(L_i,0,0)$, $i=1,2$, since we are in the situation of
   (1)(b). So $\lambda_1=0$, $\lambda_2=0$.
   By Lemma \ref{lem:H2=0}, we have that
   $\HH^2(T_2,T_1)=\HH^2(T'',T_2)=\HH^2(T'',T_1)=0$.
   We apply Proposition \ref{prop:locus2}, obtaining that $X_3^+$ is a bundle over
   $Y\x \cN_{\s_c}''^{,s}$, where $Y$ parametrizes the triples
   $T'$, and with fiber $\PP^{a-1}-\PP^{b-1}$, where
    $$
    \begin{aligned}
    a &= -\chi(T'',T')  = 2d_1-4d_M -2d_2 = 2N_1 \, ,\\
    b &= -\chi(T'',T_1) = d_1-2d_M -d_2 =N_1\, .
    \end{aligned}
    $$
   The space $Y$ parametrizing non-split extensions $T_1\to T'\to
   T_2$ is a projective bundle with fiber $\PP^{g-2}$, since
    $$
    -\chi(T_2,T_1)=g-1\, ,
    $$
   over
    $$
    \qquad \{ (T_1,T_2) \in \cN_{\s_c}(1,0,{n/2},0) \x\cN_{\s_c}(1,0,{n/2},0) \ |
    \ T_1\not\cong T_2 \} = (\Jac^{n/2} X \x \Jac^{n/2} X )-\Delta \, ,
    $$
   where $\Delta$ stands for the diagonal. Therefore
       \begin{equation}\label{eqn:d1}
     e(X_3^+) = (e(\PP^{2N_1-1}) - e(\PP^{N_1-1})) e(\Jac^{d_2} X) e(\Sym^{d_1-2d_M-d_2})
      e( (\Jac X\x \Jac X)-\Delta) e(\PP^{g-2}) \, .
       \end{equation}

  The case of $X_3^-$ is analogous. An element $T \in
   X_3^-$ sits in a non-split extension $T'\to T\to T''$, where $T_1\to T''\to
   T_2$ is non-split, and all $T', T_1,T_2$ are $\s_c$-stable of the same
   $\s_c$-slope, and $T_1\not\cong T_2$. Again
   $T_i=(L_i,0,0)$, $i=1,2$.
   Proposition \ref{prop:locus2} yields that $X_3^-$ is a bundle over
   $$
   \cN_{\s_c}'^{,s} \x Y =\Jac^{d_2} X \x \Sym^{d_1-2d_M-d_2} \x
   Y\, ,
   $$
   where $Y$ parametrizes the triples $T''$ (therefore $Y$ is a
   $\PP^{g-2}$-bundle over $(\Jac^{n/2} X \x \Jac^{n/2} X
   )-\Delta$), and with fiber $\PP^{a-1}-\PP^{b-1}$, where
    $$
    \begin{aligned}
    a &= -\chi(T'',T')= 2g-2 +6 d_M - 2 d_1 = 2N_2 \, ,\\
    b &= -\chi(T_2,T') = g-1 +3 d_M -d_1 =N_2 \, .
    \end{aligned}
    $$
   Therefore
     \begin{equation}\label{eqn:d2}
     e(X_3^-) = (e(\PP^{2N_2-1}) - e(\PP^{N_2-1})) e(\Jac^{n/2} X) e(\Sym^{d_1-2d_M-d_2})
      e( (\Jac X\x \Jac X)-\Delta) e(\PP^{g-2}) \, .
     \end{equation}

    Substracting (\ref{eqn:d2}) from (\ref{eqn:d1}), we get
    $$
    \begin{aligned}
    e( X_3^+)- e(X_3^-) = & (e(\PP^{2N_1-1}) - e(\PP^{N_1-1}) - e(\PP^{2N_2-1}) +
    e(\PP^{N_2-1})) \cdot \\
    & \cdot  e(\Jac^{d_2} X) e(\Sym^{d_1-2d_M-d_2})
      e(   (\Jac X \x \Jac X)-\Delta) e(\PP^{g-2}) \\
    = & \frac{(uv)^{2N_2}-(uv)^{N_2}-(uv)^{2N_1}+(uv)^{N_1}}{1-uv}
    \, (1+u)^g(1+v)^g \cdot
    \\
    & \cdot \coeff_{x^0} \frac{(1+ux)^{g}(1+vx)^{g}}{(1-x)(1-uvx)x^{d_1-d_2-n}}
    \cdot  ((1+u)^{2g}(1+v)^{2g} -(1+u)^g(1+v)^g) \,
    \frac{1-(uv)^{g-1}}{1-uv}\,.
    \end{aligned}
    $$

    \item We move on to compute $e( X_4^+)- e(X_4^-)$. An element $T\in
    X_4^+$ sits in a non-split extension $T'\to T\to T''$, where
    $T_1\to T'\to T_1$ is non-split, and $T'', T_1$ are
    $\s_c$-stable triples of the same $\s_c$-slope. The triples
    $T_1=(L_1,0,0)$ are parametrized by $\Jac^{n}X$. The triples
    $T'$ are then parametrized by a variety $Y$ which is a
    projective bundle over $\Jac^{n/2} X$ with fiber projective spaces
    $\PP^{g-1}$, since $\chi(T_1,T_1)=g$.
    By Lemma \ref{lem:H2=0}, we have that $\HH^2(T_1,T_1)=\HH^2(T'',T_1)=0$.
    By Proposition \ref{prop:locus3}, $X_4^+$ is a bundle over
    $$
    \cN_{\s_c}''^{,s} \x Y =\Jac^{d_2} X \x \Sym^{d_1-2d_M-d_2} X \x
    Y\, ,
    $$
    with fiber a $\CC^{N_1-1}$-bundle over $\PP^{N_1-1}$, as
    $$
    -\chi(T'',T_1) = d_1-2d_M -d_2 =N_1 \, .
    $$
    Therefore
    \begin{equation}\label{eqn:e1}
     e(X_4^+) = e(\CC^{N_1-1}) e(\PP^{N_1-1}) e(\Jac^{d_2} X) e(\Sym^{d_1-2d_M-d_2})
     e( \Jac^{n/2} X) e(\PP^{g-1}) \, .
     \end{equation}

    Analogously, an element $T\in
    X_4^-$ sits in a non-split extension $T'\to T\to T''$, where
    $T_1\to T''\to T_1$ is non-split, and $T', T_1$ are
    $\s_c$-stable triples of the same $\s_c$-slope. The triples
    $T''$ are parametrized by the variety $Y$ as above.
    Proposition \ref{prop:locus3} implies that $X_4^-$ is a bundle over
    $$
    Y \x \cN_{\s_c}'^{,s} = Y \x \Jac^{d_2} X \x
    \Sym^{d_1-2d_M-d_2}X\, ,
    $$
    with fiber a $\CC^{N_2-1}$-bundle over $\PP^{N_2-1}$, as
    $$
    -\chi(T_1,T') = g-1 +3 d_M -d_1 =N_2\, .
    $$
    Hence
     \begin{equation}\label{eqn:e2}
     e(X_4^-) = e(\CC^{N_2-1}) e(\PP^{N_2-1}) e(\Jac^{d_2} X) e(\Sym^{d_1-2d_M-d_2})
      e(  \Jac^{n/2} X) e(\PP^{g-1}) \, .
     \end{equation}

Combining (\ref{eqn:e1}) with (\ref{eqn:e2}) we get
    $$
    \begin{aligned}
    e( X_4^+)- e(X_4^-) = &  (e(\CC^{N_1-1}) e(\PP^{N_1-1}) -e(\CC^{N_2-1}) e(\PP^{N_2-1}))
    e(\Jac^{d_2} X) \cdot \\ &\cdot e(\Sym^{d_1-2d_M-d_2})
      e(  \Jac^{n/2} X) e(\PP^{g-1}) \\
       = & \frac{(uv)^{2N_2-1}-(uv)^{N_2-1}-(uv)^{2N_1-1}+(uv)^{N_1-1}}{1-uv}
    (1+u)^g(1+v)^g
     \\
    & \cdot \coeff_{x^0} \frac{(1+ux)^{g}(1+vx)^{g}}{(1-x)(1-uvx)x^{d_1-d_2-n}}\cdot  (1+u)^g(1+v)^g
    \frac{1-(uv)^{g}}{1-uv}\,.
    \end{aligned}
    $$

    \item We proceed with $e( X_5^+)- e(X_5^-)$. An element $T\in X^+_5$ sits in a non-split
    exact sequence $T'\to T\to T''$ where $T'=T_1\oplus T_2$, with $T_1,T_2$ non-isomorphic triples,
    and $T',T_1,T_2$ $\s_c$-stable triples of the same
    $\s_c$-slope. Here $T_i=(L_i,0,0)$ as in (1)(b). So the space
    parametrizing $T'$ is
    $$
    Y= (\Jac^{n/2} X\x \Jac^{n/2} X -\Delta)/\ZZ_2\, ,
    $$
    where $\ZZ_2$ acts by permutation $(T_1,T_2)\mapsto
    (T_2,T_1)$.

    By Lemma \ref{lem:H2=0},
    $\HH^2(T'',T_1)=\HH^2(T'',T_2)=0$. Then Proposition
    \ref{prop:locus4} implies that $X_5^+$ is a bundle over
    $S \x Y$, $S=\cN_{\s_c}''^{,s}$, whose fibers are $\PP^{a-1}\x
    \PP^{a-1}$, with
    $$
    \begin{aligned}
    a\ &=-\chi(T'',T_1)= d_1-2d_M -d_2 =N_1 \, .
    \end{aligned}
    $$
    However, this fiber bundle is not locally trivial in the
    Zariski topology, since it has monodromy around the diagonal.
    We compute the Hodge polynomial of $X_5^+$ as follows.
    Pull-back the bundle to $\widetilde{Y}=S\x (\Jac^{n/2} X\x \Jac^{n/2} X
    -\Delta)$,
    $$
 \begin{array}{ccc}
 \widetilde{X}_5^+ & \flechad{25} & X_5^+\\
 \vert &  & \vert \\ [-5pt]
 \downarrow & & \downarrow \\
 \widetilde{Y} & \flechad{25} & Y
 \end{array}
   $$
   Then $\widetilde{X}_5^+$ is a $\PP^{N_1-1}\x\PP^{N_1-1}$-bundle over $\widetilde{Y}$ and
   $X_5^+=\widetilde{X}_5^+/\ZZ_2$. More explicitly, let $Z$ be
   the projective bundle over $S\x \Jac^{n/2} X$, with fibers
   $\Ext^1(T'',T_1)$, and let $p: Z\x_S Z\to S\x \Jac^{n/2} X\x \Jac^{n/2} X$
   stand for the projection.
   Then $\widetilde{X}_5^+=p^{-1}( S\x (\Jac^{n/2} X\x \Jac^{n/2} X
    -\Delta))$. Letting $\ZZ_2$ act on $Z\x_S Z$ by permutation,
    we have an induced map $p: (Z\x_S Z)/\ZZ_2 \to (S\x \Jac^{n/2} X\x \Jac^{n/2} X)/\ZZ_2$
    and $X_5^+=p^{-1}(Y)$. Now
   \begin{equation}\label{eqn:first}
     e((Z\x_S Z)/\ZZ_2) = e(S)\cdot \frac12 \left( e(\PP^{N_1-1})^2 e(\Jac X)^2 +
     \frac{1-(uv)^{2N_1}}{1-(uv)^2} (1-u^2)^g(1-v^2)^g\right)\, ,
   \end{equation}
   using (7) of Section \ref{sec:virtual}.
   Also $p^{-1}(S\x\Delta)$ is a bundle over $S\x\Delta \cong S\x\Jac^{n/2} X$,
   whose fibers are $(\PP^{N_1-1}\x \PP^{N_1-1})/\ZZ_2$, where
   $\ZZ_2$ acts by permutation. Hence
   \begin{equation}\label{eqn:second}
    e(p^{-1}(S\x\Delta))=
    e(S)\,  e(\Jac X) \cdot \frac12 \left( e(\PP^{N_1-1})^2 +
     \frac{1-(uv)^{2N_1}}{1-(uv)^2} \right) \, ,
   \end{equation}
   using (7) of Section \ref{sec:virtual} again. We finally get
   \begin{equation}\label{eqn:f1}
     e(X_5^+)=  e((Z\x_S Z)/\ZZ_2)-e(p^{-1}(S\x\Delta)) = (\ref{eqn:first})-(\ref{eqn:second}) \, ,
   \end{equation}
   and $e(S)=e(\Jac^{d_2} X) e(\Sym^{d_1-d_F-d_2} X)$.

   There is an analogous formula for $e(X_5^-)$, obtained by
   substituting $N_1$ by $N_2$ in (\ref{eqn:f1}). So we obtain
    $$
    \begin{aligned}
    e( X_5^+)- e(X_5^-) =&  (1+u)^g(1+v)^g \,
    \coeff_{x^0} \frac{(1+ux)^{g}(1+vx)^{g}}{(1-x)(1-uvx)x^{d_1-d_2-n}} \cdot \\
    & \cdot \frac12 \left( \frac{(1-(uv)^{N_1})^2}{(1-uv)^2} (1+u)^{2g}(1+v)^{2g} +
     \frac{1-(uv)^{2N_1}}{1-(uv)^2} (1-u^2)^g(1-v^2)^g \right.\\
     & \quad   -(1+u)^g(1+v)^g  \frac{(1-(uv)^{N_1})^2}{(1-uv)^2}
     - (1+u)^g(1+v)^g\frac{1-(uv)^{2N_1}}{1-(uv)^2}  \\
     & \quad - \frac{(1-(uv)^{N_2})^2}{(1-uv)^2} (1+u)^{2g}(1+v)^{2g} -
     \frac{1-(uv)^{2N_2}}{1-(uv)^2} (1-u^2)^g(1-v^2)^g \\ &
       \quad \left. +(1+u)^g(1+v)^g  \frac{(1-(uv)^{N_2})^2}{(1-uv)^2} +
     (1+u)^g(1+v)^g\frac{1-(uv)^{2N_2}}{1-(uv)^2} \right) \\
       = &  (1+u)^g(1+v)^g \,
    \coeff_{x^0} \frac{(1+ux)^{g}(1+vx)^{g}}{(1-x)(1-uvx)x^{d_1-d_2-n}} \cdot \\
    & \cdot \frac12 \left( \frac{(uv)^{2N_1}-2(uv)^{N_1}-
    (uv)^{2N_2}+2(uv)^{N_2}}{(1-uv)^2} (
    (1+u)^{2g}(1+v)^{2g}- (1+u)^{g}(1+v)^{g}) \right.
    \\
    & \quad \left. + \frac{(uv)^{2N_2}-(uv)^{2N_1}}{1-(uv)^2} ( (1-u^2)^g(1-v^2)^g
     - (1+u)^g(1+v)^g) \right) \,.
    \end{aligned}
    $$

    \item We deal with the last case, $e( X_6^+)- e(X_6^-)$. A
    triple $T\in X_6^+$ sits in a non-split extension $T'\to T\to
    T''$, where $T'=T_1\oplus T_1$ and $T'',T_1$ are $\s_c$-stable
    triples with the same $\s_c$-slope. Here $T_1=(L_1,0,0)$ as in (1)(b).
    Using that $\HH^1(T'',T_1)=0$
    (by Lemma \ref{lem:H2=0}), Proposition \ref{prop:locus5} implies
    that $X_6^+$ is a grassmannian bundle over
     $$
     \cN_{\s_c}'' \x \Jac^{n/2} X =\Jac^{d_2} X \x \Sym^{d_1-d_F-d_2} X \x \Jac^{n}
     X\, ,
     $$
    with fibers $\Gr(2,N_1)$,
    since
    $$
    -\chi(T'',T_1)= N_1 \, .
    $$
    Hence
    $$
     e(X_6^+)=  e(\Jac^{d_2} X) e(\Sym^{d_1-d_F-d_2} X) e(\Jac^{d_M} X) e(\Gr(2,
     N_1))\, .
     $$

     Analogously,
     $$
     e(X_6^-)=  e(\Jac^{d_2} X) e(\Sym^{d_1-d_F-d_2} X) e(\Jac^{d_M} X) e(\Gr(2,
     N_2))\, ,
     $$
     and then
     $$
    \begin{aligned}
    e( X_6^+)- e(X_6^-) =& e(\Jac^{d_2} X) e(\Sym^{d_1-d_F-d_2} X) e(\Jac^{d_M} X)
    \big(e(\Gr(2, N_1))-e(\Gr(2, N_2))\big)\\
       =& (1+u)^{2g}(1+v)^{2g}
    \coeff_{x^0} \frac{(1+ux)^{g}(1+vx)^{g}}{(1-x)(1-uvx)x^{d_1-d_2-n}} \cdot \\
    & \cdot \frac{(1-(uv)^{N_1})(1-(uv)^{N_1-1}) -(1-(uv)^{N_2})(1-(uv)^{N_2-1})}{(1-uv)(1-(uv)^2)}
    \\
    =& (1+u)^{2g}(1+v)^{2g}
    \coeff_{x^0} \frac{(1+ux)^{g}(1+vx)^{g}}{(1-x)(1-uvx)x^{d_1-d_F-d_2}} \cdot \\
    & \cdot \frac{(uv)^{N_2}+(uv)^{N_2-1}-(uv)^{2N_2-1} -(uv)^{N_1}-(uv)^{N_1-1}+(uv)^{2N_1-1}}{(1-uv)(1-(uv)^2)} \,.
    \end{aligned}
    $$
\end{enumerate}

  Putting all together,
  $$
     \begin{aligned}
  C_n =&  (1+u)^g(1+v)^g
  \coeff_{x^0}\frac{(1+ux)^{g}(1+vx)^{g}}{(1-x)(1-uvx)x^{d_1-n-d_2}}
   \, \cdot
  \\ &
  \cdot \Bigg[  \frac{(uv)^{N_2}-(uv)^{N_1}}{1-uv} (uv)^{g-1}
   \frac{(1+u)^{2g}(1+v)^{2g}}{(1-uv)}
    \Bigg(\frac{(uv)^{N_1}}{1-(uv)^{-1}x} -
    \frac{(uv)^{N_2+2}x}{1-(uv)^2x}-1\Bigg) \\
    &  + \frac{(uv)^{2N_2}-(uv)^{2N_1}}{1-uv}  \cdot \frac{1} {2(1-uv)(1-(uv)^2)}
    \cdot \Big( 2(1+u)^{g}(1+v)^g(1+u^2v)^{g}(1+uv^2)^g  \\
    &\qquad -
  (1+u)^{2g}(1+v)^{2g} (1+ 2 (uv)^{g+1} -(uv)^2) -
  (1-u^2)^g(1-v^2)^g(1-uv)^2  \Big)
   \\
  &   + \frac{(uv)^{2N_2}-(uv)^{N_2}-(uv)^{2N_1}+(uv)^{N_1}}{1-uv}
  ((1+u)^{2g}(1+v)^{2g} -(1+u)^g(1+v)^g)
    \frac{1-(uv)^{g-1}}{1-uv}\\
   &  + \frac{(uv)^{2N_2-1}-(uv)^{N_2-1}-(uv)^{2N_1-1}+(uv)^{N_1-1}}{1-uv}
    (1+u)^g(1+v)^g \frac{1-(uv)^{g}}{1-uv}\\
  &   + \frac12 \Bigg( \frac{(uv)^{2N_1}-2(uv)^{N_1}-
    (uv)^{2N_2}+2(uv)^{N_2}}{(1-uv)^2} (
    (1+u)^{2g}(1+v)^{2g}- (1+u)^{g}(1+v)^{g})
    \\
    & \qquad + \frac{(uv)^{2N_2}-(uv)^{2N_1}}{1-(uv)^2} ( (1-u^2)^g(1-v^2)^g
     - (1+u)^g(1+v)^g) \Bigg) \\
   &   + (1+u)^{g}(1+v)^{g}
   \frac{(uv)^{N_2}+(uv)^{N_2-1}-(uv)^{2N_2-1} -(uv)^{N_1}-(uv)^{N_1-1}+(uv)^{2N_1-1}}{(1-uv)(1-(uv)^2)}   \Bigg]
   \,,
 \end{aligned}
 $$
 Rearranging, we get
  $$
     \begin{aligned}
  C_n =& (1+u)^{g}(1+v)^{g}
  \coeff_{x^0}\frac{(1+ux)^{g}(1+vx)^{g}}{(1-x)(1-uvx)x^{d_1-d_2-n}}
  \cdot
   \\ &
  \cdot \Bigg[  \frac{(uv)^{N_2}-(uv)^{N_1}}{1-uv} (uv)^{g-1}
   \frac{(1+u)^{2g}(1+v)^{2g}}{1-uv}
    \Bigg(\frac{(uv)^{N_1}}{1-(uv)^{-1}x} -
    \frac{(uv)^{N_2+2}x}{1-(uv)^2x}-1\Bigg) \\
 &+( (uv)^{2N_2}-(uv)^{2N_1} )\frac{(1+u)^{g}(1+v)^{g}(1+u^2v)^{g}(1+uv^2)^g
  -(uv)^{g-1}(1+u)^{2g}(1+v)^{2g}}{(1-uv)^2(1-(uv)^2)} \\
 &+( (uv)^{N_2}-(uv)^{N_1} )(1+u)^{2g}(1+v)^{2g}
   \frac{(uv)^{g-1}}{(1-uv)^2}  \Bigg] \\ 
  =& (1+u)^{2g}(1+v)^{2g}
  \coeff_{x^0}\frac{(1+ux)^{g}(1+vx)^{g}}{(1-x)(1-uvx)x^{d_1-d_2-n}}
  \cdot
   \\ &
  \cdot \Bigg[
   \frac{(uv)^{g-1}(1+u)^{g}(1+v)^{g}}{(1-uv)^2}
    \Bigg(\frac{(uv)^{N_1+N_2}-(uv)^{2N_1}}{1-(uv)^{-1}x} -
    \frac{(uv)^{2N_2+2}x-(uv)^{N_1+N_2+2}x}{1-(uv)^2x} \Bigg) \\
 &+( (uv)^{2N_2}-(uv)^{2N_1} )
 \frac{(1+u^2v)^{g}(1+uv^2)^g -
 (uv)^{g-1}(1+u)^{g}(1+v)^g}{(1-uv)^2(1-(uv)^2)} \Bigg] \\ 
  =&
(1+u)^{2g}(1+v)^{2g}
  \coeff_{x^0}\frac{(1+ux)^{g}(1+vx)^{g}}{(1-x)(1-uvx)x^{d_1-d_2-n}}
  \cdot
   \\ &
  \cdot \Bigg[
   \frac{(uv)^{g-1}(1+u)^{g}(1+v)^{g}}{(1-uv)^2}
    \Bigg(-\frac{(uv)^{2N_1}}{1-(uv)^{-1}x} -
    \frac{(uv)^{2N_2+2}x}{1-(uv)^2x}
  - \frac{(uv)^{2N_2}-(uv)^{2N_1} }{1-(uv)^2} (1-uv) \Bigg) \\
 & +
  \frac{(uv)^{g-1}(1+u)^{g}(1+v)^{g}}{(1-uv)^2}
    \Bigg(\frac{(uv)^{N_1+N_2}}{1-(uv)^{-1}x} +
    \frac{(uv)^{N_1+N_2+2}x}{1-(uv)^2x}  \Bigg) \\
    &+( (uv)^{2N_2}-(uv)^{2N_1} )
  \frac{(1+u^2v)^{g}(1+uv^2)^g-(uv)^{g}(1+u)^{g}(1+v)^g}{(1-uv)^2(1-(uv)^2)}
 \Bigg]\,,
 \end{aligned}
 $$
 hence
  $$
     \begin{aligned}
  C_n
  =&
(1+u)^{2g}(1+v)^{2g}
  \coeff_{x^0}\frac{(1+ux)^{g}(1+vx)^{g}}{(1-x)(1-uvx)x^{d_1-d_2-n}}
  \cdot
   \\ &
  \Bigg[
   - \frac{(uv)^{g-1}(1+u)^{g}(1+v)^{g}}{(1-uv)^2(1+uv)}
    \Bigg(\frac{(uv)^{2N_1+1} (1+(uv)^{-2}x) }{1-(uv)^{-1}x} +
    \frac{(uv)^{2N_2} (1+(uv)^3x)}{1-(uv)^2x} \Bigg) \\
 & +
  \frac{(uv)^{g-1}(1+u)^{g}(1+v)^{g}}{(1-uv)^2}
   (uv)^{N_1+N_2} \frac{1 -(uv)x^{2}}{(1-(uv)^{-1}x)(1-(uv)^2x)} \\
    &+( (uv)^{2N_2}-(uv)^{2N_1} )
  \frac{(1+u^2v)^{g}(1+uv^2)^g-(uv)^{g}(1+u)^{g}(1+v)^g}{(1-uv)^2(1-(uv)^2)}
 \Bigg]\,.
\end{aligned}
$$
The result follows.
\end{proof}

Now we are ready to prove our first main result, Theorem
\ref{thm:main-1}.

\begin{theorem} \label{thm:main}
  Let $\s>\s_m$ be a non-critical value. Set
   $$
   n_0=\left[\frac{\s+d_1+d_2}{2}\right] \qquad \text{and}\qquad \hn= 2\left[\frac{n_0+1}2 \right]\,.
   $$
  Then the Hodge polynomial of $\cN_\s =\cN_\s(3,1,d_1,d_2)$ is
  $$
  \begin{aligned}
   e(\cN_\s) =&(1+u)^{2g}(1+v)^{2g}
   \coeff_{x^0}\frac{(1+ux)^{g}(1+vx)^{g}}{(1-x)(1-uvx)x^{d_1-d_2}}
   \cdot
   \\
    & \cdot \Bigg[
   \left(  \frac{(uv)^{2d_1-2d_2-2 n_0} x^{n_0}}{1-(uv)^{-2}x} -
   \frac{(uv)^{2g-2-2d_1+3n_0} x^{n_0}}{1-(uv)^{3}x} \right)
   \cdot  \frac{(1+u^2v)^{g}(1+uv^2)^g  -(uv)^{g}(1+u)^{g}(1+v)^{g}}
    {(1-uv)^2(1-(uv)^2)} \\ &  +
   \frac{(uv)^{g-1}(1+u)^{g}(1+v)^{g}}{(1-uv)^2(1+uv)}
    \Bigg(
  \frac{(uv)^{2d_1-2d_2-2\hn+1} x^\hn}{(1-(uv)^{-2}x)(1-(uv)^{-1}x)}  \\
    &
    \qquad \qquad + \frac{(uv)^{2g-2-2d_1+3\hn} x^{\hn} }{(1-(uv)^{3}x)(1-(uv)^2x)}
    - \frac{(1+uv)(uv)^{g-1-d_2+\hn/2} x^{\hn}}{(1-(uv)^2x)(1-(uv)^{-1}x)}
    \Bigg) \Bigg]\, .
  \end{aligned}
  $$
\end{theorem}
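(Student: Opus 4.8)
The plan is to build $e(\cN_\s)$ out of the flip contributions $C_n$ already computed in Propositions \ref{prop:Cnodd} and \ref{prop:Cneven}, by summing them across the critical values lying above $\s$. Since $n_1=3\neq 1=n_2$, the moduli space $\cN_{\s_M^+}$ is empty (the remark following Definition \ref{def:flip-loci}), so at the top critical value $n=d_1-d_2$, where $\s_n=\s_M$, one has $C_{d_1-d_2}=-e(\cN_{\s_M^-})$. Writing $C_n=e(\cN_{\s_n^+})-e(\cN_{\s_n^-})$ as in (\ref{eqn:Cn}) and using that $\cN_\s$ is constant on each chamber between consecutive critical values (Proposition \ref{prop:critical-values}), the relation telescopes: for $\s$ in the chamber determined by $n_0=[\frac{\s+d_1+d_2}{2}]$, one obtains
$$e(\cN_\s)=-\sum_{n>n_0} C_n,$$
the sum running over the critical integers $n$ with $\frac{2}{3}d_1<n\le d_1-d_2$ that exceed $n_0$; every such $n$ is a genuine critical value, so Propositions \ref{prop:Cnodd} and \ref{prop:Cneven} apply term by term.

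Next I would substitute the explicit expressions, writing each $C_n=C_n^{(0)}+\epsilon_n C_n^{(1)}$, where $C_n^{(0)}$ is the part common to the odd and even cases (all of Proposition \ref{prop:Cnodd}, equal to the first line of Proposition \ref{prop:Cneven}), $C_n^{(1)}$ is the extra even contribution, and $\epsilon_n=1$ exactly when $n$ is even. Recalling $N_1=d_1-d_2-n$ and $N_2=g-1-d_1+\tfrac{3}{2}n$, I rewrite $x^{-(d_1-d_2-n)}=x^{n}\,x^{-(d_1-d_2)}$ and pull $x^{-(d_1-d_2)}$ outside the $\coeff_{x^0}$. The common part then becomes a geometric series in $(uv)^{-2}x$ and in $(uv)^{3}x$, since $(uv)^{2N_1}x^{n}=(uv)^{2(d_1-d_2)}\big((uv)^{-2}x\big)^{n}$ and $(uv)^{2N_2}x^{n}=(uv)^{2g-2-2d_1}\big((uv)^{3}x\big)^{n}$. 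Because $\coeff_{x^0}\big[x^{-(d_1-d_2)}(\cdots)\big]$ annihilates every monomial of $x$-degree exceeding $d_1-d_2$, the upper cutoff may be pushed to infinity and each geometric series collapses to a single rational function; this reproduces the first bracket of the theorem, with the two terms $\tfrac{(uv)^{2d_1-2d_2-2n_0}x^{n_0}}{1-(uv)^{-2}x}$ and $\tfrac{(uv)^{2g-2-2d_1+3n_0}x^{n_0}}{1-(uv)^{3}x}$.

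The even correction $-\sum_{n>n_0,\,n\ \mathrm{even}}C_n^{(1)}$ is where the main work lies, and is the step I expect to be hardest. Here the index runs over even $n$ only, so the relevant series have step two and produce the quadratic denominators $1-(uv)^{-4}x^2$, $1-(uv)^{6}x^2$ and $1-(uv)x^2$. The key point is that the numerators $1+(uv)^{-2}x$, $1+(uv)^{3}x$ and $1-(uv)x^2$ appearing in Proposition \ref{prop:Cneven} are precisely the factors needed so that, using $1-(uv)^{-4}x^2=(1-(uv)^{-2}x)(1+(uv)^{-2}x)$ and $1-(uv)^{6}x^2=(1-(uv)^{3}x)(1+(uv)^{3}x)$, the quadratic denominators split and cancel against the numerators, leaving exactly the products $(1-(uv)^{-2}x)(1-(uv)^{-1}x)$, $(1-(uv)^{3}x)(1-(uv)^{2}x)$ and $(1-(uv)^{2}x)(1-(uv)^{-1}x)$ of the second bracket. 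The starting exponent of this step-two series is $x^{\hn}$ with $\hn=2[\tfrac{n_0+1}{2}]$, the parity-corrected lower limit; assembling this with the common part gives the three terms of the second bracket.

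Finally I would collect the two brackets, check that the prefactors $(1+u)^{2g}(1+v)^{2g}$, the factor $\tfrac{(1+u^2v)^g(1+uv^2)^g-(uv)^g(1+u)^g(1+v)^g}{(1-uv)^2(1-(uv)^2)}$ and $\tfrac{(uv)^{g-1}(1+u)^g(1+v)^g}{(1-uv)^2(1+uv)}$ come out as stated, and conclude; the Poincar\'e polynomial then follows by setting $u=v=t$. Beyond the factoring cancellation above, the genuinely delicate point is the index bookkeeping: reconciling the lower limit $n>n_0$ of the telescoped sum with the exponent $x^{n_0}$ produced by the closed-form geometric series, tracking the parity-dependent start $\hn$ of the even series, and checking that the resulting boundary terms near $n=n_0$ are distributed correctly between the two brackets. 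Everything else is routine manipulation of the $\coeff_{x^0}$ generating functions.
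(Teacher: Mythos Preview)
Your proposal is correct and follows essentially the same route as the paper: telescope $e(\cN_\s)=-\sum C_n$ over the critical values above $\s$, plug in Propositions \ref{prop:Cnodd} and \ref{prop:Cneven}, sum the resulting geometric series (step one for the part common to all $n$, step two for the even correction starting at $\hn$), and simplify via the factorizations $1-(uv)^{-4}x^2=(1-(uv)^{-2}x)(1+(uv)^{-2}x)$, $1-(uv)^{6}x^2=(1-(uv)^{3}x)(1+(uv)^{3}x)$ that cancel the numerators from Proposition \ref{prop:Cneven}. The paper organizes the split as ``odd $n$'' plus ``even $n$'' rather than your ``common part'' plus ``even correction'', and writes the lower index as $n\ge n_0$ rather than $n>n_0$; this is exactly the bookkeeping discrepancy you already flagged, and resolving it is indeed the only delicate point.
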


\begin{proof}
 We have the following telescopic sum
  $$
   e(\cN_\s) = \sum_{\s_c>\s} (e(\cN_{\scm}) -e(\cN_{\scp})) =
   \sum_{n\geq n_0} (- C_n)\, ,
  $$
  since $\s_c=2n-d_1-d_2> \s$ is equivalent to $n>\frac{\s+d_1+d_2}{2}$, i.e.
  $n\geq n_0$. Note, incidentally, that $\s$ is non-critical is equivalent to $\frac{\s+d_1+d_2}{2}$
  not being an integer. Using Propositions \ref{prop:Cnodd} and
  \ref{prop:Cneven}, we get
  $$
  \begin{aligned}
   e(\cN_\s) =&   \sum_{n\geq n_0} (- C_n)= \sum_{n\geq n_0, n \text{ odd}} (- C_n) +\sum_{n\geq n_0, n \text{ even}}
   (- C_n) \\
   = & (1+u)^{2g}(1+v)^{2g}
   \coeff_{x^0}\frac{(1+ux)^{g}(1+vx)^{g}}{(1-x)(1-uvx)x^{d_1-d_2}}
   \cdot
   \\
    & \Bigg[
   \sum_{n\geq n_0} x^n
   \left( (uv)^{2N_1}-(uv)^{2N_2} \right)
   \cdot  \frac{(1+u^2v)^{g}(1+uv^2)^g  -(uv)^{g}(1+u)^{g}(1+v)^{g}}
    {(1-uv)^2(1-(uv)^2)} + \\ & \,  +
   \sum_{n\geq n_0, n \text{ even} }  x^n
   \frac{(uv)^{g-1}(1+u)^{g}(1+v)^{g}}{(1-uv)^2(1+uv)}
    \Bigg(\frac{(uv)^{2N_1+1} (1+(uv)^{-2}x) }{1-(uv)^{-1}x} +
    \frac{(uv)^{2N_2} (1+(uv)^3x)}{1-(uv)^2x} \\
 & \qquad \qquad\qquad \qquad - \frac{(uv)^{N_1+N_2} (1+uv)(1 -(uv)x^{2})}{(1-(uv)^{-1}x)(1-(uv)^2x)}\Bigg)
 \Bigg]\,.
  \end{aligned}
   $$
    Recall that $N_1=d_1-d_2-n$, $N_2=g-1+3n/2-d_1$. Also, note that $\hn$ is the
    first even number greater than or equal to $n_0$. We
    substitute
  $$
  \begin{aligned}
     \sum_{n\geq n_0}x^n ((uv)^{2N_1}-(uv)^{2N_2}) &
     = \sum \Big( x^n (uv)^{2d_1-2d_2-2n}- x^n(uv)^{2g-2-2d_1+3n}\Big)  \\ & =
 \frac{(uv)^{2d_1-2d_2-2 n_0} x^{n_0}}{1-(uv)^{-2}x} -\frac{(uv)^{2g-2-2d_1+3 n_0}
 x^{n_0}}{1-(uv)^{3}x}\, , \\
 \sum_{n\geq n_0, n \text{ even} } x^n (uv)^{2N_1+1} &= \sum x^n (uv)^{2d_1-2d_2-2n+1} =
     \frac{(uv)^{2d_1-2d_2-2\hn+1} x^{\hn}}{1-(uv)^{-4}x^2} \, ,\\
    \sum_{n\geq n_0, n \text{ even} } x^n (uv)^{2N_2} &= \sum x^n (uv)^{2g-2-2d_1+3n} =
     \frac{(uv)^{2g-2-2d_1+3\hn} x^{\hn}}{1-(uv)^{6}x^2} \, ,\\
   \sum_{n\geq n_0, n \text{ even} } x^n (uv)^{N_1+N_2} &=\sum x^n (uv)^{g-1-d_2+n/2} =
     \frac{(uv)^{g-1-d_2+\hn/2} x^{\hn}}{1- uv x^2}\, ,
 \end{aligned}
   $$
   in the formula above, to get
  $$
  \begin{aligned}
   e(\cN_\s) =&(1+u)^{2g}(1+v)^{2g}
   \coeff_{x^0}\frac{(1+ux)^{g}(1+vx)^{g}}{(1-x)(1-uvx)x^{d_1-d_2}}
   \cdot
   \\
    & \Bigg[
   \left( \frac{(uv)^{2d_1-2d_2-2 n_0} x^{n_0}}{1-(uv)^{-2}x}  -
   \frac{(uv)^{2g-2-2d_1+3n_0} x^{n_0}}{1-(uv)^{3}x}
   \right)
   \cdot  \frac{(1+u^2v)^{g}(1+uv^2)^g  -(uv)^{g}(1+u)^{g}(1+v)^{g}}
    {(1-uv)^2(1-(uv)^2)} \\ &  +
   \frac{(uv)^{g-1}(1+u)^{g}(1+v)^{g}}{(1-uv)^2(1+uv)}
    \Bigg(
  \frac{(uv)^{2d_1-2d_2-2\hn+1} x^\hn (1+(uv)^{-2}x)}{(1-(uv)^{-4}x^2)(1-(uv)^{-1}x)}  \\
    &
    \qquad + \frac{(uv)^{2g-2-2d_1+3\hn} x^{\hn} (1+(uv)^3x)}{(1-(uv)^{6}x^2)(1-(uv)^2x)}
    - \frac{(1+uv)(uv)^{g-1-d_2+\hn/2} x^{\hn}}{(1-(uv)^2x)(1-(uv)^{-1}x)}
    \Bigg) \Bigg] \, .
  \end{aligned}
   $$
Simplifying we get the required result. 
\end{proof}

%
%

\section{Hodge polynomial of the moduli space of rank $3$
stable bundles} \label{sec:rank3stable}

Now we want to use Proposition \ref{prop:moduli-small} to compute
the Hodge polynomial for the moduli space $M(3,1)$. Note that
$M(3,1)\cong M(3,-1)$, via $E\mapsto E^*$. Also $M(3,d)\cong
M(3,d+3k)$, for any $k\in\ZZ$, by twisting with a fixed line
bundle of degree $k$. Therefore all $M(3,d)$, with $d\not\equiv
0\pmod 3$ are isomorphic to each other.

Here we apply our previously gathered knowledge about
$\cN_\s(3,1,d_1,d_2)$ to find the Hodge polynomial of $M(3,d)$,
thus proving Theorem \ref{thm:main-2}.

\begin{theorem}\label{thm:main2}
Assume that $d\not\equiv 0 \pmod 3$. Then the Hodge polynomial of
$M(3,d)$ is
  $$
  \begin{aligned}
   e(M(3,d))
    =& \frac{(1+u)^{2g}(1+v)^{2g}}{(1-uv)(1-(uv)^2)^2(1-(uv)^3)}
   \Big( {(1+u)^{g}(1+v)^{g}(1+uv)^2(uv)^{2g-1}
   (1+u^2v)^g(1+uv^2)^g}\\
   & -(1+u)^{2g}(1+v)^{2g} (uv)^{3g-1}(1+uv+u^2v^2)
   +(1+u^2v^3)^g(1+u^3v^2)^g(1+u^2v)^g(1+uv^2)^g
   \Big)\, .
  \end{aligned}
   $$
\end{theorem}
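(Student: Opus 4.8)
The plan is to use the forgetful map of Proposition~\ref{prop:moduli-small} at the smallest value $\s=\smp$ of the parameter, which exhibits $\cN_{\smp}(3,1,d_1,d_2)$ as a projective bundle over $M(3,d_1)\x M(1,d_2)=M(3,d_1)\x\Jac^{d_2}X$, and then to solve for $e(M(3,d_1))$. I would fix $d_1=d$ (so $\GCD(3,d)=1$, since $d\not\equiv0\pmod3$) and choose $d_2$ negative enough that $\mu_1-\mu_2=\frac{d}{3}-d_2>2g-2$; then $\GCD(1,d_2)=1$ and Proposition~\ref{prop:moduli-small} applies. Since a projective bundle is Zariski locally trivial, property~(5) of Section~\ref{sec:virtual} gives
$$
e(\cN_{\smp})=e(\PP^{N})\,e(M(3,d))\,e(\Jac^{d_2}X)=\frac{1-(uv)^{N+1}}{1-uv}\,(1+u)^{g}(1+v)^{g}\,e(M(3,d)),
$$
where $N=d-3d_2-3g+2$ is the fibre dimension. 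The theorem follows once $e(\cN_{\smp})$ is computed and this relation is inverted.

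To compute $e(\cN_{\smp})$ I would feed $\s=\smp=\s_m+\epsilon$ into Theorem~\ref{thm:main}: at this value $\frac{\s+d_1+d_2}{2}=\frac{2d}{3}+\frac{\epsilon}{2}$, so $\smp$ is non-critical (as $\frac{2d}{3}\notin\ZZ$) and the integers $n_0,\hn$ take explicit values depending only on $d\bmod 3$. Substituting them presents $e(\cN_{\smp})$ as the $\coeff_{x^0}$ of an explicit rational expression, and the core step is to carry out this extraction by partial fractions. Because $d_2$ is very negative, the exponents $d-d_2$, $d-d_2-n_0$ and $d-d_2-\hn$ are large, so each summand $\coeff_{x^0}\frac{(1+ux)^{g}(1+vx)^{g}}{(1-x)(1-uvx)x^{d-d_2}}\,R(x)$ (with $R$ a rational factor of Theorem~\ref{thm:main}) is an \emph{exact} finite sum of residues once the exponent exceeds $2g-2$. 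Writing $A(x)=(1+ux)^{g}(1+vx)^{g}$ and decomposing over the simple poles coming from $1-x$, $1-uvx$, $1-(uv)^{-1}x$, $1-(uv)^{2}x$, $1-(uv)^{-2}x$ and $1-(uv)^{3}x$, the needed evaluations
$$
A(1)=(1+u)^{g}(1+v)^{g},\qquad A(uv)=(1+u^2v)^{g}(1+uv^2)^{g},\qquad A((uv)^2)=(1+u^3v^2)^{g}(1+u^2v^3)^{g}
$$
(together with the reciprocal evaluations, which reproduce these same three blocks up to explicit powers of $uv$) combine with the constant factors already present in Theorem~\ref{thm:main}, such as $(1+u^2v)^{g}(1+uv^2)^{g}$, to generate the four-fold products like $(1+u^2v^3)^{g}(1+u^3v^2)^{g}(1+u^2v)^{g}(1+uv^2)^{g}$ appearing in the statement.

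The main obstacle is the bookkeeping in inverting the relation of the first paragraph. The residue sum expresses $e(\cN_{\smp})$ as a combination of terms $c_i\,(uv)^{k_i(d-d_2)}$ together with the $(uv)^{-d_2}$-powers carried by the coefficients of Theorem~\ref{thm:main}; one must verify that all of this organizes into the shape $\frac{1-(uv)^{N+1}}{1-uv}(1+u)^{g}(1+v)^{g}\cdot(\text{a }d_2\text{-free factor})$, i.e. that every spurious power of $uv$ cancels. The surviving $d_2$-free factor is then $e(M(3,d))$, and collecting the three residue families over the common denominator $(1-uv)(1-(uv)^2)^2(1-(uv)^3)$ yields the stated formula. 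A useful internal check is that the answer must be independent of the chosen $d_2$ and of $d$ (which enters only through $n_0,\hn$ modulo $3$), consistent with the isomorphisms $M(3,d)\cong M(3,d+3k)\cong M(3,-d)$; in particular the cases $d\equiv1$ and $d\equiv2\pmod3$ must agree. Finally, the Poincar\'e polynomial follows by setting $u=v=t$ in~(\ref{eqn:Poinca}).
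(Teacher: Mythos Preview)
Your strategy is exactly the paper's: use Proposition~\ref{prop:moduli-small} to write $e(\cN_{\smp})=e(\PP^N)\,e(\Jac X)\,e(M(3,d_1))$, compute $e(\cN_{\smp})$ from Theorem~\ref{thm:main}, and divide. The only substantive difference is in the choice of parameters. Rather than keeping $d_1=d$ arbitrary and sending $d_2\to-\infty$, the paper fixes once and for all $d_2=0$ and $d_1=6g-5$ (so $d_1\equiv1\pmod3$ and $\mu_1-\mu_2=\tfrac{6g-5}{3}>2g-2$). This pins down $n_0=4g-3$, $\hn=4g-2$, $d_1-d_2-n_0=2g-2$ and the fibre dimension $3g-3$, turning the $\coeff_{x^0}$ extraction into a single concrete residue computation instead of a $d_2$-dependent family. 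The paper then packages those residues as auxiliary functions $F_1(a,b,c)$ and $F_2(a,b,c,d)$ and evaluates them by the global residue theorem exactly as you outline, with the same special values $A(1)$, $A(uv)$, $A((uv)^2)$. The ``cancellation of spurious powers of $uv$'' that you flag as the main obstacle simply never arises in the paper's version: with fixed numerics one divides by $(1+u)^g(1+v)^g\,\frac{1-(uv)^{3g-2}}{1-uv}$ directly and simplifies. Your route is correct but strictly more laborious; choosing one convenient $(d_1,d_2)$ at the outset, as the paper does, is the cleaner move and makes your consistency check (independence of $d_2$ and of $d\bmod3$) unnecessary. One small point: the multiplicativity you invoke is item~(6) of Section~\ref{sec:virtual} (projective bundles), not~(5); the bundle here happens to be Zariski locally trivial because $\gcd(3,d_1)=1$ forces a universal family, but~(6) is the citation that requires no justification.
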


\begin{proof}
We choose $d_2=0$, $d_1=6g-5$. By Proposition
\ref{prop:moduli-small},
 $$
 \cN_{\s_m^+}=\cN_{\s_m^+}(3,1,d_1,d_2) \to \Jac^{d_2} X\times M(3,d_1)
 $$
is a projective bundle with projective fibers of dimension
$d_1-3(g-1)-1=3g-3$. Then
 $$
  e(\cN_{\s_m^+} ) = e(M(3,d_1))\,  (1+u)^g(1+v)^g
  \frac{1-(uv)^{3g-2}}{1-uv}\, .
 $$

To compute $e(\cN_{\s_m^+})$, apply Theorem \ref{thm:main}, with
$\s=\s_m^+=\frac{d_1}3 +\epsilon$ ($\epsilon>0$ small), so
$n_0=[\frac{\sigma+d_1+d_2}2] = [ \frac23 d_1]+1=4g-3$,
$\hn=4g-2$, and $d_1-d_2-n_0=2g-2$, to get
   $$
  \begin{aligned}
   e(\cN_{\s_m^+}) =&(1+u)^{2g}(1+v)^{2g}
   \coeff_{x^0}\frac{(1+ux)^{g}(1+vx)^{g}}{(1-x)(1-uvx)x^{2g-2}}
   \cdot
   \\
    & \qquad \cdot \Bigg[
   \left(
   \frac{(uv)^{4g-4}}{1-(uv)^{-2}x} - \frac{(uv)^{2g-1}}{1-(uv)^{3}x}  \right)
   \cdot  \frac{(1+u^2v)^{g}(1+uv^2)^g  -(uv)^{g}(1+u)^{g}(1+v)^{g}}
    {(1-uv)^2(1-(uv)^2)} \Bigg] \\ &
     + (1+u)^{2g}(1+v)^{2g}
   \coeff_{x^0}\frac{(1+ux)^{g}(1+vx)^{g}}{(1-x)(1-uvx)x^{2g-1}}
   \cdot
   \\
  & \qquad \cdot \Bigg[\frac{(uv)^{g-1}(1+u)^{g}(1+v)^{g}}{(1-uv)^2(1+uv)}
    \Bigg(
  \frac{(uv)^{4g-5} }{(1-(uv)^{-2}x)(1-(uv)^{-1}x)}  \\
    &
    \qquad \qquad + \frac{(uv)^{2g+2}  }{(1-(uv)^{3}x)(1-(uv)^2x)}
    - \frac{(1+uv)(uv)^{3g-2} }{(1-(uv)^2x)(1-(uv)^{-1}x)}
    \Bigg) \Bigg]\, .
  \end{aligned}
   $$
Introducing the notation
   $$
  \begin{aligned}
   F_1(a,b,c) &= \Res_{x=0} \frac{(1+ux)^{g}(1+vx)^g}{(1-ax)(1-bx)(1-cx)x^{2g-1}}
  \, ,\\
  F_2(a,b,c,d) &= \Res_{x=0}
  \frac{(1+ux)^{g}(1+vx)^g}{(1-ax)(1-bx)(1-cx)(1-dx)x^{2g-2}}\, ,
  \end{aligned}
   $$
we write
   \begin{equation}\label{eqn:F3}
  \begin{aligned}
   e(\cN_{\s_m^+}) =& (1+u)^{2g}(1+v)^{2g}
   \Bigg[
   \Big(   {(uv)^{4g-4}} F_1(1,uv,{(uv)^{-2}}) - (uv)^{2g-1} F_1( 1,uv ,{(uv)^{3}}) \Big) \cdot \\ &
   \qquad  \cdot  \frac{(1+u^2v)^{g}(1+uv^2)^g  -(uv)^{g}(1+u)^{g}(1+v)^{g}}
    {(1-uv)^2(1-(uv)^2)} \\ &  +
   \frac{(uv)^{g-1}(1+u)^{g}(1+v)^{g}}{(1-uv)^2(1+uv)}
    \Big(
  {(uv)^{4g-5} } F_2(1,uv, (uv)^{-2},(uv)^{-1})  \\
    &
    \qquad + {(uv)^{2g+2}  }{F_2(1,uv, (uv)^{3}, (uv)^2)}
    - (1+uv)(uv)^{3g-2} {F_2(1,uv, (uv)^2,(uv)^{-1})}
    \Big) \Bigg] \,.
  \end{aligned}
  \end{equation}

In the proof of \cite[Proposition 8.1]{MOV1}, we computed
 \begin{equation}\label{eqn:F1abc}
 F_1(a,b,c)=
    \frac{(a+u)^{g}(a+v)^{g}}{(a-b)(a-c)}+
    \frac{(b+u)^{g}(b+v)^{g}}{(b-a)(b-c)}+
    \frac{(c+u)^{g}(c+v)^{g}}{(c-a)(c-b)}\, .
 \end{equation}
Also, the function
 $$
  G(x)=\frac{(1+ux)^{g}(1+vx)^g}{(1-ax)(1-bx)(1-cx)(1-dx)x^{2g-2}}\,
  ,
  $$
is meromorphic on $\CC\cup \{\infty\}$ with poles at $x=0$,
$x=1/a$, $x=1/b$, $x=1/c$, $x=1/d$, and no pole at $\infty$. So
  $$
  F_2(a,b,c,d)=-\Res_{x=1/a} G(x) -\Res_{x=1/b} G(x) -\Res_{x=1/c}
  G(x)-\Res_{x=1/d} G(x)\, ,
  $$
from where
   \begin{equation}\label{eqn:F2abcd}
    F_2(a,b,c,d)=  \frac{(a+u)^{g}(a+v)^{g}}{(a-b)(a-c)(a-d)}+
    \frac{(b+u)^{g}(b+v)^{g}}{(b-a)(b-c)(b-d)}+
    \frac{(c+u)^{g}(c+v)^{g}}{(c-a)(c-b)(c-d)}+
    \frac{(d+u)^{g}(d+v)^{g}}{(d-a)(d-b)(d-d)}\, .
  \end{equation}

Using (\ref{eqn:F1abc}) and (\ref{eqn:F2abcd}) into
(\ref{eqn:F3}), we get
  $$
  \begin{aligned}
   e(M(3,d_1)) =& \frac{e(\cN_{\s_m^+}) (1-uv)}{(1+u)^g(1+v)^g (1-(uv)^{3g-2})} \\
    =& \frac{ (1+u)^{g}(1+v)^{g}}{(1-uv)(1-(uv)^2)^2(1-(uv)^3)}
   \Big( -(1+u)^{g}(1+v)^{g}(1+uv)^2(uv)^{2g-1}
   (1+u^2v)^g(1+uv^2)^g \\
   & +(1+u)^{2g}(1+v)^{2g} (uv)^{3g-1}(1+uv+u^2v^2)
   +(1+u^2v^3)^g(1+u^3v^2)^g(1+u^2v)^g(1+uv^2)^g
   \Big)\, .
  \end{aligned}
   $$
\end{proof}

\end{document}